\newcommand{\re}{\mathbb{R}}
\newcommand{\zz}{\mathbb{Z}}
\newcommand{\gf}{\mathbb{F}}
\newcommand{\gft}{\gf_2}
\newcommand{\evec}{\mathbf{e}}
\DeclareMathOperator{\degr}{deg}
\DeclareMathOperator{\Cayley}{Cayley}
\newtheorem{definition}{Definition}
\newtheorem{theorem}{Theorem}
\newtheorem{lemma}[theorem]{Lemma}
\newtheorem{corollary}[theorem]{Corollary}
\newtheorem{proposition}[theorem]{Proposition}
\newtheorem{example}{Example}
\newtheorem{conjecture}{Conjecture}
\newcommand{\ones}{\mathbf{1}}
\begin{document}

\title{Eigenvalues and Critical Groups of Adinkras}

\author{Kevin Iga\address{Natural Sciences Division,  Pepperdine University}\email{kevin.iga@pepperdine.edu}}
\author{Caroline Klivans\address{Division of Applied Mathematics,  Brown University}\email{klivans@brown.edu}}
\author{Jordan Kostiuk\address{Mathematics Department, Brown University }  \email{jordan\_kostiuk@brown.edu}}
\author{Chi Ho Yuen\address{Department of Mathematics, University of Oslo} \email{chihy@math.uio.no}}

\date{January 2022}

\maketitle
\begin{abstract}
Adinkras are signed graphs used to study supersymmetry in physics. We provide an introduction to these objects, and study the properties of their signed adjacency and signed Laplacian matrices. These matrices each have exactly two distinct eigenvalues (of equal multiplicity), making Adinkras closely related to the notions of strongly regular graphs.  We also study the critical groups of Adinkras, and in particular determine their odd components. A novel technique of independent interest is used which considers critical groups over  polynomial rings.
\end{abstract}

\tableofcontents

\section{Introduction}
\label{sec:intro}
Given an undirected simple graph, the \emph{discrete, or graph Laplacian} is an associated real symmetric matrix, inspired by the more famous analytic Laplacian on manifolds.  The graph Laplacian has been of great interest in areas such as numerical analysis, combinatorics, and clustering \cites{patodi1996riemannian, friedman1998computing, spielman1996spectral}.  
Being a real symmetric matrix, the graph Laplacian necessarily has real eigenvalues. 
The corresponding spectrum--the collection of eigenvalues with multiplicities--captures many interesting features of the graph.  For a survey, see \cite{Mohar}.

The adjacency matrix for a graph is also an integer symmetric matrix, and is related to the graph Laplacian, being the negative of the off-diagonal part of the graph Laplacian.  This adjacency matrix likewise has a spectrum which relates to various properties of the graph \cite{BelCioKooWang}.  One example is that if there are only two distinct eigenvalues of the adjacency matrix of a connected graph, then the graph must be a complete graph \cite{CvetRowlSim}.

As an integer matrix, the graph Laplacian also maps an integer lattice onto itself, and has a cokernel in this lattice.  This cokernel (or its torsion part) is called the critical group, sandpile group, Jacobian, or Picard group, depending on the context.  There is a large literature on investigating these groups for various classes of graphs and extensions beyond simple graphs, see e.g. \cites{RT_Signed, BKR_ChipFiring, criticalsimplicial, sandpiletree, criticalrep, Bai_hypercube}.   The critical group is a finite abelian group and understanding its structure, i.e. its invariant factors or, equivalently, its Smith Normal Form has also proven to be a very hard problem in general.  Much work has been done on the case of hypercubes and their generalizations, see e.g. \cites{Bai_hypercube, CSX_hypercube, AP_hypercube, jacobson2003critical}.

Signed graphs, which are graphs together with a labeling of each edge with either $+1$ or $-1$, were introduced by F. Harary in 1953, \cite{Harary} with further development by T. Zaslavsky \cites{zaslavsky,zaslavsky2}.  There is an analogous signed adjacency and signed Laplacian for signed graphs, and situations where there are only two eigenvalues have been of interest in recent years.\cites{GhasFath,BelCioKooWang,Ramezani,HouTangWang,Stanic2,Stanic3}  Indeed, signed graphs with two eigenvalues relates to the recent celebrated proof of the Sensitivity Conjecture \cite{Huang}.

In 2004, theoretical physicists M. Faux and S. J. Gates developed a computational tool, called an {\em Adinkra}, to help classify supersymmetric multiplets \cite{rA}.  An Adinkra is a graph with some additional structure, satisfying various conditions. As ordinary graphs, they are quotients of hypercubes by a doubly even code \cites{r6-codes,at0,doranCodesSupersymmetryOne2011,doranApplicationCubicalCohomology2017}. One of the additional structures of an Adinkra is that it is a signed graph.

In this paper, we introduce Adinkras and demonstrate that Adinkras are examples of strongly regular signed graphs \cites{Ramezani2,Stanic3,zaslavsky2}, and in particular, both the signed adjacency and signed Laplacian matrices each have precisely two eigenvalues. They therefore provide a large number of examples of signed graphs with this property.

Similar to studying signed Laplacians, the extension of critical groups to signed graphs is natural, and for Adinkras, there is a lot that can be determined.  We compute the odd component of the critical groups of any Adinkras, and completely determine the group for a large class of them. A key, novel strategy is to use the chromatic information of Adinkras to lift their Laplacians over polynomial rings, and consider the parallel problems there.

We begin with a review of signed graphs, as well as their signed adjacency and signed Laplacian matrices in Section~\ref{sec:signedgraph}.  We review Adinkras in Section~\ref{sec:adinkra}, and include a discussion on doubly even codes because all Adinkras can be constructed from them, using a quotient construction and disjoint unions.

In Section~\ref{sec:eigadinkra}, we show that Adinkras are examples of a number of phenomena in the literature, such as having two distinct eigenvalues (for both the signed adjacency and the signed Laplacian), and for being a strongly regular signed graph in several related senses.

Section~\ref{sec:criticalgroup} includes the invariant factors for small Adinkras $(N \leq 8, k \leq 4)$ organized via a classification in terms of $(N,k)$ doubly even codes.  Theorems~\ref{thm:inv_L}-\ref{thm:prism} prove properties of invariant factors for general Adinkras and determine the invariant factors in special cases.  

Next, in Section~\ref{sec:oddprime} we introduce formal variables $x_1,\ldots,x_N$ corresponding to the various colors of edges, and this gives us the invariant factors modulo odd primes.  As a side benefit, we prove a result about unsigned quotients of cubes, relating for example to the works \cites{jacobson2003critical, Bai_hypercube, GKM_Cayley} and may be of independent interest.

\section{Signed Graphs}
\label{sec:signedgraph}
We begin with a definition:
\begin{definition}
Given an undirected simple graph $G = (V,E)$, a \emph{signature} on $G$ is a map
\[\mu\colon E\to \{-1,1\}.\]
A \emph{signed graph} is a graph together with a signature.
\end{definition}

Signed graphs were introduced by F. Harary in 1953,\cite{Harary} and developed further by T. Zaslavsky  in \cite{zaslavsky,zaslavsky2}.

We will draw edges $e$ with $\mu(e)=1$ as solid edges, and edges with $\mu(e)=-1$ as dashed edges and use the terms \emph{solid edges} and \emph{dashed edges}, according to whether or not $\mu$ takes the value $1$ or $-1$ there.

\begin{example}
For instance, here is a signed graph:
\begin{center}
\begin{tikzpicture}
\GraphInit[vstyle=Welsh]
\SetVertexNormal[MinSize=5pt]
\SetUpEdge[labelstyle={draw},style={ultra thick}]
\tikzset{Dash/.style={dashed,draw,ultra thick}}
\Vertex[x=0,y=0,Math,L={A},Lpos=180]{A}
\Vertex[x=-1,y=1,Math,L={B},Lpos=90]{B}
\Vertex[x=2,y=.5,Math,L={C},Lpos=0]{C}
\Vertex[x=.2,y=-1.4,Math,L={D},Lpos=180]{D}
\Vertex[x=-2,y=-.5,Math,L={E},Lpos=180]{E}
\Vertex[x=2.2,y=-1.8,Math,L={F},Lpos=0]{F}
\AddVertexColor{black}{A,B,C,D,E,F}
\Edge(A)(B)
\Edge(A)(C)
\Edge[style=Dash](A)(D)
\Edge[style=Dash](B)(E)
\Edge(C)(F)
\Edge(D)(F)
\end{tikzpicture}
\end{center}
Edges $AB$, $AC$, $CF$, and $DF$ are drawn solid to indicate that $\mu(AB)=\mu(AC)=\mu(CF)=\mu(DF)=1$.  Edges $BE$ and $AD$ are drawn dashed to indicate that $\mu(BE)=\mu(AD)=-1$.
\label{ex:signedgraph}
\end{example}

\subsection{The Signed Adjacency and Signed Laplacian Operators}

Given a finite signed graph and an ordering of its vertices, the signed adjacency operator is a symmetric integer-valued $\#V\times\#V$ matrix whose entries are:
\[A_{ij}=\begin{cases}
1,&\mbox{if there is a solid edge $e$ between vertices $v_i$ and $v_j$,}\\
-1,&\mbox{if there is a dashed edge $e$ between vertices $v_i$ and $v_j$, and}\\
0,&\mbox{if there is no such edge.}
\end{cases}
\]
Compare this with the (unsigned) adjacency matrix for an (unsigned) graph, which has its $(i,j)$ entry equal to $1$ whenever there is an edge $e$ between $v_i$ and $v_j$, and $0$ otherwise.  This is a special case of our definition here, where we take the signature as $+1$ for all edges.

Likewise, the signed Laplacian is also a symmetric integer-valued $\#V\times\#V$ matrix with entries:
\[L_{ij}=\begin{cases}
\degr(v_i),&\mbox{if $i=j$,}\\
-1,&\mbox{if $i\not=j$ and there is a solid edge $e$ between vertices $v_i$ and $v_j$,}\\
1,&\mbox{if $i\not=j$ and there is a dashed edge $e$ between vertices $v_i$ and $v_j$, and}\\
0,&\mbox{otherwise.}
\end{cases}
\]
We can write
\[L=D-A\]
where $D$ is a diagonal matrix whose $i$th diagonal entry is the degree of $v_i$.

If we take an unsigned graph, and use the signature of $+1$ for all edges, $L$ is then the (unsigned) graph Laplacian.

\begin{example}
Consider the signed graph from Example~\ref{ex:signedgraph}.

If we order the vertices as $(A,B,C,D,E,F)$, then
the signed adjacency matrix $A$ is
\[\begin{bmatrix}
    0&1&1&-1&0&0\\
    1&0&0&0&-1&0\\
    1&0&0&0&0&1\\
    -1&0&0&0&0&1\\
    0&-1&0&0&0&0\\
    0&0&1&1&0&0
\end{bmatrix}
\]
and the signed Laplacian matrix $L$ is
\[\begin{bmatrix}
    3&-1&-1&1&0&0\\
    -1&2&0&0&1&0\\
    -1&0&2&0&0&-1\\
    1&0&0&2&0&-1\\
    0&1&0&0&1&0\\
    0&0&-1&-1&0&2
\end{bmatrix}
\]

\end{example}

\section{Adinkras}
\label{sec:adinkra}
\emph{Adinkras} are bipartite signed graphs with additional ``decorations'', subject to various constraints, as described in this section.  They were introduced by S.~J. Gates and M. Faux to study supersymmetry in one (temporal) dimension \cite{rA}.

To help motivate Adinkras, we first discuss the supermultiplets and supersymmetry in one dimension (time).

\begin{definition}
A \emph{supermultiplet} consists of a finite number of functions of time called {\em bosons}, $b_1(t),\ldots,b_p(t)$, a finite number of functions of time called {\em fermions}, $f_1(t),\ldots,f_q(t)$, and $N\ge 1$ linear differential operators $Q_1,\ldots,Q_N$, so that for each $j$, $Q_ib_j$ is one of the following:
\begin{equation}
    Q_i(b_j)=\begin{cases}
    f_k\\
    -f_k\\
    \frac{d}{dt}f_k\\
    -\frac{d}{dt}f_k
    \end{cases}\label{eqn:qb}
\end{equation}
where $f_k$ is some fermion.\footnote{More generally, in the subject of supersymmetry, $Q_i(b_j)$ can be a linear combination of such terms, but the theory of Adinkras focuses on situations where there is only one such term.}  Likewise, for each $j$, $Q_if_j$ is one of the following:
\begin{equation}
    Q_i(f_j)=\begin{cases}
    \sqrt{-1}b_k\\
    -\sqrt{-1}b_k\\
    \sqrt{-1}\frac{d}{dt}b_k\\
    -\sqrt{-1}\frac{d}{dt}b_k
    \end{cases}\label{eqn:qf}
\end{equation}
The $Q_i$ are furthermore required to satisfy the following relations.  For all $i$,
\begin{equation}
Q_iQ_i=\sqrt{-1}\frac{d}{dt}
\label{eqn:susyalg1}
\end{equation}   
and whenever $i\not=j$,
\begin{equation}
 Q_iQ_j=-Q_jQ_i
 \label{eqn:susyalg2}
\end{equation}
\end{definition}

The abstract set of generators $Q_1,\ldots,Q_N$ satisfying these relations is called the \emph{extended super-Poincar\'e algebra in one dimension}.  Thus, a supermultiplet can be considered a kind of representation of this algebra.

By (\ref{eqn:susyalg1}), if
\begin{equation}
Q_i(b_j)=\pm \left(\frac{d}{dt}\right)^af_k,\label{eqn:bosontofermion}
\end{equation}
then
\begin{equation}
Q_i(f_k)=\pm\sqrt{-1}\left(\frac{d}{dt}\right)^{1-a}b_j.\label{eqn:fermiontoboson}
\end{equation}
Thus, for each $i$, $Q_i$ pairs each boson with a unique fermion.  It follows that the numbers of bosons and fermions are equal.  Furthermore, if one equation has a minus sign, then so does the other and precisely one of these equations has the time derivative.

An Adinkra with $N\ge 1$ colors is a graph that encodes these relationships of a supermultiplet.  The construction is reminiscent of Cayley graphs for finitely generated groups.  We draw vertices for bosons and fermions; we draw bosons as open circles and fermions as filled circles.  We choose $N$ colors for edges; one for each $Q_i$.  For each relation (\ref{eqn:bosontofermion}),
\[Q_i(b_j)=\pm\left(\frac{d}{dt}\right)^a f_k\]
we draw an edge from $b_j$ to $f_k$, colored using color $i$.  The relation (\ref{eqn:fermiontoboson}) shows that the same correspondence holds for fermions.

The edge is directed (i.e. drawn with an arrow) from $b_j$ to $f_k$ if $a=0$ and from $f_k$ to $ b_j$ if $a=1$, and drawn with a solid line if the $+$ sign is used, and a dashed line if the $-$ sign is used.

To summarize, here are the four possibilities:
\begin{center}
\renewcommand{\arraystretch}{1.7}
    \begin{tabular}{lll}
    $Q_i(b_j)=f_k$&
    \begin{tikzpicture}
    \GraphInit[vstyle=Welsh]
    \SetVertexNormal[MinSize=5pt]
    \SetUpEdge[labelstyle={draw},style={ultra thick,->}]
    \tikzset{Dash/.style={dashed,draw,ultra thick,->}}
    \Vertex[x=0,y=0,Math,L={b_j},Lpos=180]{B}
    \Vertex[x=2,y=0,Math,L={f_k},Lpos=0]{F}
    \AddVertexColor{black}{F}
    \Edge(B)(F);
    \end{tikzpicture}&
    $Q_i(f_k)=\sqrt{-1}\frac{d}{dt}b_j$\\
    $Q_i(b_j)=-f_k$&
    \begin{tikzpicture}
    \GraphInit[vstyle=Welsh]
    \SetVertexNormal[MinSize=5pt]
    \SetUpEdge[labelstyle={draw},style={ultra thick,->}]
    \tikzset{Dash/.style={dashed,draw,ultra thick,->}}
    \Vertex[x=0,y=0,Math,L={b_j},Lpos=180]{B}
    \Vertex[x=2,y=0,Math,L={f_k},Lpos=0]{F}
    \AddVertexColor{black}{F}
    \Edge[style=Dash](B)(F);
    \end{tikzpicture}&
        $Q_i(f_k)=-\sqrt{-1}\frac{d}{dt}b_j$\\
    $Q_i(b_j)=\frac{d}{dt}f_k$&
    \begin{tikzpicture}
    \GraphInit[vstyle=Welsh]
    \SetVertexNormal[MinSize=5pt]
    \SetUpEdge[labelstyle={draw},style={ultra thick,->}]
    \tikzset{Dash/.style={dashed,draw,ultra thick,->}}
    \Vertex[x=0,y=0,Math,L={b_j},Lpos=180]{B}
    \Vertex[x=2,y=0,Math,L={f_k},Lpos=0]{F}
    \AddVertexColor{black}{F}
    \Edge(F)(B);
    \end{tikzpicture}&
            $Q_i(f_k)=\sqrt{-1}b_j$\\
    $Q_i(b_j)=\frac{d}{dt}f_k$&
    \begin{tikzpicture}
    \GraphInit[vstyle=Welsh]
    \SetVertexNormal[MinSize=5pt]
    \SetUpEdge[labelstyle={draw},style={ultra thick,->}]
    \tikzset{Dash/.style={dashed,draw,ultra thick,->}}
    \Vertex[x=0,y=0,Math,L={b_j},Lpos=180]{B}
    \Vertex[x=2,y=0,Math,L={f_k},Lpos=0]{F}
    \AddVertexColor{black}{F}
    \Edge[style=Dash](F)(B);
    \end{tikzpicture}&
    $Q_i(f_k)=-\sqrt{-1}b_j$\\
    \end{tabular}
\end{center}
By this construction, each vertex has $N$ edges incident to it: one of each color.

To satisfy (\ref{eqn:susyalg2}), we consider two distinct colors $i$ and $j$.  In order for $Q_iQ_j(b_k)=-Q_jQ_i(b_k)$, we must have two bosons and two fermions in a $4$-cycle, like this (where we have used black for $Q_i$ and red for $Q_j$):

\begin{center}
    \begin{tikzpicture}
    \GraphInit[vstyle=Welsh]
    \SetVertexNormal[MinSize=5pt]
    \SetUpEdge[labelstyle={draw},style={ultra thick,->}]
    \tikzset{Dash/.style={dashed,draw,ultra thick,->}}
    \Vertex[x=0,y=0,Math,L={b_k},Lpos=180]{B1}
    \Vertex[x=2,y=0,Math,L={f_\ell},Lpos=0]{F1}      \Vertex[x=2,y=2,Math,L={b_n},Lpos=0]{B2}
    \Vertex[x=0,y=2,Math,L={f_m},Lpos=180]{F2}
    \AddVertexColor{black}{F1,F2}
    \Edge(B1)(F1);
    \Edge(F2)(B2);
    \Edge[color=red](B1)(F2);
    \Edge[color=red,style=Dash](F1)(B2);
    \end{tikzpicture}.
\end{center}

Compositions of several $Q$s give rise to a walk in the graph in the following way: if $v$ is a vertex, and $Q_{i_m}\cdots Q_{i_i}$ is a composition of $Q$s, then we get a sequence of vertices and edges: start with $v$.  Then the first edge is the edge of color $i_1$ incident to $v$.  The other vertex incident with this edge is the next vertex in the sequence.  In this way, we get a sequence of vertices and edges, where each edge connects the previous vertex to the next one; in other words, a walk.

In this example, the composition $Q_iQ_j(b_k)$ gives the walk that starts at $b_k$ at the lower left, then goes up to $f_m$, then right to $b_n$.  The composition $Q_jQ_i(b_k)$ starts at $b_k$ at the lower left, then goes right to $f_\ell$, then goes up to $b_n$.  The fact that these two walks end up at the same vertex is not an accident: it is a consequence of (\ref{eqn:susyalg2}): $Q_iQ_j(b_k)=-Q_jQ_i(b_k)$.  In other words, this is a $4$-cycle.  But in order to get the minus sign, there must be an odd number of dashed edges in this $4$-cycle.  Furthermore, each walk must involve the same number of derivatives, so there must be the same number of arrows directed along the walk for $Q_iQ_j(b_k)$ and $Q_jQ_i(b_k)$.\footnote{Unlike the usual treatment of directed graphs, a walk may go with or against the arrow.}

This last criterion is equivalent to the existence of an integer-valued function on the set of vertices $h\colon V\to\zz$, where for every arrow from vertex $v$ to vertex $w$, we have $h(w)=h(v)+1$.  This function can be used to reproduce the orientation of the edges, and so we can instead use an undirected graph, together with the function $h$, as long as vertices $v$ and $w$ that are adjacent have either $h(w)=h(v)+1$ or $h(v)=h(w)+1$.

This leads to the following definition, first found in \cite{r6-1}.  A more complete description of Adinkras for mathematicians is \cite{zhang}.

\begin{definition}
An \emph{Adinkra with $N$ colors} is a finite bipartite signed graph $(V,E,\mu)$, together with:
\begin{itemize}
    \item an assignment, to each edge, of one of $N$ colors;
    \item an integer-valued function on the set of vertices, i.e., $h\colon V\to \zz$;
\end{itemize}
subject to the following conditions:
\begin{itemize}
    \item each vertex is incident to $N$ edges, one of each color;
    \item the undirected subgraph consisting of edges of any two distinct colors is a disjoint union of $4$-cycles.  Such cycles will be called \emph{bicolor cycles};
    \item any such bicolor cycle contains an odd number of dashed edges $e$;
    \item if $v$ and $w$ are adjacent vertices then $|h(v)-h(w)|=1$.
\end{itemize}
We insist on Adinkras being non-empty and $N\ge 1$.  This is a minor restriction and will avoid filling the exposition with qualifiers.
\end{definition}

The bipartition of the graph comes from the fact that some vertices are bosons and others are fermions.

A signature $\mu$ is called \emph{totally odd} if it satisfies the third condition in the definition above (if all bicolor cycles have an odd number of dashed edges).

The existence of a function $h$ satisfying the fourth condition is equivalent to the graph being bipartite ($h\bmod{2}$ gives a bipartition, and conversely, given a bipartition, we can define $h(v)$ to be $0$ for bosons and $1$ for fermions).  There are typically many such functions $h$ on a bipartite graph, but since the specific function $h$ plays no role in this paper, so we ignore it from now on.

\begin{example}
The following diagram is an Adinkra with 3 colors: red, blue, and black.

\begin{center}
\begin{tikzpicture}[scale=0.1]
\GraphInit[vstyle=Welsh]
\SetVertexNormal[MinSize=5pt]
\SetUpEdge[labelstyle={draw},style={ultra thick}]
\tikzset{Dash/.style={dashed,draw,ultra thick}}
\Vertex[x=0,y=0,Math,L={B},Lpos=180]{B}
\Vertex[x=20,y=0,Math,L={F},Lpos=0]{F}
\Vertex[x=0,y=-20,Math,L={C},Lpos=180]{C}
\Vertex[x=20,y=-20,Math,L={G},Lpos=0]{G}
\Vertex[x=5,y=5,Math,L={A},Lpos=180]{A}
\Vertex[x=25,y=5,Math,L={E},Lpos=0]{E}
\Vertex[x=5,y=-15,Math,L={D},Lpos=180]{D}
\Vertex[x=25,y=-15,Math,L={H},Lpos=0]{H}
\AddVertexColor{black}{F,C,A,H}
\Edge(A)(E)
\Edge(D)(H)
\Edge[color=red](A)(D)
\Edge[color=red,style=Dash](E)(H)
\Edge[color=blue](A)(B)
\Edge[color=blue,style=Dash](E)(F)
\Edge[color=blue,style=Dash](D)(C)
\Edge[color=blue](G)(H)
\Edge(B)(F)
\Edge(C)(G)
\Edge[color=red](B)(C)
\Edge[color=red,style=Dash](F)(G)
\end{tikzpicture}
\end{center}

As a graph, it consists of the vertices and edges of a cube in three dimensions.  As the reader can check, each vertex is incident to one red edge, one blue edge, and one black edge.  If we select any two colors, the result is two opposing square faces.  For instance, selecting red and blue edges gives the left and right faces of this cube.  The left face has one dashed edge, and the right face has three dashed edges.  As will be typical in this paper, we ignore the $h$ function.
\end{example}

\subsection{Hypercubes and Prisms}
An important class of Adinkras with $N$ colors come from $N$-dimensional hypercubes.  The set of vertices is $\{0,1\}^N$.  Two vertices are connected by an edge of color $i$ if the two vertices differ in only one coordinate.  Then the first two conditions in the definition of an Adinkra are satisifed.  The fact that the graph is bipartite can be seen by taking each vertex $(x_1,\ldots,x_N)$ and looking at $\sum x_j\pmod{2}$.

It is possible to choose a signature on these hypercubes so that the third condition is also satisfied.  In fact, there are many such choices.  The story is more fully explained in~\cite{doranApplicationCubicalCohomology2017}.  For our purposes, there is an inductive process of obtaining a totally odd signature on the $N+1$-cube from a totally odd signature on the $N$-cube.

Specifically, given an Adinkra $A$ with $N$ colors, we can create an Adinkra with $N+1$ colors called the \emph{prism on $A$}, written $A\times I$, as follows:

If the vertex set for $A$ is $V$, then the vertex set for $A\times I$ is $V\times\{0,1\}$.  There are three kinds of edges in $A\times I$:
\begin{enumerate}
    \item If $e$ is an edge in $A$ of color $j$ connecting $v$ to $w$, then $(e,0)$ is an edge in $A\times I$ of color $j$ connecting $(v,0)$ to $(w,0)$.  It is dashed if and only if $e$ is.
    \item If $e$ is an edge in $A$ of color $j$ connecting $v$ to $w$, then $(e,1)$ is an edge in $A\times I$ of color $j$ connecting $(v,1)$ to $(w,1)$. It is dashed if and only if $e$ is.
    \item If $v$ is a vertex in $A$, then there is an edge called $v*$ of color $N+1$ connecting $(v,0)$ to $(v,1)$.  It is solid if $v$ is a fermion and dashed if $v$ is a boson.
\end{enumerate}

It is straightforward to check that the result satisfies the conditions in the definition of an Adinkra with $N+1$ colors.  The prism of an $N$-cube is an $N+1$-cube, so this suffices to define the $N+1$-cube Adinkra.

\subsection{Doubly even codes}
\label{sec:codes}
A rich source of examples of Adinkras comes from quotienting these cubical examples with codes.  We begin with a review of codes.  A more thorough introduction to the subject is~\cite{rCHVP}.

We view $\{0,1\}^N=\gft^N$ as a vector space over the field $\gft$ of two elements.  Given an element of $\gft^N$, written $(x_1,\ldots,x_N)$, the \emph{weight} of the element is the number of $j$ with $x_j=1$.  
A \emph{linear code $C$} of length $N$ is any vector subspace of $\gft^N$.  We refer to the dimension of the vector subspace as the dimension of the code $C$.  We say a code is $[N,k]$ if it is of length $N$ and dimension $k$.
A linear code is called \emph{doubly even} if every element of the code has weight divisible by $4$.  The \emph{minimal weight} of a nontrivial code is the minimum weight among nonzero elements of the code.  The minimal weight is of importance in applications to finding error correcting and error detecting codes.  Of course, the minimal weight of a nontrivial doubly even code is a multiple of 4.

A linear code, being a vector subspace, can be specified by describing a basis.  It is traditional in the coding theory literature to write these basis elements as rows of a matrix, called a \emph{generating matrix} for $C$.  The smallest nontrivial doubly even code is called $d_4$ and has length $N=4$ and dimension $1$.  Its generating matrix is
\[\begin{bmatrix}
    1&1&1&1
\end{bmatrix}\]
and the code is $\{0000,1111\}$.

The next doubly even code in size is of length $N=6$ and has dimension $2$, and is called $d_6$, with the following generating matrix:
\[\begin{bmatrix}
    1&1&1&1&0&0\\
    0&0&1&1&1&1
\end{bmatrix}.\]

More generally, for even $N\ge 4$, there is a doubly even code $d_N$ of length $N$ and dimension $\frac{N}{2}-1$, with the following generating matrix:
\[\left[\begin{array}{ccccccccccccc}
    1&1&1&1&0&0&0&\cdots&0&0&0&0&0\\
    0&0&1&1&1&1&0&\cdots&0&0&0&0&0\\
    \vdots&&&&&&&&&&&&\vdots\\
    0&0&0&0&0&0&0&\cdots&0&1&1&1&1
    \end{array}\right]\]
where each row is obtained from the row above it by shifting it two columns to the right.

There are other doubly even codes.  For instance, the code $e_7$ has length $7$ and has three generators:

\[\left[
\begin{array}{ccccccc}
1&1&1&1&0&0&0\\
0&0&1&1&1&1&0\\
1&0&1&0&1&0&1
\end{array}\right].
\]
The code $e_7$ is related to the famous Hamming $[7,4]$ code, which was the founding example for error correcting codes: if you restrict your messages to be in this subspace, then any flip of a single bit can be corrected because it is one bit away from a unique element of the subspace, \cite{Hamming}.
The relationship between the codes comes from the fact that the Hamming $[7,4]$ code is the orthogonal complement of the $e_7$ code with respect to the inner product:
\[\langle (v_1,\ldots,v_N),(w_1,\ldots,w_N)\rangle= \sum_j v_jw_j\pmod{2}.\]
Note that the Hamming $[7,4]$ code itself is not doubly even.

The code $e_8$ has length $8$ and four generators:

\[\left[
\begin{array}{cccccccc}
1&1&1&1&0&0&0&0\\
0&0&1&1&1&1&0&0\\
0&0&0&0&1&1&1&1\\
1&0&1&0&1&0&1&0
\end{array}\right].
\]
It is equivalent to the Extended Hamming $[8,4]$ code which is obtained from the Hamming $[7,4]$ code by adding a single bit to the right of each element in such a way  that every basis element has even weight. 

The notation $d_N$, $e_7$, and $e_8$ connects to lattices, root systems, and Lie algebras.  Given a linear code $C\subset \gft^N$, the set of integer points $(x_1,\ldots,x_N)\in \zz^N$ such that $(x_1,\ldots,x_N)\bmod{2}\in C$ forms a lattice.  Starting from the $d_N$, $e_7$, $e_8$ respectively, we get the lattices $D_N$, $E_7$, and $E_8$.  The nonzero elements of the lattice that are closest to the origin form a root system, and these are indeed the root systems $D_N$, $E_7$, and $E_8$ which are the root systems associated with the Lie algebras $D_N$, $E_7$, and $E_8$.  More generally, there is a relationship between root systems of Lie algebras, lattices, and codes, and these are special cases of it.\cite{rCHVP,rCS}

The code $t^j$ means the trivial code of length $j$ consisting of $\{0\cdots 0\}$.

Given codes $C_1$ and $C_2$, the code $C_1\oplus C_2$ (the direct sum of $C_1$ and $C_2$) means the set of all concatenations $(x_1,\ldots,x_i,y_1,\ldots,y_j)$ where $(x_1,\ldots,x_i)\in C_1$ and $(y_1,\ldots,y_j)\in C_2$.  If $C_1$ has length $N_1$ and dimension $k_1$, and $C_2$ has length $N_2$ and dimension $k_2$, then $C_1\oplus C_2$ has length $N_1+N_2$ and dimension $k_1+k_2$.

A special case is $C_1\oplus t^j$.  This is the set of all elements of $C_1$, with $j$ zeros appended to the end of each.

The classification of doubly even codes is difficult.  But there are formulas due to P. Gaborit counting the number of doubly even codes for each $N$ and $k$ \cite{Gaborit}; these formulas depend on $N\pmod{8}$.  The formulas are cumbersome but it may suffice to satisfy the reader's curiosity to have the following formula for the maximum dimension for a given $N$.  This follows from Gaborit's formulas in \cite{Gaborit}.  The reader can check that these maxima can be achieved with direct sums of $t^j$, $d_N$, $e_7$, and $e_8$ codes.
\begin{theorem}
Let $N=8m+p$, where $0\le p\le 7$ and $m$ are integers.  Then there exists a doubly even code of length $N$ and dimension $k$ if and only if
\begin{equation}
    k\le \begin{cases}
    4m,&p=0,1,2,3\\
    4m+1,&p=4,5\\
    4m+2,&p=6\\
    4m+3,&p=7.
    \end{cases}
\end{equation}
\label{thm:maxkcode}
\end{theorem}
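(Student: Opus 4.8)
The plan is to establish the bound of Theorem~\ref{thm:maxkcode} in two halves: an \emph{upper bound}, showing that no doubly even $[N,k]$ code can exceed the stated dimension, and an \emph{achievability} half, exhibiting codes that meet it. Rather than lean on Gaborit's full enumeration, I would recast the problem in the language of quadratic forms over $\gft$. Since a doubly even code has all weights divisible by $4$, in particular even, it lies in the even-weight subspace $E=\{x\in\gft^N:\wt(x)\equiv 0\bmod 2\}$, which has dimension $N-1$. On $E$ the function $q(x)=\tfrac12\wt(x)\bmod 2$ is a quadratic form whose associated bilinear form is the standard inner product $b(x,y)=\sum_j x_jy_j$: dividing $\wt(x+y)=\wt(x)+\wt(y)-2|x\cap y|$ by $2$ and reducing mod $2$ gives $q(x+y)=q(x)+q(y)+b(x,y)$. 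A subspace $C\subseteq E$ is then doubly even exactly when $q|_C\equiv 0$, i.e.\ $C$ is \emph{totally singular}. Thus the maximal $k$ is the maximal dimension of a totally singular subspace of $(E,q)$, and the theorem becomes a computation of a Witt index.

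For the upper bound I would first extract the coarse bound coming from the bilinear form alone. Since $b$ is alternating on $E$ (because $b(x,x)=\wt(x)\equiv 0$) and its radical is $E\cap E^\perp=E\cap\langle\ones\rangle$, the parity of $N$ controls the degeneracy: for $N$ odd the radical is trivial and $\dim C\le(N-1)/2$, while for $N$ even the radical is $\langle\ones\rangle$ and $C$ injects into $E/\langle\ones\rangle$ unless $\ones\in C$, giving $\dim C\le (N-2)/2+[\ones\in C]$. Evaluating $q(\ones)=\tfrac N2\bmod 2$ shows $\ones$ is admissible in a doubly even code only when $4\mid N$. These remarks already yield the claimed maximum for $p\in\{0,1,2,6,7\}$. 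For the three remaining residues $p\in\{3,4,5\}$ the coarse bound is one too large, and one must see that the \emph{singular} index drops below the bilinear index by $1$; this is precisely the statement that $q$ (resp.\ its reduction modulo the radical $\langle\ones\rangle$, which is a genuine quadratic form exactly when $q(\ones)=0$) has elliptic (``$-$'') type. I would determine the type by counting singular vectors: the number of $x$ with $\wt(x)\equiv 0\bmod 4$ is the binomial sum
\[\sum_{j\equiv 0\,(4)}\binom{N}{j}=2^{N-2}+2^{N/2-1}\cos\!\big(\tfrac{N\pi}{4}\big),\]
obtained by a roots-of-unity filter. Comparing with the standard count $2^{2n-1}+\epsilon\,2^{n-1}$ of singular vectors of a nondegenerate form of type $\epsilon$ on a $2n$-dimensional space pins down $\epsilon$: the sign of $\cos(N\pi/4)$ is positive for $N\equiv 0,1,7$, zero for $N\equiv 2,6$, and negative for $N\equiv 3,4,5\pmod 8$, and the negative cases are exactly where the Witt index, hence the maximal $k$, drops by one. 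This gives the upper bound in all eight residue classes.

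For achievability I would simply exhibit explicit codes built from the self-dual length-$8$ code $e_8$ together with one small summand. Writing $N=8m+p$, the code $e_8^{\oplus m}\oplus S$ is doubly even of the required length and dimension once $S$ is chosen as: the trivial code $t^p$ for $p\in\{0,1,2,3\}$ (with $t^0$ empty); $d_4$ for $p=4$ and $d_4\oplus t^1$ for $p=5$; $d_6$ for $p=6$; and $e_7$ for $p=7$. Since $e_8$ contributes $(8,4)$, $d_4$ contributes $(4,1)$, $d_6$ contributes $(6,2)$, and $e_7$ contributes $(7,3)$ to length and dimension, each construction realizes exactly the value of the theorem, completing the lower bound.

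The main obstacle is the upper bound for $p\in\{3,4,5\}$, where self-orthogonality alone is insufficient and one genuinely needs the finer invariant of the quadratic form. Concretely, the crux is the type (Arf invariant) computation: everything hinges on evaluating the singular-vector count and matching it to the $\pm$-type dichotomy, with the even-$N$ cases requiring the extra care of quotienting by the radical $\langle\ones\rangle$ and checking whether $q$ descends (which it does precisely when $q(\ones)=0$, i.e.\ $4\mid N$). Once the type is identified through $\cos(N\pi/4)$, the remaining bookkeeping across the residues is routine.
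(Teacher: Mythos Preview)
Your argument is correct and takes a genuinely different route from the paper. The paper does not actually prove Theorem~\ref{thm:maxkcode}: it derives the upper bound by citing Gaborit's enumeration formulas for doubly even codes, and it notes (as you also do) that the maxima are realized by direct sums of $t^j$, $d_N$, $e_7$, and $e_8$. Your approach instead gives a self-contained proof by identifying doubly even codes with totally singular subspaces for the quadratic form $q(x)=\tfrac{1}{2}\wt(x)\bmod 2$ on the even-weight hyperplane $E$, and then computing the Witt index of $q$ (or of its reduction modulo the radical $\langle\ones\rangle$) via the roots-of-unity count of weight-$0\bmod 4$ vectors. The payoff of your route is that it explains \emph{why} the bound depends on $N\bmod 8$: the sign of $\cos(N\pi/4)$ detects the Arf invariant, so the cases $p\in\{3,4,5\}$ are precisely those where the form is of minus type and the singular index drops by one below the isotropic bound. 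The paper's citation of Gaborit buys something stronger in principle---an exact count of doubly even $[N,k]$ codes---but at the cost of importing heavier machinery than the statement requires. Your achievability constructions via $e_8^{\oplus m}$ plus a tail summand are essentially the same as the paper's remark, though you spell them out in full while the paper leaves the verification to the reader.
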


The case where $N$ is a multiple of 8 (so that $p=0$) and $k$ is maximal (so that $k=N/2$) has historically been of special interest because these are self-dual codes.  It is also of interest in supersymmetry because four-dimensional $N\ge 2$ supersymmetry turns into one-dimensional $N$ a multiple of 8 when we ignore the spatial dimensions, in a process called \emph{dimensional reduction}.

For $N=8$, the unique maximal code ($k=4$) is $e_8$.  For $N=16$, there are two maximal codes: $e_8\oplus e_8$, and $e_{16}$, which consists of $d_{16}$ together with another generator that has $1$s on the odd coordinates, analogous to $e_8$:

\[\begin{bmatrix}
1111000000000000\\
0011110000000000\\
0000111100000000\\
0000001111000000\\
0000000011110000\\
0000000000111100\\
0000000000001111\\
1010101010101010
\end{bmatrix}\]

For $N=24$ there are 9 maximal codes ($k=12$) and one of them is particularly famous: the extended binary Golay code.  This is a code with minimal weight $8$, and in particular has no codewords of weight 4.  The corresponding lattice is the Leech lattice.  Here is a generating matrix:
\[\begin{bmatrix}
  100000000000100111110001\\
  010000000000010011111010\\
  001000000000001001111101\\
  000100000000100100111110\\
  000010000000110010011101\\
  000001000000111001001110\\
  000000100000111100100101\\
  000000010000111110010010\\
  000000001000011111001001\\
  000000000100001111100110\\
  000000000010010101010111\\
  000000000001101010101011
\end{bmatrix}
\]

For $N=32$ there are 85 maximal codes, 5 of which have minimal weight $8$.\cite{rBilRees,CPS32}  These are of interest because dimensional reduction for $N=1$ ten dimensional supersymmetry, used in superstring theory, results in $N=32$ supersymmetry in one dimension.

\subsection{Quotients by Codes}
\label{sec:quotient}
Given an $N$-cube Adinkra and a linear code $C$, we can define a graph called the quotient of the $N$-cube by $C$, whose vertex set is $\gft^N/C$, the quotient space by the subspace $C$, consisting of cosets of $C$.  An edge of color $j$ connects two such cosets $v+C$ and $w+C$ if there are vertices in $v+C$ and $w+C$ connected by an edge of color $j$ in $\gft^N$.  We write $[v]$ for $v+C$.

The following is straightforward to prove:
\begin{proposition}
If $[v]$ and $[w]$ are adjacent, then for every representative $v\in[v]$, there exists a $w\in[w]$ so that $v$ and $w$ are adjacent.
\label{prop:adjacentrep}
\end{proposition}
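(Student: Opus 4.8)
The plan is to exploit the fact that the edge structure of the $N$-cube is invariant under translation by any vector of $\gft^N$, and in particular under translation by any codeword $c \in C$. Recall that in the $N$-cube two vertices are joined by an edge of color $j$ exactly when they differ in the $j$th coordinate, that is, when they differ by the standard basis vector $\evec_j$. Since adding a fixed vector preserves the set of coordinates in which two points differ, translating an edge of color $j$ by any vector yields another edge of color $j$. This single observation is what drives the argument.

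First I would unpack the hypothesis using the definition of the quotient graph: saying $[v]$ and $[w]$ are adjacent means there is a color $j$ and representatives $v'\in[v]$, $w'\in[w]$ joined by an edge of color $j$ in the $N$-cube, equivalently $w' = v' + \evec_j$. Next, given an arbitrary representative $v\in[v]$, I would write $v = v' + c$ for some $c\in C$, which is possible precisely because $v$ and $v'$ lie in the same coset of $C$. I would then set $w := v + \evec_j$ and compute $w = v' + c + \evec_j = (v'+\evec_j) + c = w' + c$. Since $c\in C$, this exhibits $w$ and $w'$ as lying in the same coset, so $w\in[w]$; and since $v$ and $w$ differ exactly in the $j$th coordinate, they are joined by an edge of color $j$ in the cube. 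Thus $w$ is the desired representative of $[w]$ adjacent to $v$.

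There is no real obstacle to overcome here; the argument is a direct translation computation. The only points requiring care are bookkeeping---keeping the coset $[v]$ and its chosen representatives $v,v'$ distinct in the notation---and the remark that the translated edge retains its color $j$, so that the conclusion in fact holds color-by-color rather than merely up to adjacency. This color-preserving strength is exactly the property that makes the quotient-by-a-code construction behave well, and it is worth recording in the write-up even though it follows automatically from the computation above.
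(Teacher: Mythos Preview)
Your argument is correct and is precisely the straightforward translation argument the paper has in mind; indeed the paper omits the proof entirely, stating only that the proposition is ``straightforward to prove.'' Your write-up, including the observation that the adjacency is preserved color-by-color, is exactly the intended verification.
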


\begin{theorem}[\cite{doranApplicationCubicalCohomology2017}]
A totally odd signature exists on the quotient of an $N$-cube by the linear code $C$ if and only if $C$ is doubly even.
\end{theorem}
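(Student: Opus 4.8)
The plan is to encode the signature additively and reduce the existence question on the quotient $Q=\gft^N/C$ to a single linear equation over $\gft$ on the cube, whose solvability is controlled by the weights of $C$. Writing $\mu=(-1)^{\sigma}$ for $\sigma\colon E\to\gft$, the totally odd condition says that for every bicolor $4$-cycle the four values of $\sigma$ sum to $1$. Switching by $\epsilon\colon V\to\gft$ replaces $\sigma$ by $\sigma+d\epsilon$, where $d\epsilon$ sends an edge $\{v,w\}$ to $\epsilon(v)+\epsilon(w)$; this preserves every cycle sum, so it preserves totally oddness (conceptually, the condition is that the all-ones $2$-cochain is a coboundary). Since the solid $N$-cube is contractible, any $1$-cochain summing to $0$ on all square faces is itself a coboundary, so any two totally odd signatures on the cube differ by a switching. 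I would fix the explicit one $\sigma_0(e)=\sum_{i<j}x_i$ for the color-$j$ edge $e$ with lower endpoint $x$; a one-line check gives sum $1$ on each bicolor square.

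Next I would set up the descent. Provided $C$ has minimum weight $\ge 3$ (which holds whenever $C$ is doubly even, and is needed for $Q$ to be a simple graph at all), each bicolor square of the cube maps to a genuine $4$-cycle of $Q$, so totally odd signatures on $Q$ correspond precisely to $C$-invariant totally odd signatures on the cube. By the previous paragraph such a signature exists iff $\sigma_0$ is switching-equivalent to a $C$-invariant one. A direct computation gives $\sigma_0(e+c)-\sigma_0(e)=\sum_{i<j}c_i$ for a color-$j$ edge $e$, independent of the base point, so, writing $B(c,x)=\sum_{i<j}c_ix_j$, the signature $\sigma_0+d\epsilon$ is $C$-invariant exactly when
\[ \epsilon(x+c)+\epsilon(x)=B(c,x)+\beta(c)\qquad(c\in C,\ x\in\gft^N) \]
for some constants $\beta(c)$.

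Then I would analyze solvability of this equation. Expanding $\epsilon(x+c+c')+\epsilon(x)$ in two orders forces the cocycle identity $\beta(c+c')+\beta(c)+\beta(c')=B(c,c')=B(c',c)$; hence $B$ must be symmetric on $C$, and setting $c=c'$ forces the diagonal $B(c,c)=\binom{\wt(c)}{2}\bmod 2$ to vanish. Conversely, once $B|_{C\times C}$ is a symmetric alternating form, these conditions let one build a quadratic refinement $\beta$ and then solve for $\epsilon$ coset by coset on $\gft^N/C$. Thus a totally odd signature on $Q$ exists iff (i) $B$ is symmetric on $C$ and (ii) $\binom{\wt(c)}{2}\equiv 0\pmod 2$ for all $c\in C$. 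Using $B(c,c')+B(c',c)=\wt(c)\wt(c')+\langle c,c'\rangle$, for an \emph{even} code condition (i) is just self-orthogonality, and (ii) becomes $\wt(c)/2\equiv 0$, i.e. $\wt(c)\equiv 0\pmod 4$. Since the bipartition of the cube descends to $Q$ precisely when $C$ is even, and since a doubly even code is automatically self-orthogonal (which supplies (i) in the forward direction), this yields existence $\iff$ $C$ doubly even.

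The step I expect to be the main obstacle is the passage from mod $2$ to mod $4$: the totally odd condition alone delivers only (ii), namely $\wt(c)\equiv 0,1\pmod 4$, and indeed the quotient of the $5$-cube by $\langle 11111\rangle$ admits a totally odd signature even though that code is not doubly even. What upgrades weights to $0\bmod 4$ is the additional input that $C$ be even, i.e. that $Q$ inherit the bipartite (Adinkra) structure, so making this hypothesis explicit is essential. A secondary care-point is justifying the two structural reductions used above, namely that all totally odd signatures on the cube are switching-equivalent (contractibility of the cube complex) and that bicolor squares persist as $4$-cycles in $Q$ (minimum weight $\ge 3$), since these are what license replacing the problem by the single equation for $\epsilon$.
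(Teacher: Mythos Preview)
The paper does not prove this theorem; it is quoted from \cite{doranApplicationCubicalCohomology2017} and used as background. So there is no in-paper proof to compare your argument against.

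That said, your approach is essentially the cohomological one underlying the cited reference: encode signatures additively over $\gft$, observe that totally odd signatures form a coset of the switching coboundaries, use the vanishing of $H^1$ of the solid cube to reduce to a single explicit signature $\sigma_0$, and then compute the obstruction to $C$-invariance. Your formulas $\sigma_0(e)=\sum_{i<j}x_i$ and $B(c,x)=\sum_{i<j}c_ix_j$, the cocycle identity $\beta(c+c')+\beta(c)+\beta(c')=B(c,c')$, and the evaluations $B(c,c)=\binom{\wt(c)}{2}\bmod 2$ and $B(c,c')+B(c',c)=\wt(c)\wt(c')+\langle c,c'\rangle\bmod 2$ are all correct, and the converse construction of $\beta$ and then $\epsilon$ (coset by coset) goes through once the obstructions vanish.

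Your caveat is well taken and worth making explicit. The bare totally odd condition (together with minimum weight $\ge 3$ so that bicolor squares survive as $4$-cycles in $Q$) only forces $\wt(c)\equiv 0,1\pmod 4$; your $\langle 11111\rangle$ example is genuine, and a $C$-invariant totally odd signature on the $5$-cube does exist there. The theorem as printed should be read in the Adinkra context, where the quotient is required to be bipartite, which forces $C$ to be even; with that extra input your conditions (i) and (ii) collapse exactly to ``doubly even'', as you say. So the argument is sound once that hypothesis is made precise.
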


Thus, every doubly even linear code gives rise to an Adinkra.  If the code is of length $N$ and dimension $k$, then the Adinkra has $N$ colors and $2^{N-k}$ vertices.

One of the major results in the classification of Adinkras states that all connected Adinkras arise as quotients of hypercubes.  
\begin{theorem}[Theorem 4.3 \cite{doranCodesSupersymmetryOne2011}]
Every connected Adinkra is colored-graph isomorphic to a quotient of a hypercube Adinkra by a doubly even code.  More generally, every Adinkra is a disjoint union of such quotients.\label{thm:at}
\end{theorem}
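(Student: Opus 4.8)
The plan is to reduce to the connected case and then realize a connected Adinkra as a Cayley-type quotient of the $N$-cube. The ``more generally'' clause is immediate: every Adinkra is the disjoint union of its connected components, and each component inherits all four defining conditions, so it suffices to treat a connected Adinkra and take the disjoint union of the resulting quotients.

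So fix a connected Adinkra $(V,E,\mu)$ with $N$ colors and a base vertex $v_0$. The first two Adinkra conditions---each vertex meets exactly one edge of each color---let me define, for each color $i$, the \emph{color-$i$ flip} $\sigma_i\colon V\to V$ sending a vertex to the other endpoint of its unique color-$i$ edge; each $\sigma_i$ is an involution. First I would show the $\sigma_i$ commute: given distinct colors $i,j$ and a vertex $v$, the bicolor condition puts $v$ on a $4$-cycle of the two-color subgraph, and chasing the edges of that cycle (using $\sigma_i^2=\mathrm{id}$) forces $\sigma_i\sigma_j v=\sigma_j\sigma_i v$. Hence $G:=\langle\sigma_1,\dots,\sigma_N\rangle$ is an abelian group generated by commuting involutions, i.e.\ a homomorphic image of $\gft^N$, and connectedness makes its action on $V$ transitive.

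Next I would build the covering map explicitly. Define $\phi\colon\gft^N\to V$ by $\phi(a)=\sigma_1^{a_1}\cdots\sigma_N^{a_N}(v_0)$; this is well defined because the $\sigma_i$ commute and square to the identity, and it is surjective by transitivity. Setting $C:=\{a\in\gft^N:\phi(a)=v_0\}$, commutativity gives $\phi(a)=\phi(b)\iff a+b\in C$, so $C$ is a linear subspace and $\phi$ descends to a bijection $\gft^N/C\to V$. Since $\phi$ takes the cube edge $a\to a+\evec_i$ to a color-$i$ edge, this bijection is an isomorphism of edge-colored graphs between the quotient of the $N$-cube by $C$ and the underlying colored graph of our Adinkra. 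Because our Adinkra carries a totally odd signature living on this very quotient graph, the already-cited result~\cite{doranApplicationCubicalCohomology2017} (a totally odd signature exists on the quotient by $C$ if and only if $C$ is doubly even) forces $C$ to be doubly even.

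Finally I would promote the colored-graph isomorphism to an isomorphism of Adinkras by realizing $\mu$ as a pushforward from the cube. Pull the signature back along $\phi$, giving the cube edge $a\to a+\evec_i$ the sign of its image edge. Since $C$ is doubly even it has no words of weight $1$ or $2$, so each bicolor $4$-cycle $\{a,a+\evec_i,a+\evec_i+\evec_j,a+\evec_j\}$ of the cube maps to four distinct vertices, hence to a genuine bicolor $4$-cycle of the quotient; as $\mu$ is totally odd, the pulled-back signature has an odd number of dashed edges on every cube $4$-cycle, so $(\gft^N,\phi^*\mu)$ is a hypercube Adinkra. By construction $\phi^*\mu$ is constant on $C$-orbits of edges, so it descends through the quotient to exactly $\mu$; thus our Adinkra is the quotient of the hypercube Adinkra $(\gft^N,\phi^*\mu)$ by the doubly even code $C$, as claimed. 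I expect the main obstacle to be the step identifying $C$ as doubly even: here I lean entirely on the cited characterization, whereas a self-contained argument would instead require a sign-holonomy computation showing that the product of edge signs around the ``rainbow'' closed walk attached to a codeword of weight $w$ is controlled by the $\binom{w}{2}$ bicolor cycles it sweeps out, and that total oddness of all those cycles is consistent only when $4\mid w$. The remaining points---global commutativity of the flips $\sigma_i$ and checking that pulling back then pushing forward returns $\mu$ itself rather than merely a switching-equivalent signature---are routine once the absence of short codewords is in hand.
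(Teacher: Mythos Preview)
The paper does not give its own proof of this statement: it is quoted verbatim as Theorem~4.3 of \cite{doranCodesSupersymmetryOne2011} and used as a black box, so there is no in-paper argument to compare against. Your proposal is therefore supplying a proof where the paper supplies only a citation.

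That said, your argument is correct and is essentially the standard one. Two small comments. First, the theorem as stated in this paper only asserts a \emph{colored-graph} isomorphism (the paper explicitly notes right after the statement that the signature need not be preserved), so your final paragraph---pulling $\mu$ back to the cube and pushing it forward again---proves more than is required; you may stop once you have the bijection $\gft^N/C\to V$ respecting edge colors and have invoked the cited theorem of \cite{doranApplicationCubicalCohomology2017} to conclude $C$ is doubly even. Second, before invoking that cited theorem it is worth making explicit (as you partly do) that $C$ contains no codewords of weight $1$, $2$, or $3$: weight $1$ would force a loop, weight $2$ a multi-edge, and any odd weight an odd closed walk in a bipartite graph. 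This guarantees the quotient $\gft^N/C$ is genuinely a simple bipartite colored graph, so that the hypothesis ``a totally odd signature exists on the quotient'' in the cited theorem is meaningful; once that is in hand your appeal to \cite{doranApplicationCubicalCohomology2017} cleanly yields that $C$ is doubly even.
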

The term {\em colored-graph isomorphic} here refers to a graph isomorphism that preserves colors.  But the signature is not assumed to be preserved.

Thus, to specify a connected Adinkra, it suffices to specify a code, and a totally odd signature.

\begin{theorem}
Given $N=8m+p$, where $0\le p\le 7$ and $m$ are integers, the minimum number of vertices for an Adinkra with $N$ colors is:
\begin{equation}
    \#V_{\min} = \begin{cases}
    2^{4m},&p=0\\
    2^{4m+1},&p=1\\
    2^{4m+2},&p=2\\
    2^{4m+3},&p=3,4\\
    2^{4m+4},&p=5,6,7
    \end{cases}\label{eqn:minadinkra}.
\end{equation}
Every Adinkra with $N$ colors has a number of vertices which is a multiple of this.
\label{thm:minadinkra}
\end{theorem}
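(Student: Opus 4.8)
The plan is to reduce everything to the structural classification of Theorem~\ref{thm:at} together with the dimension bound of Theorem~\ref{thm:maxkcode}. First I would observe that for a \emph{connected} Adinkra the vertex count is completely determined by the underlying code: by Theorem~\ref{thm:at} a connected Adinkra with $N$ colors is colored-graph isomorphic to a quotient of the $N$-cube by a doubly even code $C$ of some dimension $k$, and such a quotient has $2^{N-k}$ vertices (as recorded immediately after the existence theorem for totally odd signatures). Minimizing the number of vertices over connected Adinkras is therefore the same as \emph{maximizing} $k$ over doubly even $[N,k]$ codes. By Theorem~\ref{thm:maxkcode} this maximum, which I will call $k_{\max}$, exists and is attained, so the minimum number of vertices of a connected Adinkra with $N$ colors is exactly $2^{N-k_{\max}}$.

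Next I would carry out the elementary arithmetic showing that $2^{N-k_{\max}}$ equals the piecewise expression in (\ref{eqn:minadinkra}). Writing $N=8m+p$ and substituting the value of $k_{\max}$ from Theorem~\ref{thm:maxkcode} in each of the eight residue classes $p=0,\ldots,7$ gives $N-k_{\max}$ equal to $4m,\,4m+1,\,4m+2,\,4m+3,\,4m+3,\,4m+4,\,4m+4,\,4m+4$ respectively, which matches (\ref{eqn:minadinkra}) case by case. This step is pure bookkeeping, but it is where the merging of residues becomes visible (for instance $p=3$ and $p=4$ both yielding $2^{4m+3}$, and $p=5,6,7$ all yielding $2^{4m+4}$), and this merging comes precisely from the jumps in $k_{\max}$.

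For the general, possibly disconnected, case I would use that each connected component of an Adinkra with $N$ colors is again an Adinkra with $N$ colors: the defining conditions—each vertex incident to exactly one edge of each color, and each bicolor subgraph being a disjoint union of $4$-cycles with an odd number of dashed edges—are local and hence restrict to every component. Thus, by Theorem~\ref{thm:at}, every component is a quotient of the $N$-cube by a doubly even code of some dimension $k_i\le k_{\max}$, contributing $2^{N-k_i}=2^{N-k_{\max}}\cdot 2^{k_{\max}-k_i}$ vertices, a nonnegative integer power of two times $\#V_{\min}$. Summing over components shows the total number of vertices is a multiple of $\#V_{\min}=2^{N-k_{\max}}$; taking a single connected Adinkra coming from a dimension-$k_{\max}$ code shows this minimum is attained.

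I do not expect a genuine obstacle here, since the argument is a direct assembly of the two cited theorems. The only points requiring care are the claim that connected components inherit the full Adinkra structure (so that both Theorem~\ref{thm:at} and the vertex formula apply componentwise) and the accurate transcription of $k_{\max}$ from Theorem~\ref{thm:maxkcode} into the exponent $N-k_{\max}$ across all eight residue classes.
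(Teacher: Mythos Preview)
Your proposal is correct and follows essentially the same approach as the paper: reduce to the connected case via Theorem~\ref{thm:at}, use Theorem~\ref{thm:maxkcode} to identify $k_{\max}$, compute $2^{N-k_{\max}}$ case by case, and obtain the divisibility claim from the disjoint-union decomposition. Your write-up is in fact slightly more explicit than the paper's (you spell out the arithmetic in each residue class and verify that components inherit the Adinkra structure), but the underlying argument is identical.
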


\begin{proof}
If an Adinkra is connected, Theorem~\ref{thm:at} says that it is a quotient of an $N$-cube with a doubly even code.  The number of vertices is $2^{N-k}$.  By Theorem~\ref{thm:maxkcode}, we have a formula for the maximum dimension, which we call $k_{\max}$.  Then $2^{N-k_{\max}}$ divides $2^{N-k}=\#V$.  The formula for $2^{N-k_{\max}}$ can be worked out case-by-case according to $p$, and the result is (\ref{eqn:minadinkra}).

Every Adinkra is a disjoint union of connected Adinkras, so the divisibility condition continues to hold.
\end{proof}

\begin{corollary}
If $N\ge 2$, the number of vertices of an Adinkra with $N$ colors is a multiple of 4.
\label{cor:vmult4}
\end{corollary}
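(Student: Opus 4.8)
The plan is to deduce this directly from Theorem~\ref{thm:minadinkra}, which expresses the minimum vertex count $\#V_{\min}$ as a power of $2$ and guarantees that every Adinkra with $N$ colors has $\#V$ equal to a multiple of $\#V_{\min}$. Writing $\#V_{\min}=2^e$, it suffices to show that the exponent $e$ satisfies $e\ge 2$ whenever $N\ge 2$; for then $4=2^2$ divides $2^e=\#V_{\min}$, and since $\#V_{\min}$ divides $\#V$, we conclude $4\mid\#V$.

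First I would record the exponent $e$ appearing in each case of \eqref{eqn:minadinkra}, as a function of $m$ and $p$ where $N=8m+p$ with $0\le p\le 7$: namely $e=4m$ for $p=0$, $e=4m+1$ for $p=1$, $e=4m+2$ for $p=2$, $e=4m+3$ for $p\in\{3,4\}$, and $e=4m+4$ for $p\in\{5,6,7\}$. For $p\ge 2$ one already has $e\ge 4m+2\ge 2$ regardless of the value of $m$, so those residue classes are immediate.

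The only cases requiring care are $p=0$ and $p=1$, where $e$ could in principle be small. Here the hypothesis $N\ge 2$ does exactly the work needed: if $p=0$ then $N=8m$, so $N\ge 2$ forces $m\ge 1$ and hence $e=4m\ge 4$; if $p=1$ then $N=8m+1$, so $N\ge 2$ again forces $m\ge 1$ and hence $e=4m+1\ge 5$. Thus in every residue class $e\ge 2$, which completes the argument once we invoke the divisibility $\#V_{\min}\mid\#V$.

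There is no real obstacle here; the content is entirely the bookkeeping of the five cases of \eqref{eqn:minadinkra}. The one subtlety worth flagging is that the exponent dips below $2$ precisely in the excluded small instances $N=0$ (giving $e=0$) and $N=1$ (giving $e=1$), which is exactly why the hypothesis $N\ge 2$ is required and cannot be weakened.
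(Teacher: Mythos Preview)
Your argument is correct and is exactly the case check the paper leaves implicit: the corollary is stated immediately after Theorem~\ref{thm:minadinkra} with no separate proof, so the intended reasoning is precisely that $\#V_{\min}$ is a multiple of $4$ in each residue class once $N\ge 2$, and $\#V_{\min}\mid\#V$. Your observation that the hypothesis $N\ge 2$ is needed only to exclude the cases $(m,p)=(0,0)$ and $(0,1)$ is the whole content.
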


\begin{example}
\label{ex:k4}
The following figure is a quotient of the $4$-cube by the code $d_4=\{(0000),(1111)\}$.
The binary labeling used in the lower figure comes from considering the cosets of $\gft^4/d_4$.  Each coset is a pair of 4-tuples, which differ in every coordinate, for instance, $\{1011,0100\}$.  We label the coset by picking the representative from the coset whose last coordinate is $0$.  Then these form a hypercube according to the first three coordinates, and indeed, if we focus only on colors 1, 2, and 3, then this is the 3-cube.  But the fourth color connects antipodal vertices.

There are many totally odd signatures on this graph, and the signature in the drawing below is one such.

\begin{center}
\begin{tikzpicture}[scale=0.15]
\GraphInit[vstyle=Welsh]
\SetVertexNormal[MinSize=5pt]
\SetUpEdge[labelstyle={draw},style={ultra thick}]
\tikzset{Dash/.style={dashed,draw,ultra thick}}
\Vertex[x=0,y=0,Math,L={0000},Lpos=180]{0000}
\Vertex[x=10,y=0,Math,L={1000},Lpos=0]{1000}
\Vertex[x=0,y=-10,Math,L={0100},Lpos=180]{0100}
\Vertex[x=10,y=-10,Math,L={1100},Lpos=0]{1100}
\Vertex[x=5,y=2,Math,L={0010},Lpos=180]{0010}
\Vertex[x=15,y=2,Math,L={1010},Lpos=0]{1010}
\Vertex[x=5,y=-8,Math,L={0110},Lpos=180]{0110}
\Vertex[x=15,y=-8,Math,L={1110},Lpos=0]{1110}
\AddVertexColor{black}{1000,0100,0010,1110}
\Edge[color=green,style=Dash](0000)(1110)
\Edge[color=green](1000)(0110)
\Edge[color=green,style=Dash](0100)(1010)
\Edge[color=green](1100)(0010)

\Edge(0010)(1010)
\Edge(0110)(1110)
\Edge[color=red](0010)(0110)
\Edge[color=red,style=Dash](1010)(1110)
\Edge[color=blue](0000)(0010)
\Edge[color=blue,style=Dash](1000)(1010)
\Edge[color=blue,style=Dash](0100)(0110)
\Edge[color=blue](1100)(1110)
\Edge(0000)(1000)
\Edge(0100)(1100)
\Edge[color=red](0000)(0100)
\Edge[color=red,style=Dash](1000)(1100)
\end{tikzpicture}
\end{center}

We can rearrange these to show that we have a $K(4,4)$ graph:
\begin{center}
\begin{tikzpicture}[scale=0.15]
\GraphInit[vstyle=Welsh]
\SetVertexNormal[MinSize=5pt]
\SetUpEdge[labelstyle={draw},style={ultra thick}]
\tikzset{Dash/.style={dashed,draw,ultra thick}}
\Vertex[x=0,y=0,Math,L={0000},Lpos=270]{0000}
\Vertex[x=10,y=0,Math,L={1100},Lpos=270]{1100}
\Vertex[x=20,y=0,Math,L={1010},Lpos=270]{1010}
\Vertex[x=30,y=0,Math,L={0110},Lpos=270]{0110}
\Vertex[x=0,y=10,Math,L={1000},Lpos=90]{1000}
\Vertex[x=10,y=10,Math,L={0100},Lpos=90]{0100}
\Vertex[x=20,y=10,Math,L={0010},Lpos=90]{0010}
\Vertex[x=30,y=10,Math,L={1110},Lpos=90]{1110}
\AddVertexColor{black}{1000,0100,0010,1110}
\Edge[color=green,style=Dash](0000)(1110)
\Edge[color=green](1000)(0110)
\Edge[color=green,style=Dash](0100)(1010)
\Edge[color=green](1100)(0010)

\Edge(0010)(1010)
\Edge(0110)(1110)
\Edge[color=red](0010)(0110)
\Edge[color=red,style=Dash](1010)(1110)
\Edge[color=blue](0000)(0010)
\Edge[color=blue,style=Dash](1000)(1010)
\Edge[color=blue,style=Dash](0100)(0110)
\Edge[color=blue](1100)(1110)
\Edge(0000)(1000)
\Edge(0100)(1100)
\Edge[color=red](0000)(0100)
\Edge[color=red,style=Dash](1000)(1100)
\end{tikzpicture}
\end{center}

\end{example}

\begin{example}
\label{ex:Lk4}
We can find the signed adjacency matrix and the signed Laplacian of the previous example, as long as we order the vertices.

We number the vertices with the bosons first, left to right, and then the fermions, from left to right.  The signed adjacency matrix is then:
\[A=\left[\begin{array}{cccccccc}
0&0&0&0&1&1&1&-1\\
0&0&0&0&-1&1&1&1\\
0&0&0&0&-1&-1&1&-1\\
0&0&0&0&1&-1&1&1\\
1&-1&-1&1&0&0&0&0\\
1&1&-1&-1&0&0&0&0\\
1&1&1&1&0&0&0&0\\
-1&1&-1&1&0&0&0&0
\end{array}\right]\]
and the signed Laplacian is
\[L=4I-A=\left[\begin{array}{cccccccc}
4&0&0&0&-1&-1&-1&1\\
0&4&0&0&1&-1&-1&-1\\
0&0&4&0&1&1&-1&1\\
0&0&0&4&-1&1&-1&-1\\
-1&1&1&-1&4&0&0&0\\
-1&-1&-1&-1&0&4&0&0\\
-1&-1&-1&-1&0&0&4&0\\
1&-1&1&-1&0&0&0&4
\end{array}\right]\]

\end{example}

\begin{example}
By constructing a prism on the previous example, we get an Adinkra with $N=5$ colors.  This can be viewed as a quotient of a $5$-cube with the code $\{00000,11110\}$.  

More generally, given an Adinkra which is a quotient of an $N$-cube by a code $C$ of length $N$, the prism of this Adinkra is a quotient of an $N+1$-cube by the code of length $N+1$ obtained from $C$ by adding a zero to all codewords in $C$.

\begin{center}
\begin{tikzpicture}[scale=0.15]
\GraphInit[vstyle=Welsh]
\SetVertexNormal[MinSize=5pt]
\SetUpEdge[labelstyle={draw},style={ultra thick}]
\tikzset{Dash/.style={dashed,draw,ultra thick}}
\Vertex[x=0,y=0,Math,L={00000},Lpos=180]{00000}
\Vertex[x=10,y=0,Math,L={10000},Lpos=0]{10000}
\Vertex[x=0,y=-10,Math,L={01000},Lpos=180]{01000}
\Vertex[x=10,y=-10,Math,L={11000},Lpos=0]{11000}
\Vertex[x=5,y=2,Math,L={00100},Lpos=180]{00100}
\Vertex[x=15,y=2,Math,L={10100},Lpos=0]{10100}
\Vertex[x=5,y=-8,Math,L={01100},Lpos=180]{01100}
\Vertex[x=15,y=-8,Math,L={11100},Lpos=0]{11100}

\Vertex[x=25,y=5,Math,L={00001},Lpos=180]{00001}
\Vertex[x=35,y=5,Math,L={10001},Lpos=0]{10001}
\Vertex[x=25,y=-5,Math,L={01001},Lpos=180]{01001}
\Vertex[x=35,y=-5,Math,L={11001},Lpos=0]{11001}
\Vertex[x=30,y=7,Math,L={00101},Lpos=180]{00101}
\Vertex[x=40,y=7,Math,L={10101},Lpos=0]{10101}
\Vertex[x=30,y=-3,Math,L={01101},Lpos=180]{01101}
\Vertex[x=40,y=-3,Math,L={11101},Lpos=0]{11101}
\AddVertexColor{black}{10000,01000,00100,11100,00001,11001,10101,01101}
\Edge[color=green,style=Dash](00000)(11100)
\Edge[color=green](10000)(01100)
\Edge[color=green,style=Dash](01000)(10100)
\Edge[color=green](11000)(00100)
\Edge(00100)(10100)
\Edge(01100)(11100)
\Edge[color=red](00100)(01100)
\Edge[color=red,style=Dash](10100)(11100)
\Edge[color=blue](00000)(00100)
\Edge[color=blue,style=Dash](10000)(10100)
\Edge[color=blue,style=Dash](01000)(01100)
\Edge[color=blue](11000)(11100)
\Edge(00000)(10000)
\Edge(01000)(11000)
\Edge[color=red](00000)(01000)
\Edge[color=red,style=Dash](10000)(11000)

\Edge[color=green,style=Dash](00001)(11101)
\Edge[color=green](10001)(01101)
\Edge[color=green,style=Dash](01001)(10101)
\Edge[color=green](11001)(00101)
\Edge(00101)(10101)
\Edge(01101)(11101)
\Edge[color=red](00101)(01101)
\Edge[color=red,style=Dash](10101)(11101)
\Edge[color=blue](00001)(00101)
\Edge[color=blue,style=Dash](10001)(10101)
\Edge[color=blue,style=Dash](01001)(01101)
\Edge[color=blue](11001)(11101)
\Edge(00001)(10001)
\Edge(01001)(11001)
\Edge[color=red](00001)(01001)
\Edge[color=red,style=Dash](10001)(11001)
\Edge[color=orange](00000)(00001)
\Edge[color=orange](11000)(11001)
\Edge[color=orange](10100)(10101)
\Edge[color=orange](01100)(01101)
\Edge[color=orange,style=Dash](10000)(10001)
\Edge[color=orange,style=Dash](01000)(01001)
\Edge[color=orange,style=Dash](00100)(00101)
\Edge[color=orange,style=Dash](11100)(11101)
\end{tikzpicture}
\end{center}

We can draw this prism on the $K(4,4)$ graph:
\begin{center}
\begin{tikzpicture}[scale=0.15]
\GraphInit[vstyle=Welsh]
\SetVertexNormal[MinSize=5pt]
\SetUpEdge[labelstyle={draw},style={ultra thick}]
\tikzset{Dash/.style={dashed,draw,ultra thick}}
\Vertex[x=0,y=0,Math,L={00000},Lpos=270]{00000}
\Vertex[x=10,y=0,Math,L={11000},Lpos=270]{11000}
\Vertex[x=20,y=0,Math,L={10100},Lpos=270]{10100}
\Vertex[x=30,y=0,Math,L={01100},Lpos=270]{01100}
\Vertex[x=0,y=10,Math,L={10000},Lpos=90]{10000}
\Vertex[x=10,y=10,Math,L={01000},Lpos=90]{01000}
\Vertex[x=20,y=10,Math,L={00100},Lpos=90]{00100}
\Vertex[x=30,y=10,Math,L={11100},Lpos=90]{11100}

\Vertex[x=40,y=5,Math,L={00001},Lpos=270]{00001}
\Vertex[x=50,y=5,Math,L={11001},Lpos=270]{11001}
\Vertex[x=60,y=5,Math,L={10101},Lpos=270]{10101}
\Vertex[x=70,y=5,Math,L={01101},Lpos=270]{01101}
\Vertex[x=40,y=15,Math,L={10001},Lpos=90]{10001}
\Vertex[x=50,y=15,Math,L={01001},Lpos=90]{01001}
\Vertex[x=60,y=15,Math,L={00101},Lpos=90]{00101}
\Vertex[x=70,y=15,Math,L={11101},Lpos=90]{11101}
\AddVertexColor{black}{10000,01000,00100,11100,00001,11001,10101,01101}
\Edge[color=green,style=Dash](00000)(11100)
\Edge[color=green](10000)(01100)
\Edge[color=green,style=Dash](01000)(10100)
\Edge[color=green](11000)(00100)
\Edge(00100)(10100)
\Edge(01100)(11100)
\Edge[color=red](00100)(01100)
\Edge[color=red,style=Dash](10100)(11100)
\Edge[color=blue](00000)(00100)
\Edge[color=blue,style=Dash](10000)(10100)
\Edge[color=blue,style=Dash](01000)(01100)
\Edge[color=blue](11000)(11100)
\Edge(00000)(10000)
\Edge(01000)(11000)
\Edge[color=red](00000)(01000)
\Edge[color=red,style=Dash](10000)(11000)

\Edge[color=green,style=Dash](00001)(11101)
\Edge[color=green](10001)(01101)
\Edge[color=green,style=Dash](01001)(10101)
\Edge[color=green](11001)(00101)
\Edge(00101)(10101)
\Edge(01101)(11101)
\Edge[color=red](00101)(01101)
\Edge[color=red,style=Dash](10101)(11101)
\Edge[color=blue](00001)(00101)
\Edge[color=blue,style=Dash](10001)(10101)
\Edge[color=blue,style=Dash](01001)(01101)
\Edge[color=blue](11001)(11101)
\Edge(00001)(10001)
\Edge(01001)(11001)
\Edge[color=red](00001)(01001)
\Edge[color=red,style=Dash](10001)(11001)
\Edge[color=orange](00000)(00001)
\Edge[color=orange](11000)(11001)
\Edge[color=orange](10100)(10101)
\Edge[color=orange](01100)(01101)
\Edge[color=orange,style=Dash](10000)(10001)
\Edge[color=orange,style=Dash](01000)(01001)
\Edge[color=orange,style=Dash](00100)(00101)
\Edge[color=orange,style=Dash](11100)(11101)
\end{tikzpicture}
\end{center}

\end{example}

\begin{example}
This is the quotient of an $8$-cube by the $e_8$ code.  Note that this is also a $K(8,8)$ graph.

\begin{center}
\begin{tikzpicture}[scale=0.15]
\GraphInit[vstyle=Welsh]
\SetVertexNormal[MinSize=5pt]
\SetUpEdge[labelstyle={draw},style={ultra thick}]
\tikzset{Dash/.style={dashed,draw,ultra thick}}
\Vertex[x=0,y=0,Math,L={00000000},Lpos=270]{00000}
\Vertex[x=10,y=0,Math,L={11000000},Lpos=270]{11000}
\Vertex[x=20,y=0,Math,L={10100000},Lpos=270]{10100}
\Vertex[x=30,y=0,Math,L={01100000},Lpos=270]{01100}
\Vertex[x=0,y=10,Math,L={10000000},Lpos=90]{10000}
\Vertex[x=10,y=10,Math,L={01000000},Lpos=90]{01000}
\Vertex[x=20,y=10,Math,L={00100000},Lpos=90]{00100}
\Vertex[x=30,y=10,Math,L={11100000},Lpos=90]{11100}

\Vertex[x=40,y=10,Math,L={00001000},Lpos=90]{00001}
\Vertex[x=50,y=10,Math,L={11001000},Lpos=90]{11001}
\Vertex[x=60,y=10,Math,L={10101000},Lpos=90]{10101}
\Vertex[x=70,y=10,Math,L={01101000},Lpos=90]{01101}
\Vertex[x=40,y=0,Math,L={10001000},Lpos=270]{10001}
\Vertex[x=50,y=0,Math,L={01001000},Lpos=270]{01001}
\Vertex[x=60,y=0,Math,L={00101000},Lpos=270]{00101}
\Vertex[x=70,y=0,Math,L={11101000},Lpos=270]{11101}
\AddVertexColor{black}{10000,01000,00100,11100,00001,11001,10101,01101}
\Edge[color=green,style=Dash](00000)(11100)
\Edge[color=green](10000)(01100)
\Edge[color=green,style=Dash](01000)(10100)
\Edge[color=green](11000)(00100)
\Edge(00100)(10100)
\Edge(01100)(11100)
\Edge[color=red](00100)(01100)
\Edge[color=red,style=Dash](10100)(11100)
\Edge[color=blue](00000)(00100)
\Edge[color=blue,style=Dash](10000)(10100)
\Edge[color=blue,style=Dash](01000)(01100)
\Edge[color=blue](11000)(11100)
\Edge(00000)(10000)
\Edge(01000)(11000)
\Edge[color=red](00000)(01000)
\Edge[color=red,style=Dash](10000)(11000)

\Edge[color=green,style=Dash](00001)(11101)
\Edge[color=green](10001)(01101)
\Edge[color=green,style=Dash](01001)(10101)
\Edge[color=green](11001)(00101)
\Edge(00101)(10101)
\Edge(01101)(11101)
\Edge[color=red](00101)(01101)
\Edge[color=red,style=Dash](10101)(11101)
\Edge[color=blue](00001)(00101)
\Edge[color=blue,style=Dash](10001)(10101)
\Edge[color=blue,style=Dash](01001)(01101)
\Edge[color=blue](11001)(11101)
\Edge(00001)(10001)
\Edge(01001)(11001)
\Edge[color=red](00001)(01001)
\Edge[color=red,style=Dash](10001)(11001)
\Edge[color=orange](00000)(00001)
\Edge[color=orange](11000)(11001)
\Edge[color=orange](10100)(10101)
\Edge[color=orange](01100)(01101)
\Edge[color=orange,style=Dash](10000)(10001)
\Edge[color=orange,style=Dash](01000)(01001)
\Edge[color=orange,style=Dash](00100)(00101)
\Edge[color=orange,style=Dash](11100)(11101)
\Edge[color=purple](00000)(11001)
\Edge[color=purple,style=Dash](10000)(01001)
\Edge[color=purple](01000)(10001)
\Edge[color=purple,style=Dash](11000)(00001)
\Edge[color=purple](10100)(01101)
\Edge[color=purple,style=Dash](00100)(11101)
\Edge[color=purple,style=Dash](01100)(10101)
\Edge[color=purple](11100)(00101)
\Edge[color=brown](00000)(10101)
\Edge[color=brown,style=Dash](10000)(00101)
\Edge[color=brown,style=Dash](11000)(01101)
\Edge[color=brown](01000)(11101)
\Edge[color=brown,style=Dash](10100)(00001)
\Edge[color=brown](00100)(10001)
\Edge[color=brown](01100)(11001)
\Edge[color=brown,style=Dash](11100)(01001)
\Edge[color=yellow](00000)(01101)
\Edge[color=yellow,style=Dash](10000)(11101)
\Edge[color=yellow](11000)(10101)
\Edge[color=yellow,style=Dash](01000)(00101)
\Edge[color=yellow,style=Dash](10100)(11001)
\Edge[color=yellow](00100)(01001)
\Edge[color=yellow,style=Dash](01100)(00001)
\Edge[color=yellow](11100)(10001)
\end{tikzpicture}
\end{center}

The situations when we have $K(m,m)$ are fairly rare: this only happens when $m=N=1, 2, 4, 8$.  Simply on dimensional grounds these cannot occur for other $N$.  The curious reader may find it interesting to note there is a relation between these cases and the real, complex, quaternionic, and octonionic algebras, see \cite{Baez}.
\end{example}

\section{Eigenvalues of Adinkras}
\label{sec:eigadinkra}
One of the main themes of this paper is that the signed adjacency and signed Laplacian matrix for an Adinkra, even though they are $\#V\times\#V$ matrices, have only two distinct (non-zero) eigenvalues.
Analogous to the theory of {\em strongly regular graphs}, the spectral properties of Adinkras are closely related to the fact that Adinkras are strongly regular signed graphs.

In this section, we first verify these properties and state some corollaries, before surveying and describing the connections between Adinkras and the notions of strongly regular signed graphs in the literature.

We start with a signed analogue of the equivalence between strongly regular graphs and regular graphs having two distinct non-zero eigenvalues.  Here a walk is {\em positive} if it contains an even number of negative edges, and is {\em negative} otherwise.
\begin{proposition} {\cite[Theorem 2.2]{GhasFath}} \label{prop:SRBSG}
Suppose a signed graph is $k$-regular and triangle-free.  Then the adjacency matrix $A$ has two distinct eigenvalues if and only if for every pair of non-adjacent vertices, the number of positive and negative walks of length $2$ between them is equal, in which case $A^2=kI$.

\end{proposition}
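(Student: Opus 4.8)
The plan is to analyze the entries of $A^2$ directly and connect them to the combinatorial walk-counting condition. I would start by recording what the diagonal entries of $A^2$ must be: since $(A^2)_{ii} = \sum_j A_{ij}A_{ji} = \sum_j A_{ij}^2$ counts the number of edges incident to $v_i$ (each contributing $(\pm 1)^2 = 1$), the hypothesis that the graph is $k$-regular gives $(A^2)_{ii} = k$ for every $i$. So the diagonal of $A^2$ is already $kI$ regardless of the eigenvalue assumption; the content of the proposition lies entirely in the off-diagonal entries.

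Next I would interpret the off-diagonal entries combinatorially. For $i \neq j$, we have $(A^2)_{ij} = \sum_\ell A_{i\ell}A_{\ell j}$, and each nonzero term corresponds to a length-$2$ walk $v_i \to v_\ell \to v_j$, contributing the product of the two edge signs. Since a walk of length $2$ has an even number of negative edges exactly when the two edge signs multiply to $+1$, the term contributes $+1$ for a positive walk and $-1$ for a negative walk. Hence
\[
(A^2)_{ij} = (\text{number of positive length-}2\text{ walks from } v_i \text{ to } v_j) - (\text{number of negative such walks}).
\]
Here triangle-freeness enters: when $v_i$ and $v_j$ are \emph{adjacent}, triangle-freeness forces every off-diagonal entry relevant to the eigenvalue computation to behave, but more to the point, for adjacent vertices a common neighbor $v_\ell$ would create a triangle, so $(A^2)_{ij} = 0$ automatically for adjacent $i,j$. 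Thus for both adjacent pairs (by triangle-freeness) and the remaining diagonal (by regularity) the matrix $A^2$ already agrees with $kI$, and the only entries left to control are those indexed by non-adjacent pairs.

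I would then prove the equivalence. For the ``if'' direction: assuming the positive and negative length-$2$ walk counts agree for every non-adjacent pair, the displayed formula gives $(A^2)_{ij}=0$ for all non-adjacent $i\neq j$, which combined with the adjacent and diagonal cases yields $A^2 = kI$; then $A$ satisfies $A^2 - kI = 0$, so its minimal polynomial divides $t^2 - k$, forcing the eigenvalues into $\{\sqrt{k}, -\sqrt{k}\}$, i.e.\ at most two distinct values (and exactly two, since a $k$-regular graph with $k\ge 1$ has nonzero $A$ with trace zero). For the ``only if'' direction: if $A$ has exactly two distinct eigenvalues $\lambda_1, \lambda_2$, then $(A - \lambda_1 I)(A - \lambda_2 I) = 0$ since $A$ is a real symmetric matrix and hence diagonalizable. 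Comparing diagonal entries of this product with the known diagonal $(A^2)_{ii}=k$ and the vanishing trace $\lambda_1 + \lambda_2 = \tfrac{1}{\#V}\operatorname{tr}(A) \cdot (\cdots)$ should pin down $\lambda_1 = -\lambda_2 = \sqrt{k}$, whence $A^2 = kI$, and reading off the off-diagonal entries via the walk-counting formula recovers the equality of positive and negative walk counts.

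The main obstacle I anticipate is the ``only if'' direction, specifically justifying that the two eigenvalues must be $\pm\sqrt{k}$ rather than an arbitrary pair. The cleanest route is to use $\operatorname{tr}(A) = 0$ (no loops) together with $\operatorname{tr}(A^2) = \sum_i (A^2)_{ii} = k\,\#V$: if the eigenvalues are $\lambda_1$ (multiplicity $m_1$) and $\lambda_2$ (multiplicity $m_2$), then $m_1\lambda_1 + m_2\lambda_2 = 0$ and $m_1\lambda_1^2 + m_2\lambda_2^2 = k\,\#V$. This is \emph{not} by itself enough to force $\lambda_2 = -\lambda_1$ or equal multiplicities, so the actual leverage comes from the triangle-free hypothesis: triangle-freeness makes the diagonal of $(A-\lambda_1 I)(A-\lambda_2 I)$ equal to $k - (\lambda_1+\lambda_2)\cdot 0 + \lambda_1\lambda_2$ entrywise, and setting this to zero gives $\lambda_1 \lambda_2 = -k$, while adjacency-triangle-freeness forces the constant off-diagonal structure. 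I would lean on this diagonal comparison as the crux, and I expect the careful bookkeeping of how triangle-freeness eliminates exactly the adjacent-pair and self-pair contributions to be the delicate part of writing the argument rigorously.
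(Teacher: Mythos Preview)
The paper does not supply its own proof of this proposition; it is quoted from \cite{GhasFath}, so there is no in-paper argument to compare against. Your approach is the standard one and is essentially correct, but the ``only if'' direction, which you correctly flag as the subtle part, is not quite nailed down in your sketch.

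You extract $\lambda_1\lambda_2=-k$ from the diagonal of $A^2-(\lambda_1+\lambda_2)A+\lambda_1\lambda_2 I=0$, using $(A^2)_{ii}=k$ and $A_{ii}=0$. (Note this step uses only loop-freeness, not triangle-freeness.) What your last paragraph does not make explicit is where $\lambda_1+\lambda_2=0$ comes from: it is the \emph{adjacent} off-diagonal entries that do this work. For adjacent $i\ne j$, triangle-freeness gives $(A^2)_{ij}=0$, while $A_{ij}=\pm 1$ and $I_{ij}=0$; reading the $(i,j)$ entry of the quadratic relation then yields $0=(\lambda_1+\lambda_2)(\pm 1)$, hence $\lambda_1+\lambda_2=0$. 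Combined with $\lambda_1\lambda_2=-k$ this forces $\{\lambda_1,\lambda_2\}=\{\pm\sqrt{k}\}$ and $A^2=kI$, after which the non-adjacent entries give the walk-count equality. Your phrase ``adjacency-triangle-freeness forces the constant off-diagonal structure'' gestures at this but does not carry it out; once you insert this one-line computation, the argument is complete. (Implicitly one needs at least one edge to run that step, which holds since $k\ge 1$ is forced by the two-eigenvalue hypothesis.)
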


We show that Adinkras satisfy the conditions of this Proposition.

\begin{lemma}
Let $v$ and $w$ be distinct vertices of an Adinkra.
The number of positive walks of length $2$ joining $v$ and $w$ is equal to the number of negative walks of length $2$ joining them.
\label{lem:evenodd}
\end{lemma}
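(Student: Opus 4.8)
The plan is to pair up the length-$2$ walks between $v$ and $w$ using the bicolor $4$-cycle structure, and to exploit the totally odd condition to show that the two walks in each pair have opposite signs. First I would dispose of the trivial case: since an Adinkra is bipartite, any walk of length $2$ returns to the bipartition class from which it started, so if $v$ and $w$ lie in different classes there are no length-$2$ walks at all and both counts are zero. Thus I may assume $v$ and $w$ lie in the same class.

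Next I would analyze the colors appearing in a length-$2$ walk $v \xrightarrow{i} u \xrightarrow{j} w$, and claim that $i \neq j$. Indeed, if $i = j$, then both the edge $vu$ and the edge $uw$ would be the unique color-$i$ edge at $u$, forcing $v = w$, contrary to hypothesis. Hence every length-$2$ walk between the distinct vertices $v$ and $w$ uses two \emph{distinct} colors $i$ and $j$, and so lies entirely inside the bicolor subgraph on colors $i$ and $j$. By the Adinkra axioms this subgraph is a disjoint union of $4$-cycles, so $v, u, w$ all lie in a common $ij$-cycle; this $4$-cycle has vertices $v, u, w, u'$ in cyclic order, with $v$ and $w$ the two opposite (same-class) vertices and $u, u'$ the other pair. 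One checks that the colors alternate $i, j, i, j$ around the cycle.

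The crucial observation is then a counting one. For each unordered pair of colors $\{i,j\}$, the vertex $v$ lies in exactly one $ij$-cycle (it has one edge of each color), and $w$ receives a length-$2$ walk from $v$ in these two colors precisely when $w$ is the vertex opposite $v$ in that cycle. When this occurs there are exactly two such walks, namely the two ways of traversing the $4$-cycle from $v$ to $w$ (through $u$ or through $u'$), and together they use all four edges of the cycle exactly once. I would then invoke the totally odd condition: the number of dashed edges in the $4$-cycle is odd, so the numbers of dashed edges on the two walks have opposite parity, meaning one walk is positive and the other negative. Summing over the set of color pairs $\{i,j\}$ for which $w$ is opposite $v$, every such pair contributes exactly one positive and one negative walk, while all remaining color pairs contribute none; hence the positive and negative counts are equal.

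I do not expect a serious obstacle here, as the content is entirely structural. The step requiring the most care is the bookkeeping that establishes the clean correspondence between length-$2$ walks and traversals of bicolor $4$-cycles: specifically, verifying that two distinct colors are forced and that $v$ and $w$ sit as the opposite pair of a common cycle. Once this correspondence is in place, the parity argument supplied by the totally odd condition finishes the proof immediately.
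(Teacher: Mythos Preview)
Your proof is correct and follows essentially the same route as the paper: pair each length-$2$ walk with its companion around the bicolor $4$-cycle and use the totally odd condition to force opposite signs. You simply supply more detail than the paper does (explicitly ruling out $i=j$ and the cross-bipartition case), but the pairing and parity argument are identical.
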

\begin{proof}
A walk from $v$ of length $2$ consists of edges $e_1, e_2$.  Let their colors be $c_1, c_2$, respectively.  By the bicolor cycle condition in the definition of Adinkras, there must be another walk from $w$ to $v$ with edges $e_3, e_4$ with colors $c_1, c_2$, respectively.  We can reverse that walk to a walk from $v$ to $w$ with edges $e_4, e_3$, with colors $c_2, c_1$.  By the totally odd condition, this must have opposite sign from the original walk.
This provides a pairing of walks of length $2$ from $v$ to $w$, of opposite sign.
\end{proof}

\begin{theorem}
Let $A$ be the signed adjacency matrix of an Adinkra with $N$ colors. The eigenvalues of  $A$ are $\pm\sqrt{N}$, each of multiplicity $\frac{\#V}{2}$, and
\begin{equation}
A^2=NI.\label{eqn:a2}
\end{equation}
Furthermore,
\[\det A = \begin{cases}
N^{\#V/2},&N> 1\\
(-1)^{\#V/2},&N=1.
\end{cases}\]
\label{thm:eigA}
\end{theorem}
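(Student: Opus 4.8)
The plan is to reduce the spectral statement to Proposition~\ref{prop:SRBSG} and Lemma~\ref{lem:evenodd}, and then read off the eigenvalues and multiplicities from $A^2 = NI$ by elementary trace and determinant arguments. First I would verify the hypotheses of Proposition~\ref{prop:SRBSG}. By the definition of an Adinkra each vertex is incident to exactly $N$ edges, one of each color, so the underlying signed graph is $N$-regular; and since an Adinkra is bipartite it has no odd cycles, hence no triangles. Lemma~\ref{lem:evenodd} already gives, for every pair of distinct vertices (in particular every pair of non-adjacent vertices), that the numbers of positive and negative walks of length $2$ agree. Thus Proposition~\ref{prop:SRBSG} applies with $k = N$, yielding simultaneously that $A$ has exactly two distinct eigenvalues and that $A^2 = NI$, which is \eqref{eqn:a2}.

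From $A^2 = NI$ the eigenvalue structure follows immediately. As $A$ is real symmetric its eigenvalues are real, and every eigenvalue $\lambda$ satisfies $\lambda^2 = N$; since $N \geq 1 > 0$, this forces $\lambda \in \{+\sqrt{N},\,-\sqrt{N}\}$, two genuinely distinct real values. To determine the multiplicities, let $m_+$ and $m_-$ be the multiplicities of $+\sqrt{N}$ and $-\sqrt{N}$. The signed adjacency matrix has zero diagonal (there are no loops), so $\operatorname{tr} A = 0 = (m_+ - m_-)\sqrt{N}$, whence $m_+ = m_-$; together with $m_+ + m_- = \#V$ this gives $m_+ = m_- = \tfrac{\#V}{2}$.

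The determinant is then the product of the eigenvalues:
\[
\det A = (\sqrt{N})^{\#V/2}\,(-\sqrt{N})^{\#V/2} = (-1)^{\#V/2}\,N^{\#V/2}.
\]
Here the two cases of the statement separate according to the parity of $\#V/2$. When $N \geq 2$, Corollary~\ref{cor:vmult4} ensures $\#V$ is a multiple of $4$, so $\#V/2$ is even, $(-1)^{\#V/2} = 1$, and $\det A = N^{\#V/2}$. When $N = 1$ the factor $N^{\#V/2} = 1$, so the sign survives as $\det A = (-1)^{\#V/2}$; in this regime the Adinkra is a perfect matching, $\#V$ is only even (not necessarily divisible by $4$), and $\#V/2$ can indeed be odd, so the sign is genuinely needed. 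I expect the only real subtlety to lie precisely here: the identity $\det A = (-1)^{\#V/2} N^{\#V/2}$ is forced automatically by the eigenvalues, but collapsing the sign to obtain the clean value $N^{\#V/2}$ for $N > 1$ relies entirely on invoking Corollary~\ref{cor:vmult4} to control the parity of $\#V/2$, and on recognizing that exactly this control breaks down when $N = 1$.
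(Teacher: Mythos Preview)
Your proof is correct and follows essentially the same approach as the paper: verify $N$-regularity and triangle-freeness, invoke Lemma~\ref{lem:evenodd} to feed Proposition~\ref{prop:SRBSG}, obtain $A^2=NI$, read off the eigenvalues, use the vanishing trace for the multiplicities, and then Corollary~\ref{cor:vmult4} to kill the sign when $N\ge 2$. There is no substantive difference in method or order.
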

\begin{proof}
In an Adinkra, every vertex is incident to one edge for each of the $N$ colors, so Adinkras are $N$-regular graphs.  Adinkras are bipartite, and so have no triangles.
Then Lemma~\ref{lem:evenodd} provides the remaining hypothesis for Proposition \ref{prop:SRBSG}, hence  $A^2=NI$, and the eigenvalues of $A$ are the roots of $x^2=N$, namely, $\pm\sqrt{N}$.
Since $A$ has zero diagonal terms, the trace is $0$.  As a consequence, the two eigenvalues have equal multiplicity, which is $\#V/2$.

Multiplying all eigenvalues together, we get the determinant of $A$ to be
\[\det A = (\sqrt{N})^{\#V/2}(-\sqrt{N})^{\#V/2}=(-1)^{\#V/2}N^{\#V/2}.\]
If $N\ge 2$, then according to Corollary~\ref{cor:vmult4}, $\#V$ is a multiple of 4, and this formula becomes $N^{\#V/2}$. 
\end{proof}

\begin{corollary}
Let $L$ be the signed Laplacian of an Adinkra with $N$ colors. 
The eigenvalues of $L$ are $N\pm\sqrt{N}$, each of multiplicity $\frac{\#V}{2}$, and
\begin{equation}
L^2=2NL-N(N-1)I.
\label{eqn:l2}
\end{equation}
The determinant of the signed Laplacian of an Adinkra with $N$ colors is
\[\det(L)=(N(N-1))^{\#V/2}.\]
\label{cor:eigL}
\end{corollary}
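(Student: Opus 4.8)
The plan is to reduce everything to Theorem~\ref{thm:eigA} via a single structural observation: an Adinkra is $N$-regular, so its degree matrix is $D = NI$, and therefore the signed Laplacian is the affine shift $L = D - A = NI - A$. Every statement in the corollary then follows by transporting the corresponding fact about $A$ through this relation.

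First I would record the spectral consequence. Since $L = NI - A$ is a polynomial (indeed linear) expression in $A$, it commutes with $A$ and shares its eigenvectors: if $Av = \lambda v$ then $Lv = (N-\lambda)v$. By Theorem~\ref{thm:eigA} the eigenvalues of $A$ are $\pm\sqrt{N}$, each of multiplicity $\#V/2$, so the eigenvalues of $L$ are $N - (\pm\sqrt{N}) = N \pm \sqrt{N}$, each of multiplicity $\#V/2$, as claimed. For the quadratic relation, I would simply square $L = NI - A$ to obtain $L^2 = N^2 I - 2NA + A^2$, substitute $A^2 = NI$ from~(\ref{eqn:a2}), and then eliminate $A$ using $A = NI - L$. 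This gives $L^2 = (N^2 + N)I - 2N(NI - L) = 2NL - N(N-1)I$, which is exactly~(\ref{eqn:l2}). Finally, the determinant is the product of the eigenvalues,
\[
\det L = (N - \sqrt{N})^{\#V/2}(N + \sqrt{N})^{\#V/2} = \bigl((N-\sqrt{N})(N+\sqrt{N})\bigr)^{\#V/2} = (N(N-1))^{\#V/2}.
\]

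I expect no genuine obstacle here, since the corollary is a direct translation of Theorem~\ref{thm:eigA}. The only points requiring a moment of care are justifying $D = NI$ (which is precisely the $N$-regularity already used in the proof of Theorem~\ref{thm:eigA}) and the bookkeeping that re-expresses the square in terms of $L$ rather than $A$. I would also note that, unlike Theorem~\ref{thm:eigA}, no separate treatment of $N=1$ is needed: the determinant formula specializes to $\det L = 0$ when $N=1$, consistent with the vanishing eigenvalue $N - \sqrt{N} = 0$.
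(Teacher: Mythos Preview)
Your proof is correct and follows essentially the same route as the paper: both use $L = NI - A$ to transfer the eigenvalue statement, derive \eqref{eqn:l2} by combining $A^2 = NI$ with $A = NI - L$, and compute the determinant as the product of eigenvalues. The only cosmetic difference is that the paper substitutes $A = NI - L$ directly into $A^2 = NI$ rather than first expanding $L^2$, but the algebra is the same.
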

\begin{proof}
Since each vertex of an Adinkras has degree $N$, we have
\begin{equation}
    L=NI-A.
    \label{eqn:lnia}
\end{equation}
From this, if $\lambda$ is an eigenvalue for $A$, then $N-\lambda$ is an eigenvalue for $L$, of the same multiplicity.

Next, we can write (\ref{eqn:lnia}) as $A=NI-L$, and substituting into (\ref{eqn:a2}) gives us:
\[N^2I-2NL+L^2=NI,\]
which can be rewritten as (\ref{eqn:l2}).

Finally, by multiplying the eigenvalues with the corresponding multiplicities, we get that the determinant is
\[\det(L)=(N+\sqrt{N})^{\#V/2}(N-\sqrt{N})^{\#V/2}=(N(N-1))^{\#V/2}.\]
\end{proof}

If we choose an ordering of the vertices with all of the bosons first, followed by the fermions, then the signed adjacency matrix $A$ takes the following block form:
\begin{equation}\label{eqa}
A=\left[\begin{array}{c|c}
0&X\\\hline
X^T&0
\end{array}\right].
\end{equation}
Each block is a $\#V/2 \times \#V/2$ matrix.  The matrix $X$ in the upper right records the adjacency relations between bosons and fermions: its $(i,j)$ entry is $+1$ if there is a solid edge from boson $i$ to fermion $j$, $-1$ if there is a dashed edge from boson $i$ to fermion $j$, and is $0$ if there is no edge between those vertices.  In the lower left we have $X^T$ because $A$ is symmetric.

Likewise, the signed Laplacian $L$ is:
\begin{equation}\label{eq1}
L=\left[\begin{array}{c|c}
NI&-X\\\hline
-X^T&NI
\end{array}\right].
\end{equation}
Then, as \cite{Ramezani}*{Lemma~2} points out, identity (\ref{eqn:a2}) implies:
\begin{theorem}
\begin{equation}
    XX^T=X^TX=NI.
    \label{eqn:xx}
\end{equation}
\end{theorem}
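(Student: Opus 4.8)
The plan is to exploit the block structure of the signed adjacency matrix recorded in (\ref{eqa}) together with the already-established identity (\ref{eqn:a2}), namely $A^2 = NI$. Essentially all of the content is in squaring a $2\times 2$ block matrix and matching corresponding blocks; there is no new combinatorics to do here.

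First I would compute $A^2$ directly from the block form in (\ref{eqa}). Carrying out block multiplication,
\[
A^2 = \left[\begin{array}{c|c} 0 & X \\ \hline X^T & 0 \end{array}\right]\left[\begin{array}{c|c} 0 & X \\ \hline X^T & 0 \end{array}\right] = \left[\begin{array}{c|c} XX^T & 0 \\ \hline 0 & X^TX \end{array}\right],
\]
where the two off-diagonal blocks vanish automatically (they are $0\cdot 0 + X\cdot 0$ and $X^T\cdot 0 + 0\cdot X^T$), and the two diagonal blocks are $XX^T$ in the upper left and $X^TX$ in the lower right.

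Next I would invoke (\ref{eqn:a2}). Since $A^2 = NI$, and the identity $I$ appearing there is the $\#V\times\#V$ identity, its block form is block-diagonal with the $\tfrac{\#V}{2}\times\tfrac{\#V}{2}$ matrix $NI$ in each diagonal block and zeros off the diagonal. Comparing the two expressions for $A^2$ block by block, the upper-left blocks give $XX^T = NI$ and the lower-right blocks give $X^TX = NI$, where now $I$ denotes the $\tfrac{\#V}{2}\times\tfrac{\#V}{2}$ identity. This is exactly (\ref{eqn:xx}).

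The step I would flag as the "main obstacle" is in fact no obstacle at all: the entire nontrivial input was already absorbed into Theorem~\ref{thm:eigA}, which established $A^2 = NI$ using the combinatorial pairing of length-two walks from Lemma~\ref{lem:evenodd}. Once that identity is in hand, the present statement is a purely formal consequence of block matrix arithmetic. The only point requiring a small amount of care is bookkeeping the sizes of the identity matrices, which shrink from $\#V$ (for the ambient identity in $A^2=NI$) to $\tfrac{\#V}{2}$ (for the identity appearing in $XX^T = X^TX = NI$).
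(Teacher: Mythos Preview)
Your argument is correct and is exactly the approach the paper indicates: it states that identity (\ref{eqn:a2}) implies (\ref{eqn:xx}) (citing \cite{Ramezani}*{Lemma~2}), and the content of that implication is precisely the block-matrix squaring you carry out. There is nothing to add.
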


\begin{corollary}\label{cor:detX}
\[\det(X)=\pm N^{\#V/4}.\]
\end{corollary}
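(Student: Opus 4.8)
The plan is to extract the result directly from the identity $XX^T = X^TX = NI$ established in Theorem~\ref{eqn:xx}, which concerns the $\frac{\#V}{2}\times\frac{\#V}{2}$ block $X$ appearing in the boson-fermion decomposition (\ref{eqa}) of $A$. Writing $m=\frac{\#V}{2}$ for the size of $X$, the first step is to take determinants of the relation $XX^T=NI$. Using multiplicativity of the determinant together with $\det(X^T)=\det(X)$, this yields
\[
\det(X)^2=\det(XX^T)=\det(NI)=N^{m}=N^{\#V/2}.
\]
Taking square roots then gives $\det(X)=\pm N^{\#V/4}$, which is the desired conclusion.

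The only point requiring a word of justification is that the right-hand side is a genuine integer, so that the statement $\det(X)=\pm N^{\#V/4}$ is meaningful at the level of integers rather than merely of real square roots. Since $X$ has integer entries, $\det(X)\in\zz$, and its square is $N^{\#V/2}$; to conclude I would invoke Corollary~\ref{cor:vmult4}, which guarantees that for $N\ge 2$ the quantity $\#V$ is divisible by $4$, so that $\#V/4\in\zz$ and $N^{\#V/4}$ is an integer whose square is exactly $N^{\#V/2}$. The case $N=1$ is handled separately but trivially, since there $N^{\#V/4}=1$ regardless of the exponent, consistent with $\det(X)=\pm 1$ for the $1\times 1$ block arising from a single edge.

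I do not anticipate any genuine obstacle: the argument is a one-line determinant computation applied to (\ref{eqn:xx}), and the sign ambiguity $\pm$ is intrinsic and unavoidable, reflecting that the orientation/signature data of the Adinkra is not pinned down by the quadratic identity $XX^T=NI$ alone. If one wished to determine the sign in specific cases, one would need to track the parity of the permutation realizing the boson-fermion adjacency together with the product of edge signs, but the corollary as stated requires only the magnitude, so this refinement can be left aside.
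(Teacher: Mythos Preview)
Your proposal is correct and follows essentially the same approach as the paper: take determinants of the identity $XX^T=NI$, use $\det(X^T)=\det(X)$, and extract the square root. Your additional remarks on integrality via Corollary~\ref{cor:vmult4} and the $N=1$ case are not in the paper's proof but are harmless elaborations.
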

\begin{proof}
We take the determinant of (\ref{eqn:xx}) to see that
\[\det(X)\det(X^T)=N^{\#V/2}.\]
Since $\det(X)=\det(X^T)$,
\[(\det(X))^2=N^{\#V/2}\]
and the result follows.
\end{proof}

The sign of $\det(X)$ is not determined by $N$: for each $N$, there are examples with positive and negative determinant.  For instance, a single vertex switch on a boson (see Section~\ref{sec:dependsig}) will result in that row operation on $X$ that multiplies a row by $-1$, which results in reversing the sign of the determinant.

The first notion of strong regularity in the theory of signed graphs was due to Zaslavsky.
For two vertices $u,v$ in a signed graph, denote by $w_2(u,v)$ the number of positive walks of length 2 between $u,v$ minus the number of negative walks of length 2 between $u,v$.

\begin{definition} \cite{zaslavsky2}
A {\em very strongly regular signed graph} with parameters $t,k,p,\rho_0$ is a $k$-regular signed graph in which the value of $w_2(u,v)$ for any pair of adjacent (resp. non-adjacent) vertices $u,v$ is $t\mu(uv)$ (resp. $p$).
Moreover, the signed graph is $\rho_0$-signed-regular, i.e., for every vertex $v$, the number of positive edges incident to $v$ minus the number of negative edges incident to $v$ is $\rho_0$.
\end{definition}

While an Adinkra does not satisfy the signed-regular condition in general, it satisfies all other conditions with $p=t=0$ as we have seen from the above. 

Ramezani gave a more general definition of {\em strongly regular signed graphs} in \cite{Ramezani2}, where the signed-regular condition in Zaslavsky's definition is removed, and $w_2(u,v)$ is allowed to be either $p$ or $-p$ for non-adjacent vertices.
Stani\'c gave another definition of strongly regular signed graphs that generalizes Zaslavsky's definition in \cite{Stanic3}, where the signed-regular condition is again removed, and instead of using one parameter $t$ for adjacent vertices, one has two parameters $a,b$ so that $w_2(u,v)=a$ (resp. $w_2(u,v)=b$) if $u,v$ are connected by a positive (resp. negative) edge.
It is evident that Adinkras belong to both notions, with $p=t=0$ and $p=a=b=0$, respectively.

\section{Critical Groups of Adinkras}
\label{sec:criticalgroup}

Here we define the critical group of a signed graph in terms of the signed Laplacian and investigate its structure for Adinkras.  
In addition, we develop algebraic machinery to study critical groups (over $
\zz$) by considering cokernels of matrices over other rings, which is applicable beyond the case of Adinkras.  

\begin{definition}
The critical group of an Adinkra is the cokernel of the signed Laplacian for the Adinkra as a signed graph. 
\end{definition}

In the unsigned case, the graph Laplacian is singular, with the multiplicity of the zero eigenvalue equal to the number of connected components of the graph.  In studying the critical group, one often works with a reduced Laplacian or specifies that we are interested in the torsion part of the cokernel.  For Adinkras, the Laplacian will always be non-singular, except in the degenerate cases of $N=0$ and $N=1$.

We next study the form of the critical group of an Adinkra.   A number of partial results and cases are presented, along with a table of small examples---see Section~\ref{sec:table}.  
We start with the following general fact about invariant factors, see e.g.  \cite{StanleyNormal} or \cite{ChipBook}*{Chapter 4}. 
\begin{theorem} [Elementary Divisor Theorem]
Let $M$ be a matrix over a unique factorization domain such that $M$ has a Smith Normal Form ${\rm diag}(d_1,\ldots,d_n)$.
Then $d_1\ldots d_k=g_k$ (up to units), where $g_k$ is the gcd of all $k\times k$ minors of $M$.
\label{thm:edt}
\end{theorem}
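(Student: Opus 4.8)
The plan is to recognize the product $d_1\cdots d_k$ as an invariant of $M$ that can be extracted from its $k\times k$ minors, and then to identify this invariant with $g_k$. Write $R$ for the unique factorization domain. The hypothesis that $M$ admits a Smith Normal Form means there are matrices $P$ and $Q$, invertible over $R$, with $PMQ=D$, where $D=\operatorname{diag}(d_1,\ldots,d_n)$ and $d_1\mid d_2\mid\cdots\mid d_n$. Since $R$ is a UFD, greatest common divisors exist and are well defined up to units, so $g_k$ is meaningful; the entire statement is to be read up to units throughout.

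The central step is the claim that $g_k$ is unchanged (up to units) when $M$ is multiplied on either side by a matrix invertible over $R$. I would establish this using the Cauchy--Binet formula, which is valid over any commutative ring: if $M=BC$, then every $k\times k$ minor of $M$ is an $R$-linear combination of the $k\times k$ minors of $C$, with coefficients that are themselves $k\times k$ minors of $B$. Applying this first to $PM$ and then to $(PM)Q$, each $k\times k$ minor of $PMQ$ is an $R$-linear combination of $k\times k$ minors of $M$. Hence $g_k(M)$, which divides every $k\times k$ minor of $M$, divides every $k\times k$ minor of $PMQ=D$, giving $g_k(M)\mid g_k(D)$. Running the same argument on the factorization $M=P^{-1}DQ^{-1}$ yields $g_k(D)\mid g_k(M)$, and the two divisibilities combine to $g_k(M)=g_k(D)$ up to a unit.

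It then remains to compute $g_k(D)$ directly. A $k\times k$ submatrix of the diagonal matrix $D$ has nonzero determinant only when its chosen row and column index sets coincide, and in that case the minor equals the product $d_{i_1}\cdots d_{i_k}$ over the selected index set $\{i_1<\cdots<i_k\}$. Because $i_j\ge j$ for each $j$, the divisibility chain $d_1\mid\cdots\mid d_n$ gives $d_j\mid d_{i_j}$, so $d_1\cdots d_k$ divides every such product; and $d_1\cdots d_k$ is itself the minor indexed by $\{1,\ldots,k\}$. Therefore $g_k(D)=d_1\cdots d_k$, which together with the previous paragraph yields $g_k=d_1\cdots d_k$ up to units.

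I expect the Cauchy--Binet invariance to be the main obstacle, chiefly in lining up the two directions of divisibility: one must use that $P$ and $Q$ are genuinely invertible \emph{over} $R$, so that $M=P^{-1}DQ^{-1}$ is a bona fide factorization over $R$, in order to obtain $g_k(D)\mid g_k(M)$ and close the argument into an equality. The diagonal computation, by contrast, is routine once the divisibility chain $d_1\mid\cdots\mid d_n$ is invoked.
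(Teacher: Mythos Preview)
Your argument is correct and is the standard proof of the Elementary Divisor Theorem: invariance of $g_k$ under multiplication by invertible matrices via Cauchy--Binet, followed by the direct computation of $g_k$ for the diagonal matrix $D$ using the divisibility chain. The paper does not give its own proof of this statement; it is quoted as a known fact with references to the literature (\cite{StanleyNormal}, \cite{ChipBook}), so there is nothing to compare against beyond noting that your write-up supplies precisely the classical argument those references contain.
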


\subsection{Table of invariant factors: Small Examples}
\label{sec:table}
The invariant factors of various $(N,k)$-Adinkras are collected in the table below for $N \leq 8, k \leq 4$.
For each entry, the doubly even code
defining the Adinkra is given.  The invariant factors are written as
\[({y_1}^{\alpha_1},\ldots,{y_k}^{\alpha_k})\]
where $y_1\,|\,y_2\,|\,\cdots\,|\,y_k$, and the exponents indicate multiplicity.  For instance, $(1^4,6^4)$ means the invariant factors are $(1,1,1,1,6,6,6,6)$.   The number of invariant factors of $L$, counting multiplicity, is $\#V=2^{N-k}$.  In the table:\\

The doubly even code is written on the left.

The code $t^j$ means the trivial code of length $j$ consisting of $\{0\cdots 0\}$.

The codes $d_n$, $e_7$, and $e_8$, as well as direct sums of codes, are defined in Section~\ref{sec:quotient}. 

The code $h_8$ has length $8$ and has one generator:

\[\left[
\begin{array}{cccccccc}
1&1&1&1&1&1&1&1
\end{array}\right]
\]

\begin{center}
\resizebox{\textwidth}{!}{
    \begin{tabular}{c|l|l|l|l|l|}
    $N$&$k=0$&$k=1$&$k=2$&$k=3$&$k=4$\\\hline
    1&$t$: $(1,0)$\\\hline
    2&$t^2$: $(1^2,2^2)$\\\hline
    3&$t^3$: $(1^4,6^4)$\\\hline
    4&$t^4$: $(1^8,12^8)$&$d_4$: $(1^2,2^2,6^2,12^2)$\\\hline
    5&$t^5$: $(1^{16},20^{16})$&$d_4\oplus t$: $(1^8,20^8)$\\\hline
    6&$t^6$: $(1^{32},30^{32})$&$d_4\oplus t^2$: $(1^{16},30^{16})$&$d_6$: $(1^8,30^8)$\\\hline
    7&$t^7$: $(1^{64},42^{64})$& $d_4\oplus t^3$: $(1^{32},42^{32})$&
    $d_6\oplus t$: $(1^{16},42^{16})$&
    $e_7$: $(1^{8},42^{8})$\\\hline
    8&$t^8$: $(1^{128},56^{128})$ & $d_4\oplus t^4$: $(1^{64},56^{64})$& $d_6\oplus t^2$: $(1^{32},56^{32})$ & $e_7\oplus t$: $(1^{16},56^{16})$\\
    &&$h_8$: $(1^{56},2^8,28^8,56^{56})$ & $d_4\oplus d_4$: $(1^{24},2^8,28^8,56^{24})$ & $d_8$: $(1^8,2^8,28^8,56^8)$ & $e_8$: $(1^2,2^6,28^6,56^2)$
    \end{tabular}}
\end{center}

Based on these cases, we  observe:

\begin{itemize}
    \item The first invariant factor is always $1$, and the last invariant factor is always $N(N-1)$.
    \item Each invariant factor occurs with even multiplicity.
    \item The product of the $i$th invariant factor and the $(\#V-i+1)$th invariant factor is $N(N-1)$.
\end{itemize}
The second and third of these observations will follow from Theorem~\ref{thm:inv_L}, and the first observation will be proven in Theorem~\ref{thm:firstlast}.

\subsection{General structure of the Invariant Factors} \label{sec:structure_invf}

The invariant factors for the signed Laplacian of an Adinkra turn out to relate to the invariant factors of the matrix $X$ that encodes the signed adjacency relations from fermions to bosons, as described in Section \ref{sec:eigadinkra}.  We begin this section by proving a few facts about the invariant factors for $X$.  We then state and prove a formula for the invariant factors for the signed Laplacian of an Adinkra in terms of the invariant factors of $X$.  Finally, we explore the invariant factors for $X$ for a number of cases.

\begin{proposition} \label{prop:symm_invf}
The invariant factors of $X$ all divide $N$, and if the invariant factors are
\[(x_1,\ldots,x_{\#V/2})\]
then
\[(x_1,\ldots,x_{\#V/2})=(N/x_{\#V/2},\ldots,N/x_1).\]
\end{proposition}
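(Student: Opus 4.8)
The plan is to build everything on the identity $XX^{T}=NI$ established in~(\ref{eqn:xx}). This says precisely that $X$ is invertible over $\mathbb{Q}$ with $X^{-1}=\tfrac{1}{N}X^{T}$, or equivalently $X^{T}=NX^{-1}$. First I would fix a Smith Normal Form $X=UDV$, where $U$ and $V$ are unimodular (integral with integral inverses) and $D=\operatorname{diag}(x_1,\dots,x_{\#V/2})$ with $x_1\mid x_2\mid\cdots\mid x_{\#V/2}$. By Corollary~\ref{cor:detX} we have $\det X\neq 0$, so $X$ is genuinely invertible and $D$ has no zero entries, which legitimizes all the inversions that follow.

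For the divisibility claim I would rewrite $X^{T}=NX^{-1}=V^{-1}(ND^{-1})U^{-1}$ and multiply on the left by $V$ and on the right by $U$, obtaining $VX^{T}U=ND^{-1}=\operatorname{diag}(N/x_1,\dots,N/x_{\#V/2})$. The left-hand side is a product of integer matrices, hence integral, so every $N/x_i$ is an integer; that is, $x_i\mid N$ for all $i$.

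For the palindrome identity I would compute the invariant factors of $X^{T}$ in two ways. On one hand, $X^{T}=V^{T}DU^{T}$ with $V^{T}$ and $U^{T}$ unimodular, so $X^{T}$ has the same Smith Normal Form as $X$, namely $(x_1,\dots,x_{\#V/2})$. On the other hand, the factorization $X^{T}=V^{-1}(ND^{-1})U^{-1}$ exhibits $X^{T}$ as unimodularly equivalent to the diagonal matrix $\operatorname{diag}(N/x_1,\dots,N/x_{\#V/2})$; since $x_i\mid N$ and $x_i\mid x_{i+1}$ force $N/x_{i+1}\mid N/x_i$, reordering this diagonal into ascending divisibility order produces the invariant factors $(N/x_{\#V/2},\dots,N/x_1)$. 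Equating the two computations yields $(x_1,\dots,x_{\#V/2})=(N/x_{\#V/2},\dots,N/x_1)$, which is the assertion.

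This is essentially routine Smith Normal Form bookkeeping once~(\ref{eqn:xx}) is in hand, so I do not expect a genuine obstacle; the only point demanding care is the standard fact that invariant factors are preserved under transpose and under pre-/post-multiplication by unimodular matrices, combined with checking that the reordering of $\{N/x_i\}$ runs the divisibility chain in the correct direction. An alternative route via the Elementary Divisor Theorem (Theorem~\ref{thm:edt}), relating the gcd of the $k\times k$ minors of $X$ to the gcd of the complementary minors of $X^{T}=NX^{-1}$, would reach the same conclusion but is more cumbersome, so I would prefer the direct argument above.
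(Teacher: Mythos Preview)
Your proposal is correct and follows essentially the same argument as the paper: both write $X$ in Smith Normal Form, use $X^{T}=NX^{-1}$ to conclude that $X^{T}$ is unimodularly equivalent to $ND^{-1}$ (yielding divisibility of $N$ by each $x_i$), and then compare with the fact that $X^{T}$ and $X$ share the same invariant factors to obtain the palindrome relation. Your version is slightly more explicit in justifying why $X$ and $X^{T}$ have the same Smith Normal Form via the transpose factorization $X^{T}=V^{T}DU^{T}$, whereas the paper simply asserts this standard fact.
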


In other words, the invariant factors for $X$ are determined by the first $\#V/4$ invariant factors: the others are obtained by dividing these from $N$.

\begin{proof}
Let $D_1$ be the Smith Normal Form for $X$, i.e.,
\begin{equation}
    D_1=BXC,
    \label{eqn:bxc}
\end{equation}
where $B$ and $C$ are $\#V/2 \times \#V/2$ integer matrices, with integer inverses, and $D_1$ is diagonal, and where each entry on the diagonal divides the next.

By Corollary~\ref{cor:detX}, $\det(X)\not=0$, so $X$ has an inverse with rational entries.  The inverse of $D_1$ is
\[D_1^{-1}=C^{-1}X^{-1}B^{-1}.\]
We use $XX^T=NI=X^TX$, so that $X^T=NX^{-1}$, to get:
\begin{equation}
    C^{-1}X^TB^{-1}=ND_1^{-1}.
\label{eqn:cyb}
\end{equation}
Since $D_1$ is diagonal, so is $D_1^{-1}$, and since the left side is an integer matrix, so is $ND_1^{-1}$.  This proves that the invariant factors for $X$ all divide $N$.

Let $D_2=ND_1^{-1}$.  Since $D_1$ is diagonal, with diagonal entries $(x_1,\ldots,x_{\#V/2})$, we have that $D_2$ is diagonal, with diagonal entries $(N/x_1,\ldots,N/x_{\#V/2})$.  Each entry is a multiple of the next one.  On the other hand, we have (\ref{eqn:cyb}), which implies that $D_2$ is the Smith Normal form for $X^T$, except that the diagonal entries are in reverse order.  So the invariant factors of $X^T$ are
\[(N/x_{\#V/2},\ldots,N/x_1).\]
Now the Smith Normal Form for $X^T$ is the same as that for $X$.  Thus, 
\[(x_1,\ldots,x_{\#V/2})=(N/x_{\#V/2},\ldots,N/x_1).\]
\end{proof}

We are now ready to state the relationship between the invariant factors for $X$ and the invariant factors for the signed Laplacian.
\begin{theorem} \label{thm:inv_L}
If
\[(x_1,\ldots,x_{\#V/4},N/x_{\#V/4},\ldots,N/x_1)\]
are the invariant factors for $X$, then the invariant factors for the signed Laplacian are
\[(x_1,x_1,x_2,x_2,\ldots,x_{\#V/4},x_{\#V/4} ,N(N-1)/x_{\#V/4},N(N-1)/x_{\#V/4},\ldots,N(N-1)/x_1,N(N-1)/x_1).\]
In other words, we take the invariant factors for $X$, multiply each of the largest $\#V/4$ factors by $N-1$, then double the multiplicity of all invariant factors.
\end{theorem}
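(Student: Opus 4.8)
The plan is to compute the Smith Normal Form of $L$ one prime at a time. Two lists of positive integers written in divisibility order coincide if and only if, for every prime $p$, they have the same multiset of $p$-adic valuations; and that multiset is exactly the list of elementary divisors of the localization $\operatorname{coker}(L)\otimes\zz_{(p)}$. So it suffices to determine the local invariant factors of $L$ at each prime $p$ and check that they agree with the $p$-adic valuations of the claimed list. Since $\gcd(N,N-1)=1$, every prime falls into exactly one of three classes: $p\nmid N(N-1)$, $p\mid N-1$, or $p\mid N$. Throughout I will use $L=NI-A$ from \eqref{eqn:lnia}, the block shape \eqref{eqa} of $A$, and the relations \eqref{eqn:a2} and \eqref{eqn:xx}.

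The first two classes are routine. If $p\nmid N(N-1)$, then $\det L=(N(N-1))^{\#V/2}$ (Corollary~\ref{cor:eigL}) is a $p$-adic unit, so $L$ is invertible over $\zz_{(p)}$ and all its elementary divisors have valuation $0$; the claimed factors are likewise $p$-units since $x_i\mid N$ and $N(N-1)/x_i\mid N(N-1)$. If $p\mid N-1$, then $N$ is a unit in $\zz_{(p)}$, so I can run a block Schur complement over $\zz_{(p)}$: left-multiplying $L$ by $\left(\begin{smallmatrix} I&0\\ N^{-1}X^T&I\end{smallmatrix}\right)$ clears the lower-left block, using $X^TX=NI$, to produce $\left(\begin{smallmatrix} NI&-X\\ 0&(N-1)I\end{smallmatrix}\right)$, and then right-multiplying by $\left(\begin{smallmatrix} I&N^{-1}X\\ 0&I\end{smallmatrix}\right)$ clears the upper-right block. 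Hence $L$ is $\zz_{(p)}$-equivalent to $NI\oplus(N-1)I$, whose elementary divisors have valuation $0$ with multiplicity $\#V/2$ (from the $p$-unit $NI$) and $v_p(N-1)$ with multiplicity $\#V/2$. This matches the claimed list, since there $v_p(x_i)=0$ and $v_p(N(N-1)/x_i)=v_p(N-1)$.

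The third class, $p\mid N$, is the main obstacle: now both diagonal blocks $NI$ of $L$ are non-invertible over $\zz_{(p)}$, the orthogonality relation $XX^T=NI$ degenerates modulo $p$, and the Schur complement above is unavailable. I will instead exploit \eqref{eqn:a2} ring-theoretically. Because $A^2=NI$, the free module $M=\zz_{(p)}^{\#V}$ becomes a module over $R=\zz_{(p)}[t]/(t^2-N)$ with $t$ acting as $A$; then $L=NI-A$ acts as multiplication by $N-t$ and $A$ as multiplication by $t$. In $R$ one has the factorization $N-t=(t-1)t$, and $t-1$ is a unit of $R$: its norm $(t-1)(-t-1)=1-N$ is a unit of $\zz_{(p)}$ because $p\nmid N-1$. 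Consequently $(t-1)M=M$, so the image of $N-t$ is $(N-t)M=t(t-1)M=tM$, the image of $t$. Therefore $\operatorname{coker}_{\zz_{(p)}}(L)=M/(N-t)M=M/tM=\operatorname{coker}_{\zz_{(p)}}(A)$. Finally the row-block swap $\left(\begin{smallmatrix}0&I\\ I&0\end{smallmatrix}\right)$ turns $A$ into $X^T\oplus X$, so $\operatorname{coker}(A)\cong\operatorname{coker}(X)^{2}$, and the local invariant factors of $L$ at $p$ are two copies of those of $X$. By Proposition~\ref{prop:symm_invf} this is exactly the claimed multiset, because $v_p(N(N-1)/x_i)=v_p(N)-v_p(x_i)=v_p(N/x_i)$ when $p\nmid N-1$.

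To finish, I will observe that the claimed list is genuinely in divisibility order: the only nonautomatic step, $x_{\#V/4}\mid N(N-1)/x_{\#V/4}$, follows from $x_{\#V/4}^2\mid N$, which Proposition~\ref{prop:symm_invf} yields (its symmetry gives $x_{\#V/4}x_{\#V/4+1}=N$ with $x_{\#V/4}\mid x_{\#V/4+1}$). Since the claimed list is therefore a valid invariant-factor sequence whose $p$-adic valuations match the local Smith Normal Form of $L$ for every prime $p$, it must equal the Smith Normal Form of $L$. I expect essentially all the difficulty to sit in the case $p\mid N$, where the degeneracy of $XX^T=NI$ modulo $p$ forces the ring-theoretic detour through $R=\zz_{(p)}[t]/(t^2-N)$ in place of a direct elimination.
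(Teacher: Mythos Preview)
Your proof is correct and takes a genuinely different route from the paper's. The paper works globally over $\zz$: using the Smith Normal Form $D_1=BXC$ of $X$ and the companion $D_2=ND_1^{-1}=C^{-1}X^TB^{-1}$, it conjugates $L$ by the block-diagonal matrices $\operatorname{diag}(B,C^{-1})$ and $\operatorname{diag}(B^{-1},C)$ to obtain $\left(\begin{smallmatrix}NI&-D_1\\-D_2&NI\end{smallmatrix}\right)$, which after permuting rows and columns splits into $\#V/2$ independent $2\times 2$ blocks $\left(\begin{smallmatrix}N&-x_i\\-N/x_i&N\end{smallmatrix}\right)$, each with invariant factors $(x_i,N(N-1)/x_i)$. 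Your argument instead localizes at each prime: the cases $p\nmid N(N-1)$ and $p\mid N-1$ are dispatched by determinant and Schur complement, and for $p\mid N$ you exploit the factorization $N-t=t(t-1)$ in $R=\zz_{(p)}[t]/(t^2-N)$ with $t-1$ a unit to conclude $\operatorname{coker}_{\zz_{(p)}}(L)=\operatorname{coker}_{\zz_{(p)}}(A)\cong\operatorname{coker}_{\zz_{(p)}}(X)^2$ directly. The paper's approach is more elementary and yields the answer in one stroke without case analysis; your approach is more conceptual, explaining \emph{why} the invariant factors of $L$ coincide with those of $A$ (hence of $X\oplus X^T$) at primes dividing $N$, and it isolates the role of $N-1$ cleanly. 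Your ring-theoretic trick at $p\mid N$ is in the same spirit as the paper's later use of polynomial lifts in Section~\ref{sec:oddprime}, so it fits the paper's philosophy well.
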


\begin{proof}
As in the proof of the previous theorem, let $B$ and $C$ be integer matrices with integer inverses, and let
\[D_1=BXC\]
be the Smith Normal Form for $X$ and let $D_2=ND_1^{-1}=C^{-1}X^TB^{-1}$.  Define
\begin{align*}
    E&=\begin{bmatrix}
    B&0\\
    0&C^{-1}
    \end{bmatrix}
    \\
    F&=\begin{bmatrix}
    B^{-1}&0\\
    0&C
    \end{bmatrix}.
\end{align*}
If $L$ is the signed Laplacian, then
\begin{align*}
ELF&=
\begin{bmatrix}
    B&0\\
    0&C^{-1}
\end{bmatrix}
\begin{bmatrix}
    NI&-X\\
    -X^T&NI
\end{bmatrix}
\begin{bmatrix}
    B^{-1}&0\\
    0&C
\end{bmatrix}\\
&=
\begin{bmatrix}
    NI&-BXC\\
    -C^{-1}X^TB^{-1}&NI
\end{bmatrix}\\
&=
\begin{bmatrix}
    NI&-D_1\\
    -D_2&NI
\end{bmatrix}.
\end{align*}
The rows and columns of $ELF$ can be rearranged so that it is a block diagonal matrix with $2\times 2$ blocks.  If $x_i$ is the $i$th diagonal entry for $D_1$, and $y_i$ is the $i$th diagonal entry for $D_2$, then the corresponding $2\times 2$ block is
\[\begin{bmatrix}
    N&-x_i\\
    -y_i&N
\end{bmatrix}.
\]
As described above, the invariant factors for $X$ are
\[(x_1,\ldots,x_{\#V/2})=(y_{\#V/2},\ldots,y_1).\]
If $i\le \#V/4$, then $x_i$ divides $y_i$; otherwise $y_i$ divides $x_i$.  Without loss of generality $i\le \#V/4$; the following comments apply when $i>\#V/4$ but with $x_i$ and $y_i$ reversed, and with the transpose of the above matrix.

Then $x_i$ divides $y_i=N/x_i$.  Swapping the two sets of columns gives
\[\begin{bmatrix}
    -x_i&N\\
    N&-N/x_i
\end{bmatrix}.
\]
Adding a multiple of the first row to the second gives
\[\begin{bmatrix}
    -x_i&N\\
    0&-N/x_i+N^2/x_i
\end{bmatrix}.
\]
Adding a multiple of the first column to the second, then multiplying the first row by $-1$, results in the diagonal matrix:
\[\begin{bmatrix}
    x_i&0\\
    0&N(N-1)/x_i
\end{bmatrix}.
\]
Thus, this $2\times 2$ block gives invariant factors $x_i$ and $N(N-1)/x_i$.  Together, for all $x_i$ with $i\le \#V/4$, this procedure provides the invariant factors
\begin{equation}
(x_1,\ldots,x_{\#V/4},N(N-1)/x_{\#V/4},\ldots,N(N-1)/x_1).
\label{eqn:firstinv}
\end{equation}
The analogous argument for $i>\#V/4$ involves $y_i$ replacing $x_i$, but this sequence of numbers is
\[(y_{\#V/4+1},\ldots,y_{\#V/2})=(x_{\#V/4},\ldots,x_1),\]
which provides the same invariant factors as (\ref{eqn:firstinv}) again.  Thus, we have double the multiplicity of the invariant factors in (\ref{eqn:firstinv}).

Thus, if the invariant factors for $X$ are
\[(x_1,x_2,\ldots,x_{\#V/4},N/x_{\#V/4},\ldots,N/x_1)\]
then the invariant factors for the signed Laplacian are
\[(x_1,x_1,x_2,x_2,\ldots,x_{\#V/4},x_{\#V/4},N(N-1)/x_{\#V/4},N(N-1)/x_{\#V/4},\ldots,N(N-1)/x_1,N(N-1)/x_1).\]
\end{proof}

\begin{theorem}\label{thm:firstlast}
The first invariant factor for $X$ is $1$, and the last invariant factor for $X$ is $N$.

The first two invariant factors for $L$ are $1$, and the last two invariant factors for $L$ are $N(N-1)$.
\end{theorem}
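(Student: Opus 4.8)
The plan is to deduce all four assertions from the Elementary Divisor Theorem (Theorem~\ref{thm:edt}) together with the structural results already proved for $X$, namely the symmetry of its invariant factors (Proposition~\ref{prop:symm_invf}) and the explicit formula relating the invariant factors of $L$ to those of $X$ (Theorem~\ref{thm:inv_L}). First I would pin down the smallest invariant factor of $X$. Applying Theorem~\ref{thm:edt} with $k=1$, the first invariant factor $x_1$ equals the gcd of all $1\times 1$ minors of $X$, i.e.\ the gcd of all its entries. Since every boson is incident to exactly $N\ge 1$ edges, one of each color, and (the graph being simple) these lead to $N$ distinct fermions, each row of $X$ contains exactly $N$ nonzero entries, each equal to $\pm 1$. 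Hence the gcd of the entries of $X$ is $1$, so $x_1 = 1$.

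Next I would obtain the largest invariant factor of $X$ directly from the symmetry in Proposition~\ref{prop:symm_invf}, which asserts $(x_1,\ldots,x_{\#V/2}) = (N/x_{\#V/2},\ldots,N/x_1)$; comparing first coordinates gives $x_1 = N/x_{\#V/2}$, hence $x_{\#V/2} = N/x_1 = N$. This settles both claims for $X$. For the signed Laplacian, I would simply feed these values into Theorem~\ref{thm:inv_L}, which presents the invariant factors of $L$ as the divisibility-ordered doubled sequence $(x_1,x_1,\ldots,N(N-1)/x_1,N(N-1)/x_1)$. Substituting $x_1 = 1$, the two smallest (first) factors are $1,1$ and the two largest (last) factors are $N(N-1)/x_1 = N(N-1)$, appearing with multiplicity two. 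Because Theorem~\ref{thm:inv_L} already lists these in increasing divisibility order, no further reordering is needed to conclude that the extreme factors of $L$ are $1$ and $N(N-1)$.

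I do not expect a serious obstacle here, as the statement is essentially a corollary of the machinery built in the preceding propositions. The only points requiring care are combinatorial rather than algebraic: verifying that the Adinkra axioms genuinely force $X$ to have $\pm 1$ entries (so that $\gcd = 1$ and not some larger divisor), and, if one wishes to be thorough, checking that the doubled sequence produced by Theorem~\ref{thm:inv_L} is a legitimate invariant-factor chain so that the extremes occur in the stated positions. The latter reduces to the divisibility $x_{\#V/4}\mid N(N-1)/x_{\#V/4}$, which holds since $x_{\#V/4}\,x_{\#V/4+1} = N$ forces $x_{\#V/4}^2 \mid N \mid N(N-1)$. Both checks are immediate given the earlier results, so the proof should be brief.
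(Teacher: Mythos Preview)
Your proposal is correct and follows essentially the same route as the paper: both argue that $x_1=1$ via the Elementary Divisor Theorem applied to the $\pm 1$ entries of $X$, and then invoke Proposition~\ref{prop:symm_invf} and Theorem~\ref{thm:inv_L} to obtain the remaining claims. Your write-up is simply more explicit than the paper's terse version, and your side-check that $x_{\#V/4}\mid N(N-1)/x_{\#V/4}$ is a nice sanity remark the paper omits.
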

\begin{proof}
The $X$ matrix, being part of a signed adjacency matrix for a simple graph, has only $0$, $1$, and $-1$ entries.  There are, in fact, edges, so there are entries of either $1$ or $-1$.

Then the fact that the first invariant factor for $X$ is $1$ follows from the Elementary Divisor Theorem (Theorem~\ref{thm:edt}).
The second claim follows from Theorem~\ref{thm:inv_L}.
\end{proof}

Based on these theorems, it suffices to look at the invariant factors for $X$ rather than the signed Laplacian.  Furthermore we can look at the first $\#V/4$ factors, because the others are $N$ divided by the first factors. 

\begin{corollary}
The last half of the invariant factors for $L$ are all even.
\label{cor:lasteven}
\end{corollary}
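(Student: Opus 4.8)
The plan is to read the last half of the invariant factors of $L$ directly off Theorem~\ref{thm:inv_L} and then show each such factor is even by a short $2$-adic valuation argument. Throughout I assume $N\ge 2$, so that $\#V$ is a multiple of $4$ by Corollary~\ref{cor:vmult4} and all the indices $\#V/2$, $\#V/4$ appearing below are integers (the cases $N\le 1$ are degenerate and can be checked by hand). Write $d_1\mid d_2\mid\cdots\mid d_{\#V}$ for the invariant factors of $L$. By Theorem~\ref{thm:inv_L} these are
\[(x_1,x_1,\ldots,x_{\#V/4},x_{\#V/4},\ N(N-1)/x_{\#V/4},N(N-1)/x_{\#V/4},\ldots,N(N-1)/x_1,N(N-1)/x_1),\]
so the last half is precisely the multiset $\{d_{\#V/2+1},\ldots,d_{\#V}\}$, consisting of the numbers $N(N-1)/x_i$ (each appearing twice). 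My goal is to show $d_j$ is even for every $j>\#V/2$.

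The key structural input is the pairing $d_i\,d_{\#V-i+1}=N(N-1)$, which is immediate from the displayed list (the $i$th and $(\#V-i+1)$th entries are $x_{\lceil i/2\rceil}$ and $N(N-1)/x_{\lceil i/2\rceil}$). Fix $j>\#V/2$ and write $j=\#V-i+1$ with $i\le \#V/2$. Since the invariant factors form a divisibility chain and $i\le \#V-i+1=j$, we have $d_i\mid d_j$; combined with $d_id_j=N(N-1)$ this gives $d_i^2\mid N(N-1)$. Hence $2\,v_2(d_i)\le v_2(N(N-1))$, where $v_2$ denotes the $2$-adic valuation, and therefore
\[v_2(d_j)=v_2(N(N-1))-v_2(d_i)\ \ge\ \tfrac12\,v_2(N(N-1)).\]
Because $N(N-1)$ is a product of consecutive integers it is even, so $v_2(N(N-1))\ge 1$ and the inequality forces $v_2(d_j)\ge\tfrac12$; as $v_2(d_j)$ is a nonnegative integer this means $v_2(d_j)\ge 1$, i.e.\ $d_j$ is even.

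The only real content is the case of even $N$: when $N$ is odd the factor $N-1$ is even while each first-half factor $x_i$ divides the odd number $N$ and is therefore odd, so $N(N-1)/x_i$ is visibly even and no valuation bookkeeping is needed. The even-$N$ case is where a first-half factor could in principle absorb all the powers of $2$ in $N(N-1)$, and the square-divisibility $d_i^2\mid N(N-1)$ (itself a consequence of the invariant-factor ordering together with the pairing from Theorem~\ref{thm:inv_L}) is exactly what rules this out; this step is the crux. One could alternatively argue straight from Proposition~\ref{prop:symm_invf}, whose ordering gives $x_{\#V/4}^2\mid N$ and hence $x_i^2\mid N$ for all $i\le\#V/4$, but the pairing formulation above is cleaner and self-contained.
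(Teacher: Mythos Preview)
Your proof is correct and is essentially the same as the paper's: both extract the structure of the last half of the invariant factors from Theorem~\ref{thm:inv_L} and use that $N(N-1)$ is even. The paper organizes the argument as a two-case split (either some first-half factor $x_i$ is even, and then the divisibility chain makes every later factor even; or all $x_i$ are odd, and then $N(N-1)/x_i$ is visibly even), whereas you fold both cases into the single inequality $2\,v_2(d_i)\le v_2(N(N-1))$ coming from $d_i^2\mid N(N-1)$—but this is only a cosmetic difference.
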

\begin{proof}
There are two cases: either the first half of the invariant factors for $L$ contains an even $x_i$ or they are all odd.

In the first case, since each invariant factor divides the next, the even number $x_i$ divides all of the last half of the invariant factors for $L$ and we are done.

In the second case, Theorem~\ref{thm:inv_L} says that the last half of the invariant factors are all of the form $N(N-1)/x_j$.  Now $N(N-1)$ is even, and since all $x_i$ are odd, $N(N-1)/x_j$ is even for all $j$.
\end{proof}

\subsection{Prisms}

\begin{theorem} \label{thm:prism}
If $A$ is a prism, then the invariant factors for $X$ are
\[(1^{\#V/4},N^{\#V/4})\]
and the invariant factors for the signed Laplacian are
\[(1^{\#V/2},(N(N-1))^{\#V/2}).\]
\end{theorem}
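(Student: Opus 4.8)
The plan is to compute the Smith Normal Form of $X$ directly, exploiting the recursive structure of the prism. Write $A = A'\times I$, where $A'$ is an Adinkra with $N-1$ colors; let $m=\#V/4$ denote the common number of bosons and fermions of $A'$, and let $X'$ be the boson-to-fermion matrix of $A'$. Then $X'$ is $m\times m$, and by the identity (\ref{eqn:xx}) applied to $A'$ we have $(X')^\top X' = X'(X')^\top = (N-1)I_m$, which will be the engine of the reduction.

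First I would establish the block form of $X$. Using the prism construction, the three edge types determine all adjacencies: the two copies of $A'$ contribute the edges of $A'$ inside the $0$-layer and inside the $1$-layer, while the new color-$N$ edges $v*$ join $(v,0)$ to $(v,1)$, dashed exactly when $v$ is a boson. Tracking the induced bipartition (where $(v,0)$ and $(v,1)$ necessarily lie in opposite parts because they are joined by the color-$N$ edge), and ordering the bosons of $A$ as $\{(v,0): v \text{ a boson of } A'\}$ followed by $\{(w,1): w \text{ a fermion of } A'\}$, with the fermions ordered dually, one checks that
\[X = \begin{bmatrix} -I_m & X' \\ (X')^\top & I_m \end{bmatrix}.\]
The diagonal blocks $-I_m$ and $I_m$ come from the color-$N$ edges, with the signs fixed by whether the base vertex is a boson or a fermion, while the off-diagonal blocks $X'$ and $(X')^\top$ record the $0$-layer and $1$-layer copies of $A'$.

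Next I would reduce this to diagonal form by integer row and column operations. Since the $(1,1)$ block is the unimodular $-I_m$, adding $(X')^\top$ times the first block-row to the second clears the $(2,1)$ block and turns the $(2,2)$ block into $I_m + (X')^\top X' = NI_m$; symmetrically, a block-column operation clears the $(1,2)$ block without disturbing anything else. After negating the first block-row this yields $\operatorname{diag}(I_m, NI_m)$, which, since $1\mid N$, is already in Smith Normal Form. Hence the invariant factors of $X$ are $(1^{\#V/4}, N^{\#V/4})$.

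Finally, feeding $(x_1,\dots,x_{\#V/4})=(1,\dots,1)$ into Theorem~\ref{thm:inv_L} doubles the multiplicity of every factor and multiplies the top half by $N-1$, producing the invariant factors $(1^{\#V/2}, (N(N-1))^{\#V/2})$ for the signed Laplacian. The only real care needed is in the first step: correctly tracking the bipartition of the prism and the signs of the color-$N$ edges so as to obtain the clean block form. Once that is in place the matrix reduction is purely formal (the relation $(X')^\top X'=(N-1)I_m$ does all the work), and the Laplacian statement follows immediately from Theorem~\ref{thm:inv_L}.
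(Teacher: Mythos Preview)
Your argument is correct. The block form you write down for $X$ is valid with the vertex ordering you specify (it differs from the paper's block form only by a permutation of rows and columns), the block row/column reduction using $(X')^\top X'=(N-1)I_m$ is clean and correct, and the final appeal to Theorem~\ref{thm:inv_L} matches the paper.

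The overall strategy---write $X$ in block form coming from the prism structure, read off that the first $\#V/4$ invariant factors are $1$, then invoke Theorem~\ref{thm:inv_L}---is the same as the paper's. The execution differs in one place: to get from the block form to the invariant factors of $X$, the paper simply notes that an $m\times m$ identity submatrix forces (via the Elementary Divisor Theorem) the first $\#V/4$ invariant factors to be $1$, and then invokes the symmetry $x_i\,x_{\#V/2-i+1}=N$ from Proposition~\ref{prop:symm_invf} to conclude the remaining ones are $N$. You instead carry out the Smith reduction explicitly, using $(X')^\top X'=(N-1)I_m$ to produce $\operatorname{diag}(I_m,NI_m)$ directly. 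Your route is slightly more hands-on but also more self-contained: it does not need Proposition~\ref{prop:symm_invf} at all, only the identity~(\ref{eqn:xx}) for the smaller Adinkra $A'$. The paper's route is shorter on the page because the symmetry has already been established.
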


\begin{proof}
If $A$ is a prism of $A'$, then $X$ for $A$ is
\[\begin{bmatrix}
    X'&-I\\
    I&X'{}^T
\end{bmatrix}
\]
where $X'$ is the $X$ matrix for $A'$.

We use Theorem~\ref{thm:edt} on the $I$ submatrix on the lower left corner to get that the first $\#V/4$ invariant factors for $X$ are $1$.

Applying Theorem~\ref{thm:inv_L}, the invariant factors for $X$ are $(1^{\#V/4},N^{\#V/4})$, and the invariant factors for $L$ are $(1^{\#V/2},(N(N-1))^{\#V/2})$.
\end{proof}

\begin{corollary} \label{coro:cubic_Adinkra}
The invariant factors for $X$ for an $N$-cube Adinkra are
\[(1^{\#V/4},N^{\#V/4})\]
and the invariant factors for the signed Laplacian of an $N$-cube are
\[(1^{\#V/2},(N(N-1))^{\#V/2}).\]
\end{corollary}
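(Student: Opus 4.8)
The plan is to recognize the $N$-cube Adinkra as a prism and then invoke Theorem~\ref{thm:prism} with essentially no further work. Recall from the discussion of hypercubes and prisms in Section~\ref{sec:adinkra} that applying the prism construction to the $(N-1)$-cube Adinkra produces exactly the $N$-cube Adinkra: writing $\{0,1\}^N=\{0,1\}^{N-1}\times\{0,1\}$ and taking the last coordinate as the prism direction, the edges of colors $1,\ldots,N-1$ are precisely the two copies of the $(N-1)$-cube's edges, while the edges of color $N$ connect the antipodal pairs $(v,0)$ and $(v,1)$. Moreover, the inductively defined totally odd signature on the $N$-cube is, by construction, the prism signature. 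Hence the $N$-cube Adinkra is a prism for every $N\ge 2$, and Theorem~\ref{thm:prism} applies directly, yielding invariant factors $(1^{\#V/4},N^{\#V/4})$ for $X$ and $(1^{\#V/2},(N(N-1))^{\#V/2})$ for the signed Laplacian.

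The only genuine content of the argument is therefore this structural observation that the cube is a prism; everything else is a verbatim appeal to Theorem~\ref{thm:prism} (which in turn rested on the Elementary Divisor Theorem applied to the identity block in the lower left of the block form of $X$, together with Proposition~\ref{prop:symm_invf} and Theorem~\ref{thm:inv_L}). I do not anticipate a substantive obstacle here, since the prism identification is immediate from the definitions; the work lies in stating it carefully rather than in any nontrivial estimate or construction.

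Two bookkeeping points should be addressed. First, the base case $N=1$ is degenerate: here $\#V=2$ and $\#V/4$ is not an integer, so the displayed formula for the invariant factors of $X$ is not meaningful, reflecting the fact that the $1$-cube would be the prism of a $0$-cube, which is excluded by our insistence that $N\ge 1$. The statement should thus be read for $N\ge 2$ (equivalently, whenever $\#V$ is a multiple of $4$, as guaranteed by Corollary~\ref{cor:vmult4}); the Laplacian formula does, however, persist formally at $N=1$, giving $(1,0)$ in agreement with the table. Second, the phrase ``an $N$-cube Adinkra'' allows any totally odd signature, whereas the prism identification fixes one specific signature. Since a single vertex switch multiplies a row or column of $X$ by $-1$ and so leaves its Smith Normal Form unchanged (cf.\ the remark following Corollary~\ref{cor:detX} and Section~\ref{sec:dependsig}), the invariant factors are independent of the choice of totally odd signature, and it suffices to verify the claim for the prism signature supplied by Theorem~\ref{thm:prism}.
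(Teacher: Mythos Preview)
Your proof is correct and follows essentially the same route as the paper: the paper's proof is the single sentence ``Cubical Adinkras of dimension $\ge 1$ are prisms (over cubical Adinkras of lower dimension),'' after which Theorem~\ref{thm:prism} gives the result. Your additional bookkeeping on the $N=1$ degeneracy and on signature independence is more careful than the paper, which simply takes ``the $N$-cube Adinkra'' to mean the one produced by the inductive prism construction of Section~\ref{sec:adinkra} and so sidesteps the signature question entirely.
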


\begin{proof}
Cubical Adinkras of dimension $\geq 1$ are prisms (over cubical Adinkras of lower dimension).
\end{proof}

\section{Invariant factors: odd primes}
\label{sec:oddprime}

The goal of this section is to prove the following theorem.  
\begin{theorem}[The Odd Prime Theorem]\label{thm:oddprime}
Let $p$ be an odd prime dividing $N$.  Then:
\begin{enumerate}
    \item $p$ does not divide any of the first $\#V/4$ invariant factors of $X$.
    \item $p$ does not divide any of the first $\#V/2$ invariant factors of $L$.
\end{enumerate}
\end{theorem}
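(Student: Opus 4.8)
The plan is to derive statement (2) from statement (1) and to recast statement (1) as a rank estimate. The number of invariant factors of $X$ coprime to $p$ equals $\operatorname{rank}_{\gf_p}(X\bmod p)$, and since these factors form a divisibility chain the ones coprime to $p$ are an initial segment; hence the first $\#V/4$ of them avoid $p$ as soon as $\operatorname{rank}_{\gf_p}(X\bmod p)\ge \#V/4$. Statement (2) is then immediate from Theorem~\ref{thm:inv_L}, which presents the first $\#V/2$ invariant factors of $L$ as the first $\#V/4$ invariant factors of $X$, each repeated twice. Note moreover that reducing \eqref{eqn:xx} modulo $p$ gives $XX^T\equiv 0$, so $\operatorname{rank}_{\gf_p}(X\bmod p)\le \#V/4$ holds automatically; the whole content is the reverse inequality, i.e. the existence of a single $(\#V/4)\times(\#V/4)$ minor of $X$ that is nonzero modulo $p$.

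To manufacture such a minor I would lift $X$ over $\zz[x_1,\dots,x_N]$, one variable per color. Writing $X=\sum_c X_c$, where $X_c$ is the signed boson--fermion matrix recording only the color-$c$ edges (a signed permutation matrix, so $X_cX_c^T=I$), set $\tilde X=\sum_c x_c X_c$. The bicolor-cycle and totally-odd conditions (the pairing of opposite-sign length-$2$ walks underlying Lemma~\ref{lem:evenodd}) give $X_cX_{c'}^T+X_{c'}X_c^T=0$ for $c\ne c'$, whence
\[
\tilde X\,\tilde X^T=\Big(\sum_c x_c^2\Big)I=:q\,I ,
\]
the polynomial analogue of \eqref{eqn:xx}; and $\tilde X$ specializes to $X$ at $x_1=\dots=x_N=1$.

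The crucial move is to replace this naive specialization by a one-parameter deformation of it. Work modulo $p$ and let $n:=\#V/2$ (even, by Corollary~\ref{cor:vmult4}, since $N\ge p\ge 3$). Apply the ring homomorphism $\psi\colon \gf_p[x_1,\dots,x_N]\to\gf_p[t]$ with $\psi(x_c)=1+\alpha_c t$, taking $\alpha_1=1$ and $\alpha_c=0$ otherwise, and put $M=\psi(\tilde X)\in M_n(\gf_p[t])$. Then $MM^T=\psi(q)\,I$ with $\psi(q)=N+2\big(\sum_c\alpha_c\big)t+\big(\sum_c\alpha_c^2\big)t^2$. Because $p\mid N$ the constant term vanishes, and because $p$ is odd the linear coefficient $2\sum_c\alpha_c=2$ is a unit; hence $\psi(q)=t\cdot(\text{unit})$ in the discrete valuation ring $\mathcal O=\gf_p[t]_{(t)}$. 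Over the PID $\mathcal O$, $M$ has Smith normal form $\operatorname{diag}(t^{a_1},\dots,t^{a_n})$ with $0\le a_1\le\cdots\le a_n$. Since $\det(M)^2=\psi(q)^n\ne 0$, $M$ is invertible over $\operatorname{Frac}(\mathcal O)$ and $M^{-1}=\psi(q)^{-1}M^T$; computing the valuations of the invariant factors of $M^{-1}$ two ways — by reversing and negating the $a_i$, and by subtracting $1$ (the valuation of $\psi(q)^{-1}$) from each $a_i$ — forces $a_i+a_{n-i+1}=1$ for every $i$. Thus exactly $n/2$ of the $a_i$ vanish, so some $(n/2)\times(n/2)$ minor of $M$ is a unit of $\mathcal O$, i.e. nonzero at $t=0$. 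But $M|_{t=0}=X\bmod p$, so this is a nonvanishing $(\#V/4)$-minor of $X$ modulo $p$, giving $\operatorname{rank}_{\gf_p}(X\bmod p)\ge \#V/4$, as required.

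The main obstacle — and the reason plain evaluation at $x=\ones$ fails — is that the naive point lies on the quadric $\{q=0\}$, where $\tilde X$ is singular and the rank can collapse below $\#V/4$ along a badly chosen degeneration. The deformation $x_c=1+\alpha_c t$ is engineered precisely so that $q$ acquires $t$-valuation exactly $1$ (this is the only step where oddness of $p$ enters, and exactly what breaks at $p=2$), which pins the Smith normal form of $M$ to $(1^{\#V/4},t^{\#V/4})$ and so bounds the rank at $t=0$ from below. Establishing the valuation identity $a_i+a_{n-i+1}=1$ — the $\mathcal O$-analogue of the symmetry that Proposition~\ref{prop:symm_invf} records for $X$ over $\zz$ — is the technical heart; the color decomposition and the passage between minors, ranks, and invariant factors are routine bookkeeping.
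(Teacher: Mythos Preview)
Your argument is correct and follows essentially the same route as the paper: lift $X$ to a one-variable polynomial matrix over $\gf_p$ by keeping a single color as a formal variable and specializing the rest to $1$ (your $t$ is the paper's $x-1$), then exploit that $\tilde X\tilde X^T$ is a scalar with $(x-1)$-valuation exactly $1$ when $p$ is odd. The only cosmetic difference is the endgame: the paper reads off the bound directly from the global factorization $\det\tilde X=\pm(x-1)^{\#V/4}(x+1)^{\#V/4}$ in $\gf_p[x]$, whereas you localize at $(t)$ and rederive the Smith-form symmetry $a_i+a_{n+1-i}=1$---equivalent bookkeeping, with the paper's version being a touch shorter.
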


First, it will be necessary to enhance the matrices somewhat to keep track of the colors of the edges.

\subsection{Laplacians with indeterminates}
\label{sec:indet}

In an Adinkra, edges come with colors.  As such, we can define $N$ adjacency matrices $A_1,\ldots,A_N$: one for each color. The $j,k$ entry for $A_i$ is:
\[(A_i)_{j,k} = \begin{cases}
\mu(e),&\mbox{if there is an edge $e$ of color $i$ from $v_j$ to $v_k$}\\
0,&\mbox{otherwise.}
\end{cases}\]
Then $A_i$ is an integer-valued symmetric matrix, and
\[A=A_1+\cdots+A_N.\]

Let $e_1,\ldots,e_{\#V}$ be the standard basis of $\re^{\#V}$.  Pick a vertex $v_k$.  By the definition of Adinkras, there is a unique edge of color $i$ incident to $v_k$.  If we let $\mu$ be the sign of that edge and suppose that the edge connects $v_k$ to $v_j$, then
\[A_i(e_k)=\mu e_j.\]

Compare the similarity to the $Q_i$ operators defined in (\ref{eqn:qb}) and (\ref{eqn:qf}).  We do not have $\sqrt{-1}$ or $\frac{d}{dt}$, but otherwise $A_i$ is like $Q_i$.

In this situation, the same edge connects $v_j$ to $v_k$, with the same sign, so
\[A_i(e_j)=\mu e_k.\]
Therefore for all $k$,
\[A_i(A_i(e_k))=e_k\]
and since the $e_k$ are arbitrary and form a basis for $\re^{\#V}$, we have
\begin{proposition}
For all $i$,
\begin{equation}
   A_i{}^2=I. \label{eqn:aialg1}
\end{equation}
For all $i\not= j$,
\begin{equation}
   A_iA_j=-A_jA_i.\label{eqn:aialg2}
\end{equation}
\end{proposition}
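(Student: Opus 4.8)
The plan is to prove both identities directly from the pointwise action of the color matrices $A_i$ on the standard basis, exactly in the spirit of the discussion preceding the statement. The key structural fact, guaranteed by the definition of an Adinkra, is that each vertex is incident to precisely one edge of each color. This means that for each fixed color $i$, the matrix $A_i$ is (up to signs) a permutation matrix realizing a perfect matching on the vertex set: it sends each basis vector $e_k$ to $\pm e_j$, where $v_j$ is the unique $i$-colored neighbor of $v_k$, and the sign is the signature $\mu$ of that edge.

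For identity~(\ref{eqn:aialg1}), I would simply formalize the computation already sketched above: if the unique color-$i$ edge at $v_k$ connects $v_k$ to $v_j$ with sign $\mu$, then $A_i(e_k)=\mu e_j$, and since the \emph{same} edge is the unique color-$i$ edge at $v_j$ with the \emph{same} sign, we get $A_i(e_j)=\mu e_k$. Hence $A_i^2(e_k)=\mu^2 e_k = e_k$ because $\mu\in\{\pm 1\}$. As the $e_k$ form a basis, $A_i^2=I$.

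For the anticommutation identity~(\ref{eqn:aialg2}), I would invoke the bicolor cycle condition and the totally odd condition. Fix distinct colors $i\neq j$ and a vertex $v_k$. The edges of colors $i$ and $j$ through $v_k$ lie in a unique bicolor $4$-cycle $v_k, v_a, v_b, v_c$, where the two length-$2$ walks $A_iA_j$ and $A_jA_i$ starting at $v_k$ both terminate at the opposite corner $v_b$. Thus both $A_iA_j(e_k)$ and $A_jA_i(e_k)$ are $\pm e_b$; the question is only the sign. The product of the four signatures around the $4$-cycle is $-1$ by the totally odd condition (an odd number of dashed edges), and one checks that the composite sign for one traversal is the negative of the composite sign for the other, so $A_iA_j(e_k)=-A_jA_i(e_k)$. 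Since $v_k$ was arbitrary, $A_iA_j=-A_jA_i$. I would also note the edge case $v_a=v_c$ cannot shorten the walk, since distinct colors at $v_k$ force distinct neighbors; the bicolor cycle is genuinely a $4$-cycle.

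The main obstacle is the sign bookkeeping in the second identity: making the phrase ``the two traversals have opposite sign'' rigorous. Concretely, if the four edges of the bicolor cycle carry signatures $s_1$ (color $i$, from $v_k$), $s_2$ (color $j$, from $v_a$ to $v_b$), $s_3$ (color $j$, from $v_k$), $s_4$ (color $i$, from $v_c$ to $v_b$), then $A_iA_j(e_k)=s_3 s_4\, e_b$ while $A_jA_i(e_k)=s_1 s_2\, e_b$, and the totally odd condition says $s_1 s_2 s_3 s_4=-1$, giving $s_1 s_2 = -s_3 s_4$ since each $s_m=\pm1$. This is precisely the sign cancellation already used in Lemma~\ref{lem:evenodd}, so I would present it as the matrix-level refinement of that pairing argument, taking care to track which edge contributes to which factor.
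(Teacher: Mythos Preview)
Your proposal is correct and follows essentially the same approach as the paper's proof: both verify the identities on each basis vector $e_k$, using the unique color-$i$ edge at each vertex for $A_i^2=I$, and the bicolor $4$-cycle together with the totally odd condition for the anticommutation. Your explicit sign bookkeeping ($s_1s_2s_3s_4=-1$ forcing $s_1s_2=-s_3s_4$) is slightly more detailed than the paper's, but the argument is the same.
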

Compare these with (\ref{eqn:susyalg1}) and (\ref{eqn:susyalg2}).
\begin{proof}
Let $v_k$ be any vertex.  If we let $v_\ell$ be the unique vertex adjacent to $v_k$ through an edge of color $i$, and $\mu$ is its sign, then
\[A_i(e_k)=\mu e_\ell.\]
Likewise, the same edge connects $v_\ell$ to $v_k$ and so
\[A_i(e_\ell)=\mu e_k.\]
Thus, for all $k$,
\[A_i(A_i(e_k))=e_k\]
and (\ref{eqn:aialg1}) more generally follows by linearity.

Let $v_m$ be the vertex so that there is an edge of color $j$ from $v_\ell$ to $v_m$.

Let $v_n$ be the vertex so that there is an edge of color $j$ from $v_k$ to $v_n$.

So there is a walk from $v_n$ to $v_k$ to $v_\ell$ to $v_m$, with colors $j$, then $i$, then $j$.  Because bicolor cycles are of length 4 in an Adinkra, there must be an edge of color $i$ from $v_n$ to $v_m$.  In other words,
\begin{align*}
A_j(A_i(e_k))&=\mu(e_k,e_\ell)\mu(e_\ell,e_m) e_m\\
A_i(A_j(e_k))&=\mu(e_k,e_n)\mu(e_n,e_m) e_m.
\end{align*}
By the totally odd property of Adinkras, these will have opposite sign, and so
\[A_j(A_i(e_k))=-A_i(A_j(e_k))\]
for all $e_k$.  Then (\ref{eqn:aialg2}) follows by linearity.
\end{proof}

Now we work in the ring $R=\zz[x_1,\ldots,x_n]$, consisting of polynomials in the formal indeterminates $x_1,\ldots,x_N$.  

Define the colored adjacency matrix $\hat{A}_R$ by
\[\hat{A}_R=\sum_{i=1}^N  A_i x_i.\]

Then the entries of $\hat{A}_R$ are:
\[(\hat{A}_R)_{ij}=\begin{cases}
x_c,&\mbox{if there is a solid edge of color $c$ from vertex $i$ to vertex $j$}\\
-x_c,&\mbox{if there is a dashed edge of color $c$ from vertex $i$ to vertex $j$}\\
0,&\mbox{if there is no edge from vertex $i$ to vertex $j$.}
\end{cases}
\]
The ordinary signed adjacency matrix is obtained by assigning $x_1=\cdots=x_N=1$.

As in Theorem~\ref{thm:eigA}, we have the following:
\begin{theorem}
\begin{equation}
    \hat{A}_R{}^2=(x_1{}^2+\cdots+x_N{}^2)I.
\label{eqn:arsquared}
\end{equation}
\label{thm:arsquared}
\end{theorem}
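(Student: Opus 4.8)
The plan is to expand $\hat{A}_R^2 = \left(\sum_{i=1}^N A_i x_i\right)^2$ and exploit the Clifford-like algebra relations established in the previous Proposition, namely $A_i^2 = I$ and $A_i A_j = -A_j A_i$ for $i \neq j$. Since the $x_i$ are central (they are scalars in the polynomial ring $R$ commuting with the integer matrices $A_i$), I would first write
\[
\hat{A}_R^2 = \sum_{i=1}^N \sum_{j=1}^N A_i A_j \, x_i x_j.
\]

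Next I would split this double sum into diagonal terms ($i = j$) and off-diagonal terms ($i \neq j$). The diagonal terms give $\sum_{i=1}^N A_i^2 x_i^2 = \sum_{i=1}^N x_i^2 I$ by $(\ref{eqn:aialg1})$. For the off-diagonal terms, I would pair each unordered pair $\{i,j\}$ with $i \neq j$ and observe that the contribution is $(A_i A_j + A_j A_i) x_i x_j$, since $x_i x_j = x_j x_i$ in $R$. By the anticommutation relation $(\ref{eqn:aialg2})$, we have $A_i A_j + A_j A_i = 0$, so every off-diagonal contribution vanishes. This leaves exactly $\hat{A}_R^2 = (x_1^2 + \cdots + x_N^2) I$, which is $(\ref{eqn:arsquared})$.

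The computation is essentially routine once the algebraic relations are in hand; the only point requiring a little care is ensuring that the indeterminates $x_i$ genuinely commute with the matrix entries so that one can freely factor $A_i A_j x_i x_j = A_i x_i A_j x_j$ and regroup. Because $\hat{A}_R$ is a matrix with entries in $R$ and each $A_i$ is an integer matrix scaled by the central indeterminate $x_i$, this commutativity holds, and the expansion of the square is valid over the (noncommutative) matrix ring $M_{\#V}(R)$ where only the matrix factors fail to commute. I do not anticipate a genuine obstacle here: this is the direct polynomial analogue of Theorem~\ref{thm:eigA}, and the proof is simply the algebraic identity $\left(\sum A_i x_i\right)^2 = \sum x_i^2 I$ forced by the generating relations of the super-Poincar\'e-type algebra. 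Setting $x_1 = \cdots = x_N = 1$ recovers $(\ref{eqn:a2})$ as a consistency check.
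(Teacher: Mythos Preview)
Your proof is correct and follows essentially the same argument as the paper: expand the square as a double sum, separate diagonal from off-diagonal terms, apply $A_i^2=I$ to the diagonal and the anticommutation $A_iA_j+A_jA_i=0$ to kill the off-diagonal pairs. The paper's write-up is slightly terser but structurally identical.
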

\begin{proof}
We use (\ref{eqn:aialg1}) and (\ref{eqn:aialg2}).
\begin{align*}
    \hat{A}_R\hat{A}_R
    &=\sum_{i=1}^N \sum_{j=1}^N A_iA_j x_ix_j\\
    &=\sum_{i=j}  A_iA_j x_ix_j + \sum_{i<j} A_iA_j x_ix_j+ \sum_{i>j} A_iA_j x_ix_j\\
    &=\sum_{i=1}^N  A_i{}^2 x_i{}^2 + \sum_{i<j} (A_iA_j+A_jA_i) x_ix_j\\
    &=\sum_{i=1}^N x_i{}^2
\end{align*}
\end{proof}
In what follows, we will be using the notation:
\begin{align*}
    \sigma&=x_1+\cdots+x_N\\
    \rho&=x_1{}^2+\cdots+x_N{}^2.
\end{align*}
The ordinary signed adjacency matrix, where $x_1=\cdots=x_N=1$, we would have $\sigma=\rho=N$.

\begin{corollary}
The operator $\hat{A}_R$ has eigenvalues in a field containing $\sqrt{\rho}$, and the eigenvalues of $\hat{A}_R$ are $\pm\sqrt{\rho}$.  These have equal multiplicity.

In addition,
\[\det(\hat{A}_R)=\begin{cases}
\rho^{\#V/4},&N>1\\
(-1)^{\#V/2}x_1{}^{\#V},&N=1.
\end{cases}
\]
\label{cor:hataeig}
\end{corollary}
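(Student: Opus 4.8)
The plan is to transcribe the proof of Theorem~\ref{thm:eigA} almost verbatim, with the scalar $N$ replaced by the polynomial $\rho = x_1^2 + \cdots + x_N^2$, but now carried out over the fraction field $K = \mathrm{Frac}(R) = \mathbb{Q}(x_1,\ldots,x_N)$ rather than over $\re$. The single structural input is Theorem~\ref{thm:arsquared}, which gives $\hat{A}_R^2 = \rho I$. This forces the minimal polynomial of $\hat{A}_R$ to divide $t^2 - \rho$. Since $\rho$ is a nonzero element of $R$, the polynomial $t^2 - \rho$ has the two distinct roots $\pm\sqrt{\rho}$ in the extension $K(\sqrt{\rho})$, and every eigenvalue of $\hat{A}_R$, being a root of the minimal polynomial, must equal $+\sqrt{\rho}$ or $-\sqrt{\rho}$. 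This establishes the first assertion, that the eigenvalues lie in a field containing $\sqrt{\rho}$ and are $\pm\sqrt{\rho}$. (In the degenerate case $N=1$ one has $\rho = x_1^2$, so $\sqrt{\rho} = x_1$ already lies in $K$, and the eigenvalues are $\pm x_1$.)

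For the multiplicities I would write the characteristic polynomial of $\hat{A}_R$ over the splitting field as $(t-\sqrt{\rho})^{m_+}(t+\sqrt{\rho})^{m_-}$ with $m_+ + m_- = \#V$. Because the underlying Adinkra graph is simple and therefore loopless, each colored matrix $A_i$ has zero diagonal, so $\hat{A}_R = \sum_i A_i x_i$ does as well, giving $\operatorname{tr}(\hat{A}_R) = 0$. Equating the trace to the sum of the eigenvalues yields $(m_+ - m_-)\sqrt{\rho} = 0$; as $\sqrt{\rho}\neq 0$, we obtain $m_+ = m_-$, hence each multiplicity equals $\#V/2$.

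The determinant is the product of the eigenvalues taken with multiplicity, namely $(\sqrt{\rho})^{\#V/2}(-\sqrt{\rho})^{\#V/2} = (-1)^{\#V/2}\rho^{\#V/2}$, exactly the computation appearing in Theorem~\ref{thm:eigA}. When $N > 1$, Corollary~\ref{cor:vmult4} gives $4 \mid \#V$, so $\#V/2$ is even, the sign is trivial, and the determinant is $\rho^{\#V/2}$; as a sanity check this specializes under $x_1 = \cdots = x_N = 1$ to $\det A = N^{\#V/2}$, matching Theorem~\ref{thm:eigA}. When $N = 1$, one substitutes $\sqrt{\rho} = x_1$, and since $\#V$ may then be only $\equiv 2 \pmod 4$ the factor $(-1)^{\#V/2}$ persists, giving $(-1)^{\#V/2}x_1^{\#V}$.

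The one genuine point of care — and the main obstacle compared with the scalar setting — is that $\hat{A}_R$ has entries in the polynomial ring $R$, which is not a field, so ``eigenvalue'' and ``multiplicity'' are not meaningful for it as stated. The resolution is to base-change to the fraction field $K$ and then to the splitting field $K(\sqrt{\rho})$ of $t^2 - \rho$, and to interpret all spectral claims there. One must also verify that $\rho \neq 0$, so that $\sqrt{\rho}$ is nonzero and the two eigenvalues are genuinely distinct; this is immediate because the $x_i$ are algebraically independent indeterminates, whence $\rho = x_1^2 + \cdots + x_N^2$ is a nonzero (indeed squarefree-radical) polynomial. Once this base change is in place, every remaining step is a direct transcription of the argument for Theorem~\ref{thm:eigA} with $N$ replaced by $\rho$.
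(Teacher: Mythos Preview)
Your argument is correct and is exactly the paper's approach: the paper's entire proof is the single sentence ``The proof is identical to that of Theorem~\ref{thm:eigA} but using Theorem~\ref{thm:arsquared},'' and you have carried out precisely that transcription, with the added care of passing to the fraction field and its quadratic extension. Note that your determinant computation correctly yields $(-1)^{\#V/2}\rho^{\#V/2}$, hence $\rho^{\#V/2}$ for $N>1$; the exponent $\#V/4$ in the displayed statement is a typo in the paper, as your specialization check against Theorem~\ref{thm:eigA} confirms and as the exponent $\#V/2$ used immediately afterward in Corollary~\ref{cor:hatleig} corroborates.
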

\begin{proof}
The proof is identical to that of Theorem~\ref{thm:eigA} but using Theorem~\ref{eqn:arsquared}.
\end{proof}
Define the colored Laplacian $\hat{L}_R$ to be
\[\hat{L}_R=\sigma I-\hat{A}_R\]

\begin{corollary}
For an Adinkra with $N$ colors, the operator $\hat{L}_R$ has eigenvalues in a field containing $\sqrt{\rho}$, and the eigenvalues are $\sigma\pm\sqrt{\rho}$.  These have equal multiplicity.  In addition,

\begin{align}
    \det(\hat{L}_R)&=(\sigma^2-\rho)^{\#V/2}\label{eqn:dethat1}\\
&=(\sum_{1\leq i, j, \leq N}x_ix_j)^{\#V/2}.\label{eqn:dethat2}
\end{align}

\label{cor:hatleig}
\end{corollary}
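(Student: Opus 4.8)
The plan is to mirror the proof of Corollary~\ref{cor:eigL}, with the pair $(\hat{A}_R,\sigma)$ playing the role that $(A,N)$ plays there, and to read off every spectral statement from Corollary~\ref{cor:hataeig}. First I would pass to the field $K=\mathrm{Frac}(R)[\sqrt{\rho}]$, the fraction field of $R=\zz[x_1,\ldots,x_N]$ with a square root of $\rho$ adjoined, so that the eigenvalue arguments take place over an actual field. Since $\hat{L}_R=\sigma I-\hat{A}_R$ by definition, any eigenvector $v$ of $\hat{A}_R$ with $\hat{A}_R v=\lambda v$ satisfies $\hat{L}_R v=(\sigma-\lambda)v$; hence the eigenvalues of $\hat{L}_R$ are exactly $\sigma$ minus those of $\hat{A}_R$, with unchanged multiplicities. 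Corollary~\ref{cor:hataeig} supplies that $\hat{A}_R$ has the two eigenvalues $\pm\sqrt{\rho}$, each of multiplicity $\#V/2$, so $\hat{L}_R$ has eigenvalues $\sigma\mp\sqrt{\rho}$, that is, $\sigma+\sqrt{\rho}$ and $\sigma-\sqrt{\rho}$, each of multiplicity $\#V/2$. This proves the first assertion.

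For the determinant I would multiply the eigenvalues together with their multiplicities:
\[
\det(\hat{L}_R)=(\sigma+\sqrt{\rho})^{\#V/2}(\sigma-\sqrt{\rho})^{\#V/2}=(\sigma^2-\rho)^{\#V/2},
\]
which is (\ref{eqn:dethat1}). The alternative form (\ref{eqn:dethat2}) then follows from the elementary expansion $\sigma^2-\rho=\bigl(\sum_i x_i\bigr)^2-\sum_i x_i^2=\sum_{1\le i\neq j\le N}x_ix_j$, the sum running over the off-diagonal products.

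The one point I would spell out carefully is the descent from $K$ back to $R$: the spectral computation (diagonalizability of $\hat{A}_R$, forced by $\hat{A}_R^2=\rho I$ having the separable minimal polynomial dividing $t^2-\rho$, together with the vanishing trace that equalizes the two multiplicities) genuinely needs the field $K$, but $\det(\hat{L}_R)$ is itself an element of $R$. Since it coincides with $(\sigma^2-\rho)^{\#V/2}\in R$ as elements of $K\supseteq R$, the identity already holds in $R$, with the auxiliary $\sqrt{\rho}$ cancelling as it must. I do not expect a genuine obstacle here, as the corollary is the formal polynomial analogue of Corollary~\ref{cor:eigL} and every spectral ingredient is handed over by Corollary~\ref{cor:hataeig}; the only care required is the bookkeeping of multiplicities and this descent of the determinant identity from $K$ to the polynomial ring.
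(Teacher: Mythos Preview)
Your proposal is correct and follows essentially the same line as the paper: relate the spectrum of $\hat{L}_R=\sigma I-\hat{A}_R$ to that of $\hat{A}_R$ via Corollary~\ref{cor:hataeig}, then multiply eigenvalues to get the determinant and expand $\sigma^2-\rho$. The paper does not spell out the passage to a field containing $\sqrt{\rho}$ or the descent of the determinant identity back to $R$; your added care there is harmless extra rigor, not a different method.
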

\begin{proof}
If $\lambda$ is an eigenvalue for $\hat{A}_R$, then $\sigma-\lambda$ is an eigenvalue of $\hat{L}_R$, of the same multiplicity.  Thus by Corollary~\ref{cor:hataeig}, the eigenvalues are $\sigma\pm\sqrt{\rho}$.

Multiplying the eigenvalues with the corresponding multiplicities gives
\[\det(\hat{L}_R)=(\sigma+\sqrt{\rho})^{\#V/2}(\sigma-\sqrt{\rho})^{\#V/2}=(\sigma^2-\rho)^{\#V/2}.\]
The second form (\ref{eqn:dethat2}) comes from multiplying out $\sigma^2$ and subtracting $\rho$.
\end{proof}
\begin{theorem}
For an Adinkra with $N$ colors,
\[\hat{L}_R{}^2-2\sigma \hat{L}_R+(\sigma^2-\rho)I=0.\]
\end{theorem}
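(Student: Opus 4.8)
The plan is to reduce the claim to the quadratic identity $\hat{A}_R{}^2 = \rho I$ already established in Theorem~\ref{thm:arsquared}, exactly mirroring how Corollary~\ref{cor:eigL} derived (\ref{eqn:l2}) from (\ref{eqn:a2}). The starting point is the defining relation $\hat{L}_R = \sigma I - \hat{A}_R$, which I rearrange to express the colored adjacency operator in terms of the colored Laplacian as $\hat{A}_R = \sigma I - \hat{L}_R$.

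First I would substitute this expression into (\ref{eqn:arsquared}), obtaining $(\sigma I - \hat{L}_R)^2 = \rho I$. The one point worth noting is that $\sigma = x_1 + \cdots + x_N$ is a scalar in the commutative ring $R = \zz[x_1,\ldots,x_N]$, so $\sigma I$ is a scalar multiple of the identity and commutes with $\hat{L}_R$; this legitimizes the ordinary binomial expansion $(\sigma I - \hat{L}_R)^2 = \sigma^2 I - 2\sigma \hat{L}_R + \hat{L}_R{}^2$. Setting this equal to $\rho I$ and moving $\rho I$ to the left gives
\[
\hat{L}_R{}^2 - 2\sigma \hat{L}_R + (\sigma^2 - \rho)I = 0,
\]
which is precisely the stated identity.

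There is no real obstacle here: the entire argument is a single substitution followed by an expansion valid because $\sigma I$ is central. The only thing to be careful about is invoking Theorem~\ref{thm:arsquared} over the polynomial ring rather than its specialization $A^2 = NI$, and recording that the commutativity of $\sigma I$ with $\hat{L}_R$ is automatic. Indeed, specializing $x_1 = \cdots = x_N = 1$ recovers $\sigma = \rho = N$ and returns the earlier relation (\ref{eqn:l2}), which serves as a useful consistency check on the computation.
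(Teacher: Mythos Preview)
Your proposal is correct and follows essentially the same approach as the paper: both arguments use the definition $\hat{L}_R=\sigma I-\hat{A}_R$ together with Theorem~\ref{thm:arsquared} ($\hat{A}_R{}^2=\rho I$) and a binomial expansion. The only cosmetic difference is the order of operations---you substitute $\hat{A}_R=\sigma I-\hat{L}_R$ into $\hat{A}_R{}^2=\rho I$ and then expand, while the paper expands $\hat{L}_R{}^2=(\sigma I-\hat{A}_R)^2$ first and substitutes afterward.
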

\begin{proof}
We compute:
\begin{align*}
    \hat{L}_R{}^2&=(\sigma I - \hat{A}_R)^2\\
    &=\sigma^2I-2\sigma \hat{A}_R+\hat{A}_R{}^2\\
    &=(\rho+\sigma^2)I-2\sigma \hat{A}_R\\
    &=(\rho+\sigma^2)I-2\sigma (\sigma I - \hat{L}_R)\\
    &=(\rho-\sigma^2)I+2\sigma \hat{L}_R\\
\end{align*}
\end{proof}

If the order of the vertices has all the bosons first, followed by all of the fermions, then $\hat{A}_R$ has the form:
\begin{equation}
\hat{A}_R=\begin{bmatrix}
0&-\hat{X}_R\\
-\hat{X}_R{}^T&0
    \end{bmatrix}.
\label{eqn:xrmatrix}
\end{equation}

\begin{theorem}\label{thm:chroXdet}
If $\hat{X}_R$ is defined as above, then
\begin{align*}
\hat{X}_R{}^T\hat{X}_R&=\rho I\\
\hat{X}_R\hat{X}_R{}^T&=\rho I
\end{align*}
and
\[\det \hat{X}_R=\pm \rho^{\#V/4}.\]
\end{theorem}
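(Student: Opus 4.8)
The plan is to mirror exactly the strategy used in the unindexed case (Theorem \ref{thm:eigA} and its block-form consequence (\ref{eqn:xx})), but now over the polynomial ring $R=\zz[x_1,\ldots,x_N]$. The essential input is already available: Theorem \ref{thm:arsquared} gives $\hat{A}_R{}^2=\rho I$. So the main task is to translate this single matrix identity into the two statements about $\hat{X}_R$ and then extract the determinant.

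First I would substitute the block form (\ref{eqn:xrmatrix}) into $\hat{A}_R{}^2=\rho I$ and multiply out the block matrices. Writing
\[
\hat{A}_R{}^2=
\begin{bmatrix}
0 & -\hat{X}_R\\
-\hat{X}_R{}^T & 0
\end{bmatrix}
\begin{bmatrix}
0 & -\hat{X}_R\\
-\hat{X}_R{}^T & 0
\end{bmatrix}
=
\begin{bmatrix}
\hat{X}_R\hat{X}_R{}^T & 0\\
0 & \hat{X}_R{}^T\hat{X}_R
\end{bmatrix},
\]
and comparing with $\rho I$ block-by-block immediately yields both $\hat{X}_R\hat{X}_R{}^T=\rho I$ and $\hat{X}_R{}^T\hat{X}_R=\rho I$. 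This is the whole content of the first two displayed equations, and it is a routine block computation with no obstacle.

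For the determinant, I would take $\det$ of the identity $\hat{X}_R{}^T\hat{X}_R=\rho I$. Since $\hat{X}_R$ is a $(\#V/2)\times(\#V/2)$ matrix, $\det(\rho I)=\rho^{\#V/2}$, and $\det(\hat{X}_R{}^T)=\det(\hat{X}_R)$, so $(\det\hat{X}_R)^2=\rho^{\#V/2}$. Because $R$ is a UFD (being a polynomial ring over $\zz$) and $\rho^{\#V/2}$ has a square root $\rho^{\#V/4}$ in $R$ once we know $\#V$ is divisible by $4$ (Corollary \ref{cor:vmult4}, valid for $N\geq 2$), taking square roots gives $\det\hat{X}_R=\pm\rho^{\#V/4}$. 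This mirrors the proof of Corollary \ref{cor:detX} verbatim.

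The only genuine subtlety I anticipate is the square-root step: extracting $\det\hat{X}_R$ from $(\det\hat{X}_R)^2=\rho^{\#V/2}$ requires that $\rho^{\#V/2}$ be a perfect square in $R$ and that its square root be $\pm\rho^{\#V/4}$. This is clean precisely when $4\mid\#V$, which Corollary \ref{cor:vmult4} guarantees for $N\geq 2$; the degenerate cases $N=0,1$ would need separate comment, but since $\det\hat{A}_R$ for $N=1$ was already treated explicitly in Corollary \ref{cor:hataeig}, I expect the theorem to be stated and proved for the generic $N\geq 2$ regime. Beyond that, everything is formal manipulation, so I do not foresee any hard combinatorial or algebraic obstacle — the work was already done in establishing $\hat{A}_R{}^2=\rho I$.
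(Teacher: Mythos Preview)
Your proposal is correct and follows essentially the same route as the paper: square the block form (\ref{eqn:xrmatrix}) using Theorem~\ref{thm:arsquared} to get the two identities, then take determinants to extract $\det\hat{X}_R$. If anything, you are more careful than the paper in justifying the square-root step via $4\mid\#V$; the paper's proof simply asserts the determinant formula from the first two equations without further comment.
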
    
\begin{proof}
The first two equations are obtained by squaring (\ref{eqn:xrmatrix}) and using Theorem~\ref{thm:arsquared}.  Taking the determinant of either of these first two equations gives the third.
\end{proof}

\subsection{The Odd Primes Theorem} \label{subsec:oddprime}
Take the matrix $\hat{X}_R$ in $\zz[x_1,\ldots,x_N]$ from the previous section, and set $x_1=x,x_2=\ldots=x_N=1$.\footnote{The arbitrariness of the choice of color here is not obvious, because in general the color classes are not symmetric.}  The resulting matrix is defined over $\zz[x]$ and which we denote $\hat{X}$.  The original $X$ matrix is obtained by setting $x=1$.  Likewise we have $\hat{L}$ as the colored signed Laplacian with $x_1=x$ and $x_2=\ldots=x_N=1$.

We can take the entries of $\hat{X}$ modulo a prime $p$ and obtain a matrix $\tilde{X}$ over $\gf_p[x]$.
Note that setting $x=1$ in $\tilde{X}$ results in the matrix $\overline{X}$ obtained from taking the entries of $X$ modulo $p$.  Likewise we take $\hat{L}$ modulo $p$ to get $\tilde{L}$, and set $x=1$ with $\tilde{L}$ to get $\overline{L}$.

The rings and matrices we consider in this section can be summarized as follows:

\begin{equation} \label{eq:CD}
    \begin{CD}
\zz[x]     @>x\mapsto 1>>  \zz\\
@VVV        @VVV\\
\gf_p[x]     @>x\mapsto 1>>  \gf_p
\end{CD}
\ \ ,\ \ 
\begin{CD}
\hat{X}     @>>>  X\\
@VVV        @VVV\\
\tilde{X}     @>>>  \overline{X}
\end{CD}
\ \ ,\ \ 
\begin{CD}
\hat{L}     @>>>  L\\
@VVV        @VVV\\
\tilde{L}     @>>>  \overline{L}
\end{CD}
\end{equation}

We start with a simple observation that computing the  Smith Normal Form commutes with ring homomorphisms.

\begin{lemma} \label{lem:ringSNF}
Let $\phi:R\rightarrow S$ be a ring homomorphism. Let $M$ be a matrix over $R$ whose Smith Normal Form is $M^{SNF}$. Then $\phi(M^{SNF})$ is a Smith Normal Form of $\phi(M)$ over $S$.
\end{lemma}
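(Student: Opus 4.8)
The plan is to unwind the definition of the Smith Normal Form as an equivalence under unimodular (invertible over the base ring) matrices, and then observe that every ingredient of that definition is preserved by a ring homomorphism applied entrywise. Recall that $M^{SNF}$ being a Smith Normal Form of an $m\times n$ matrix $M$ over $R$ means there exist matrices $U\in GL_m(R)$ and $V\in GL_n(R)$ (square matrices over $R$ with two-sided inverses over $R$) such that $M^{SNF}=UMV$, where $M^{SNF}=\mathrm{diag}(d_1,\ldots)$ is (rectangular) diagonal and satisfies the divisibility chain $d_1\mid d_2\mid\cdots$.

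First I would extend $\phi$ to a map on matrices by applying it entrywise; since $\phi$ is a ring homomorphism, this entrywise map is multiplicative, $\phi(PQ)=\phi(P)\phi(Q)$, and unital, $\phi(I)=I$. Applying it to the relation $M^{SNF}=UMV$ gives $\phi(M^{SNF})=\phi(U)\,\phi(M)\,\phi(V)$. Next I would check that $\phi(U)$ and $\phi(V)$ are invertible over $S$: from $UU^{-1}=U^{-1}U=I$ we get $\phi(U)\phi(U^{-1})=\phi(U^{-1})\phi(U)=I$, so $\phi(U)^{-1}=\phi(U^{-1})$ exists over $S$, and likewise for $V$. Hence $\phi(M^{SNF})$ is unimodularly equivalent to $\phi(M)$ over $S$. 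It then remains to verify that $\phi(M^{SNF})$ has the correct shape: it is diagonal because $\phi(0)=0$ kills the off-diagonal entries, and the divisibility chain survives because $d_{i+1}=d_i r$ for some $r\in R$ forces $\phi(d_{i+1})=\phi(d_i)\phi(r)$, i.e.\ $\phi(d_i)\mid\phi(d_{i+1})$ in $S$. Together these facts say precisely that $\phi(M^{SNF})$ is a Smith Normal Form of $\phi(M)$.

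There is essentially no deep obstacle here; the proof is a formal verification, and the ``hard part'' is merely bookkeeping. The two points demanding care are that one must insist $U$ and $V$ be invertible \emph{over $R$} (not merely over a fraction field) so that their images remain invertible over $S$, and that one must use unitality of $\phi$ so that $\phi(I)=I$. I would also phrase the conclusion as ``\emph{a} Smith Normal Form,'' since the form is canonical only up to units and $\phi$ need not carry units to units; the statement asserts only that the image is one valid Smith Normal Form for $\phi(M)$, which is exactly what is needed to push invariant-factor computations down the vertical arrows in the diagram~\eqref{eq:CD}.
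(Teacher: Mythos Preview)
Your proof is correct and follows essentially the same approach as the paper: apply $\phi$ entrywise to the equation $M^{SNF}=UMV$, observe that the transforming matrices remain invertible over $S$, and note that the diagonal shape and divisibility chain are preserved. The only cosmetic difference is that the paper phrases invertibility via ``determinant is a unit'' rather than ``has a two-sided inverse,'' but these are equivalent here and the argument is the same.
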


\begin{proof}
Let $M^{SNF}=BMC$ where $B,C$ are matrices over $R$ whose determinants are units in $R$. Then we have $\phi(M^{SNF})=\phi(B)\phi(M)\phi(C)$.
Now $\det\phi(B),\det\phi(C)$ are units in $S$ and $\phi(M^{SNF})$ is a diagonal matrix satisfying the divisibility condition for diagonal entries, so it is a Smith Normal Form for $\phi(M)$.
\end{proof}

We study the Smith Normal Form of $\tilde{L}$ with respect to a prime, and compare it to the Smith Normal Form of $L$ via $\overline{L}$. The key idea is the following.

\begin{lemma} \label{lem:XvsTildeX}
Let $p$ be a prime. The following are all equal:
\begin{itemize}
    \item The number of invariant factors of $X$ divisible by $p$
    \item The number of invariant factors of $\tilde{X}$ divisible by $x-1$.
    \item The number of invariant factors of $\overline{X}$ that are $0$
\end{itemize}
Likewise, the following are all equal:
\begin{itemize}
    \item The number of invariant factors of $L$ divisible by $p$
    \item The number of invariant factors of $\tilde{L}$ divisible by $x-1$.
    \item The number of invariant factors of $\overline{L}$ that are $0$
\end{itemize}
\end{lemma}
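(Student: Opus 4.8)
The plan is to route all three quantities through a single invariant of the field matrix $\overline{X}$ (resp.\ $\overline{L}$), namely its corank over $\gf_p$, which equals the number of its zero invariant factors. Both of the other matrices in the square~\eqref{eq:CD} map onto $\overline{X}$: the integer matrix $X$ maps to $\overline{X}$ under reduction mod $p$, and $\tilde{X}$ maps to $\overline{X}$ under the evaluation $x\mapsto 1$. So I would apply Lemma~\ref{lem:ringSNF} along each of these two homomorphisms and read off how the invariant factors specialize. Throughout I would note that invariant factors are defined only up to units, but that both ``being zero over $\gf_p$'' and ``being divisible by the prime element $x-1$ in $\gf_p[x]$'' are unit-invariant conditions, so each of the three counts is well defined at the level of diagonal entries.

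First I would treat the first and third bullets. Applying Lemma~\ref{lem:ringSNF} to $\phi\colon\zz\to\gf_p$ (reduction mod $p$), if $d_1\mid\cdots\mid d_{\#V/2}$ are the invariant factors of $X$, then $\overline{d_1},\ldots,\overline{d_{\#V/2}}$ is a Smith Normal Form of $\overline{X}$. Over the field each $\overline{d_i}$ is either a unit or $0$, and by the divisibility chain the zeros form a terminal block; the entry $\overline{d_i}$ vanishes exactly when $p\mid d_i$. Hence the number of invariant factors of $X$ divisible by $p$ equals the number of zero invariant factors of $\overline{X}$. Next I would apply Lemma~\ref{lem:ringSNF} to $\psi\colon\gf_p[x]\to\gf_p$, $x\mapsto 1$: if $f_1(x)\mid\cdots\mid f_{\#V/2}(x)$ are the invariant factors of $\tilde{X}$ over the PID $\gf_p[x]$, then $f_1(1),\ldots,f_{\#V/2}(1)$ is a Smith Normal Form of $\overline{X}$, and by the factor theorem $f_i(1)=0$ if and only if $(x-1)\mid f_i(x)$. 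Thus the number of invariant factors of $\tilde{X}$ divisible by $x-1$ also equals the number of zero invariant factors of $\overline{X}$. Combining the two equalities proves the first list, and the verbatim argument with $L,\tilde{L},\overline{L}$ replacing $X,\tilde{X},\overline{X}$ proves the second.

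The argument is short because Lemma~\ref{lem:ringSNF} carries the entire load; I do not anticipate a genuine obstacle, only one point requiring care. A reduced Smith Normal Form over a field (or after evaluation) need not be the normalized diagonal $(1,\ldots,1,0,\ldots,0)$, so I must compare the \emph{counts} of zero entries rather than matching diagonal matrices term by term. This is legitimate precisely because the divisibility $d_i\mid d_{i+1}$ (resp.\ $f_i\mid f_{i+1}$) forces the vanishing entries into a terminal block whose size is the corank. As a reassurance that divisibility by $x-1$ is the right condition on $\tilde{X}$, I would note that $\det\hat{X}_R=\pm\rho^{\#V/4}$ from Theorem~\ref{thm:chroXdet} specializes to a nonzero polynomial, so $\tilde{X}$ is nonsingular over $\gf_p(x)$ and every $f_i$ is a nonzero polynomial; in any case the two sides are counted consistently even if some $f_i$ were to vanish, since $(x-1)\mid 0$ and $f_i(1)=0$ agree there.
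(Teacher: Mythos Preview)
Your proposal is correct and follows exactly the paper's approach: the paper's one-line proof simply says ``Applying the respective morphisms to $\gf_p$, the first two numbers are both equal to the third,'' and you have spelled out precisely what this means, invoking Lemma~\ref{lem:ringSNF} along the two arrows $\zz\to\gf_p$ and $\gf_p[x]\to\gf_p$ in the square~\eqref{eq:CD}. Your care about unit-invariance and the terminal-block structure of the zero entries is well placed but more detail than the paper chose to record.
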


\begin{proof}
Applying the respective morphisms to $\gf_p$, the first two numbers are both equal to the third.
\end{proof}

The quantity in the above lemma is often known as the {\em $p$-corank} of the matrix $X$, as it is $n$ minus (the rank of $X$ modulo $p$).
Equivalently it is the corank of $\overline{X}$.

\begin{theorem}[The Odd Prime Theorem]\label{thm:oddprime2}
Let $p$ be an odd prime dividing $N$.  Then:
\begin{enumerate}
    \item $p$ does not divide any of the first $\#V/4$ invariant factors of $X$.
    \item $p$ does not divide any of the first $\#V/2$ invariant factors of $L$.
\end{enumerate}
\end{theorem}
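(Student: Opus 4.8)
The plan is to exploit the variable-lift $\tilde X$ over $\gf_p[x]$, whose determinant factors in a way that isolates the contribution of the prime $p$. Recall from Theorem~\ref{thm:chroXdet} (specialized to $x_1=x$, $x_2=\cdots=x_N=1$, so that $\rho=x^2+(N-1)$) that $\det\hat X=\pm\rho^{\#V/4}=\pm(x^2+N-1)^{\#V/4}$. First I would reduce this modulo $p$. Since $p\mid N$, we have $N-1\equiv -1\pmod p$, so over $\gf_p[x]$,
\[
\det\tilde X=\pm\,(x^2-1)^{\#V/4}=\pm\,(x-1)^{\#V/4}(x+1)^{\#V/4}.
\]
Here the hypothesis that $p$ is \emph{odd} is essential: it guarantees that $x-1$ and $x+1$ are distinct irreducible factors in $\gf_p[x]$, so the $(x-1)$-adic valuation of $\det\tilde X$ is exactly $\#V/4$.

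Next I would count invariant factors. The product of the invariant factors of $\tilde X$ equals $\det\tilde X$ up to a unit, and every invariant factor divisible by $x-1$ contributes at least $1$ to the $(x-1)$-adic valuation; hence at most $\#V/4$ of the invariant factors of $\tilde X$ are divisible by $x-1$. By Lemma~\ref{lem:XvsTildeX}, this count coincides with the number of invariant factors of $X$ divisible by $p$. Therefore at most $\#V/4$ of the $\#V/2$ invariant factors of $X$ are divisible by $p$.

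Now I would invoke the divisibility ordering of the invariant factors. Because each invariant factor divides the next, the property ``$p$ does not divide'' is inherited downward: if $p\nmid x_j$ and $x_i\mid x_j$ then $p\nmid x_i$. Thus the invariant factors \emph{not} divisible by $p$ occupy an initial segment of the ordered list. Since at least $\#V/2-\#V/4=\#V/4$ of them are not divisible by $p$, this initial segment contains the first $\#V/4$ factors; that is, $p\nmid x_i$ for all $i\le \#V/4$, which is statement (1). Statement (2) is then immediate from Theorem~\ref{thm:inv_L}: the first $\#V/2$ invariant factors of $L$ are $x_1,x_1,\dots,x_{\#V/4},x_{\#V/4}$, none of which is divisible by $p$.

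The main obstacle is conceptual rather than computational: over $\gf_p$ alone the determinant $\det X=\pm N^{\#V/4}$ vanishes (as $p\mid N$) and yields no information, so one must pass to the polynomial lift and track the single factor $(x-1)$ that corresponds, under $x\mapsto 1$, to reduction at $p$. The delicate points are verifying that $(x-1)$ and $(x+1)$ remain coprime modulo $p$ (which fails precisely when $p=2$, explaining why the theorem concerns odd primes) and translating the valuation bound into a statement about \emph{which} invariant factors are divisible by $p$, using the monotone divisibility of invariant factors.
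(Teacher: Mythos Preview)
Your argument is correct and matches the paper's approach for part~(1) essentially line for line: compute $\det\tilde X=\pm(x-1)^{\#V/4}(x+1)^{\#V/4}$ over $\gf_p[x]$, observe that $x-1$ and $x+1$ are coprime for odd $p$, bound the number of invariant factors divisible by $x-1$, and transfer via Lemma~\ref{lem:XvsTildeX}. For part~(2) the paper gives an independent computation of $\det\tilde L=(-2(x-1))^{\#V/2}$ and repeats the argument, whereas you deduce it from part~(1) via Theorem~\ref{thm:inv_L}; the paper explicitly notes this shortcut is valid, so your proof is complete.
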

\begin{proof}
The product of the invariant factors of $\tilde{X}$ is the determinant.  By Theorem~\ref{thm:chroXdet}, the determinant of $\tilde{X}$ is
\[\det(\tilde{X})=\pm (x^2+N-1)^{\#V/4}=\pm (x^2-1)^{\#V/4} = \pm (x-1)^{\#V/4}(x+1)^{\#V/4}.\]
The factors $x+1$ and $x-1$ are not associates in $\gf_p[x]$ if $p\not=2$.  Thus, $x-1$ appears with a multiplicity of $\#V/4$.  Since each invariant factor divides the next, at most $\#V/4$ invariant factors of $\tilde{X}$ are multiples of $x-1$, and these must be among the last $\#V/4$ invariant factors.  By Lemma~\ref{lem:XvsTildeX}, at most $\#V/4$ invariant factors of $X$ are divisible by $p$, and these must be among the last $\#V/4$ invariant factors.

A similar argument can be done for $L$:
Using Corollary~\ref{cor:hatleig}, we know that
\[\det(\hat{L}_R)=(\sigma^2-\rho)^{\#V/2}\]
and setting $x_2=\cdots=x_N=1$, we get:
\begin{align*}
\det(\hat{L})&=((x+N-1)^2-(x^2+N-1))^{\#V/2}\\
&=(2(N-1)x+(N-1)^2-(N-1))^{\#V/2}\\
&=((N-1)(2x+N-2))^{\#V/2}.
\end{align*}
If we reduce this modulo $p$, then we get
\[\det(\tilde{L})=((0-1)(2x+0-2))^{\#V/2}=((-2)(x-1))^{\#V/2}.\]
Thus, $x-1$ appears with multiplicity $\#V/2$.  Since each invariant factor divides the next, at most $\#V/2$ invariant factors of $\tilde{X}$ are multiples of $x-1$, and these must be among the last $\#V/2$ invariant factors.  By Lemma~\ref{lem:XvsTildeX}, at most $\#V/2$ invariant factors of $X$ are divisible by $p$, and these must be among the last $\#V/2$ invariant factors.
\end{proof}
Note that the version of this theorem for $X$ and for $L$ are equivalent because of Theorem~\ref{thm:inv_L}, so only half of the above proof is necessary.  We include both, however, so that in the next section, Section~\ref{sec:altoddprime}, we can discuss relationships with other theorems independently of Theorem~\ref{thm:inv_L}.

\subsection{Invariant factors up to powers of 2}
Together with what we know about the invariant factors of $X$ and $L$, the Odd Prime Theorem determines all of the invariant factors, up to powers of 2.  One way to state this is the following:
\begin{theorem} \label{thm:main_invf}
Suppose $L$ is the signed Laplacian of an Adinkra on $N$ colors, written as
\[L=\begin{bmatrix}
NI&-X\\
-X^T&NI
\end{bmatrix}\]
Then:
\begin{enumerate}
\item The first $\#V/4$ invariant factors of $X$ are powers of $2$ that divide $N$, and the remaining $\#V/4$ invariant factors are $N$ divided by powers of $2$.
\item The first $\#V/2$ invariant factors of $L$ are powers of $2$ that divide $N$, and the remaining $\#V/2$ invariant factors are $N(N-1)$ divided by powers of $2$.
\end{enumerate} 
\end{theorem}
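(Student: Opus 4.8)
The plan is to assemble two results already in hand: the divisibility-and-symmetry statement for the invariant factors of $X$ (Proposition~\ref{prop:symm_invf}) and the Odd Prime Theorem (Theorem~\ref{thm:oddprime2}). Write $N = 2^a m$ with $m$ odd. The first step is to pin down the small half of the invariant factors of $X$. By Proposition~\ref{prop:symm_invf}, each invariant factor $x_i$ divides $N$. Suppose $i \le \#V/4$ and some odd prime $p$ divided $x_i$; then $p \mid x_i \mid N$, so $p$ is an odd prime dividing $N$, and the Odd Prime Theorem forbids $p$ from dividing any of the first $\#V/4$ invariant factors of $X$, a contradiction. Hence for $i \le \#V/4$ the factor $x_i$ has no odd prime divisor, so $x_i$ is a power of $2$; and since $x_i \mid N$ it is a power of $2$ dividing $N$. (In particular $x_1 = 1$, consistent with Theorem~\ref{thm:firstlast}.) This proves the first assertion of part (1).

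Second, I would read off the large half from the symmetry recorded in Proposition~\ref{prop:symm_invf}, namely $x_{\#V/2+1-i} = N/x_i$. For $i \le \#V/4$, since $x_i$ is a power of $2$, the factor $x_{\#V/2+1-i} = N/x_i$ is exactly $N$ divided by a power of $2$; as $i$ ranges over $1,\dots,\#V/4$ these are precisely the last $\#V/4$ invariant factors. This completes part (1).

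Third, part (2) follows by feeding part (1) into Theorem~\ref{thm:inv_L}, which presents the invariant factors of $L$ as the list $(x_1,x_1,\dots,x_{\#V/4},x_{\#V/4},\,N(N-1)/x_{\#V/4},N(N-1)/x_{\#V/4},\dots,N(N-1)/x_1,N(N-1)/x_1)$. The first $\#V/2$ of these are the $x_i$ with $i \le \#V/4$, each appearing twice; by part (1) these are powers of $2$ dividing $N$. The remaining $\#V/2$ are the $N(N-1)/x_i$, each appearing twice, and since each $x_i$ is a power of $2$ these are $N(N-1)$ divided by powers of $2$. As an alternative route for part (2) that bypasses Theorem~\ref{thm:inv_L}, one could invoke part (2) of the Odd Prime Theorem directly together with the fact that every invariant factor of $L$ divides $N(N-1)$ (since by Theorem~\ref{thm:firstlast} the last invariant factor is $N(N-1)$ and each divides the next).

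I expect this to be an assembly of existing results rather than a fresh computation, so there is no single deep step; the only things requiring genuine care are the bookkeeping — matching the indexing convention of Proposition~\ref{prop:symm_invf} with that of Theorem~\ref{thm:inv_L} so that the "first half'' and "last half'' line up — and confirming that the degenerate low-$N$ cases are consistent, in particular that when $N$ is odd the argument forces every small-half factor to equal $1$ (a power of $2$ dividing the odd number $N$), and that the genuinely singular case $N=1$ is handled separately or excluded as in the earlier results.
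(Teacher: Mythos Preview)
Your proposal is correct and matches the paper's own proof essentially line for line: invoke the Odd Prime Theorem to rule out odd prime factors in the first $\#V/4$ invariant factors of $X$, use Proposition~\ref{prop:symm_invf} for the symmetry $x_{\#V/2+1-i}=N/x_i$, and then feed both into Theorem~\ref{thm:inv_L} for $L$. The alternative route you sketch for part~(2) is likewise close to the paper's second proof in Section~\ref{sec:altoddprime}, which uses Lorenzini's theorem in place of Theorem~\ref{thm:firstlast} to bound the invariant factors by $N(N-1)$.
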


\begin{proof}
By Theorem~\ref{thm:oddprime}, the first $\#V/4$ invariant factors of $X$ have no odd prime factors.  Thus, they must be powers of $2$.

By Proposition~\ref{prop:symm_invf}, the last $\#V/4$ invariant factors of $X$ are $N$ divided by a corresponding invariant factor from among the first $\#V/4$ factors, which are powers of 2.

By Theorem~\ref{thm:inv_L}, the first $\#V/2$ invariant factors of $L$ are powers of 2, and the last $\#V/2$ invariant factors are $N(N-1)$ divided by powers of 2.
\end{proof}

We already know that the invariant factors are $(1^{\#V/2},(N(N-1))^{\#V/2})$ when the Adinkra is a prism.
We can identify more examples from combining the above Theorem with divisibility arguments, as follows:

\begin{corollary} \label{coro:multi4}
If $N$ is not a multiple of $4$, then the invariant factors for $X$ are
\[(1^{\#V/4},N^{\#V/4}),\]
and the invariant factors for $L$ are
\[(1^{\#V/2},(N(N-1))^{\#V/2}).\]
\end{corollary}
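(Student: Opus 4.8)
The plan is to combine the structural description in Theorem~\ref{thm:main_invf} with a single divisibility constraint on the middle invariant factor of $X$. By Theorem~\ref{thm:main_invf}(1) I already know that the first $\#V/4$ invariant factors $x_1 \mid x_2 \mid \cdots \mid x_{\#V/4}$ of $X$ are powers of $2$ dividing $N$ (this is precisely where the Odd Prime Theorem has done the real work). So the entire claim for $X$ reduces to showing that the largest of these, $x_{\#V/4}$, equals $1$: since the invariant factors form a divisibility chain, $x_{\#V/4} = 1$ forces $x_1 = \cdots = x_{\#V/4} = 1$, and then Proposition~\ref{prop:symm_invf} gives the remaining factors as $N/x_i = N$, yielding $(1^{\#V/4}, N^{\#V/4})$.

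To pin down $x_{\#V/4}$, I would examine the two middle invariant factors $x_{\#V/4}$ and $x_{\#V/4+1}$. The symmetry in Proposition~\ref{prop:symm_invf} says $x_j \cdot x_{\#V/2 - j + 1} = N$; taking $j = \#V/4$ gives $x_{\#V/4+1} = N/x_{\#V/4}$. The defining divisibility of the Smith Normal Form then yields $x_{\#V/4} \mid x_{\#V/4+1} = N/x_{\#V/4}$, that is, $x_{\#V/4}^2 \mid N$. This is the crux of the argument, and the step I would flag as the main (though modest) obstacle, since it is the one place where the chain condition and the symmetry relation must be combined correctly, with attention to the index bookkeeping.

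Now I would invoke the hypothesis that $N$ is not a multiple of $4$. Because $x_{\#V/4}$ is a power of $2$ with $x_{\#V/4}^2 \mid N$, the possibility $x_{\#V/4} \ge 2$ would give $4 \mid x_{\#V/4}^2 \mid N$, contradicting $4 \nmid N$. Hence $x_{\#V/4} = 1$, which completes the determination of the invariant factors of $X$ as $(1^{\#V/4}, N^{\#V/4})$. Finally, feeding these into Theorem~\ref{thm:inv_L} produces the invariant factors $(1^{\#V/2}, (N(N-1))^{\#V/2})$ for the signed Laplacian $L$.

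I do not expect any serious difficulty beyond the indexing above, since all the heavy machinery (the power-of-$2$ structure from the Odd Prime Theorem, the symmetry of Proposition~\ref{prop:symm_invf}, and the passage from $X$ to $L$ via Theorem~\ref{thm:inv_L}) is already in place. The only technical point to note is that $\#V/4$ is an integer here, which holds because $N \ge 2$ forces $\#V$ to be a multiple of $4$ by Corollary~\ref{cor:vmult4}; the degenerate case $N = 1$ is excluded as elsewhere in the paper.
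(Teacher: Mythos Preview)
Your argument is correct and follows the same route as the paper: invoke Theorem~\ref{thm:main_invf} to know the first $\#V/4$ invariant factors of $X$ are powers of $2$, use the symmetry of Proposition~\ref{prop:symm_invf} together with the divisibility chain to get $x_j^2\mid N$, and conclude $x_j=1$ from $4\nmid N$; the only cosmetic difference is that the paper runs this for each $j\le\#V/4$ directly, whereas you do it once for $j=\#V/4$ and cascade via the chain condition.
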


\begin{proof}
The first $\#V/4$ invariant factors for $X$ must be powers of $2$ that are factors of $N$.  If $x_j$ is an invariant factor with $j\le \#V/4$, and $x_j=2^m$, then
\[2^m=x_j\,|\,x_{\#V/2-j+1}=N/x_j=N/2^m.\]
Then $2^{2m}$ divides $N$.

If $N$ is not a multiple of 4, then this means $m=0$, so $x_j=1$.
\end{proof}

The above proof more generally gives us:
\begin{corollary}\label{coro:multi4m}
If $N$ is not a multiple of $4^m$, then each of the first $\#V/4$ invariant factors for $X$ is at most $2^{m-1}$.
\end{corollary}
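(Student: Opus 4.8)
The plan is to mimic the proof of Corollary~\ref{coro:multi4} essentially verbatim, simply replacing the crude conclusion ``a power of $2$ dividing $N$ whose square divides $N$ must be $1$'' with a sharper reading of how large that power of $2$ can be. The entire analytic content has already been packaged into Theorem~\ref{thm:main_invf}, so what remains is a short divisibility count built on the pairing of invariant factors.

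First I would invoke Theorem~\ref{thm:main_invf}(1): each of the first $\#V/4$ invariant factors of $X$ is a power of $2$ dividing $N$. Fix $j\le \#V/4$ and write $x_j=2^a$. Next I would use the symmetry of the invariant factors from Proposition~\ref{prop:symm_invf}, which gives $x_{\#V/2-j+1}=N/x_j$. Since $j\le \#V/4$ implies $j\le \#V/2-j+1$ and the invariant factors form a divisibility chain $x_1\mid x_2\mid\cdots\mid x_{\#V/2}$, we have
\[
x_j \,\bigm|\, x_{\#V/2-j+1}=N/x_j,
\]
and therefore $x_j^{\,2}=2^{2a}$ divides $N$.

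The last step is the counting argument, which is the only place the hypothesis enters. If $N$ is not a multiple of $4^m=2^{2m}$, then $2^{2m}\nmid N$, whereas $2^{2a}\mid N$; comparing $2$-adic valuations forces $2a<2m$, hence $a\le m-1$, so $x_j=2^a\le 2^{m-1}$. Since $j\le \#V/4$ was arbitrary, this bounds all of the first $\#V/4$ invariant factors, as claimed; taking $m=1$ recovers Corollary~\ref{coro:multi4}.

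I do not anticipate a genuine obstacle here, and indeed the statement is advertised in the text as a direct strengthening of the previous corollary. The only point requiring care is that the ``power of $2$'' conclusion I rely on is not elementary: it is exactly the output of Theorem~\ref{thm:main_invf}, which rests on the Odd Prime Theorem~\ref{thm:oddprime}. Once that input is granted, the result follows immediately from the self-pairing $x_j\,x_{\#V/2-j+1}=N$.
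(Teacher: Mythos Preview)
Your proof is correct and follows exactly the approach the paper intends: the text introduces this corollary with ``The above proof more generally gives us,'' and your argument is precisely that generalization of the proof of Corollary~\ref{coro:multi4}, using Theorem~\ref{thm:main_invf} together with the pairing $x_j\mid N/x_j$ to force $2^{2a}\mid N$.
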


\subsection{Relationship to two distinct eigenvalues}
\label{sec:altoddprime}
We showed in Section~\ref{sec:eigadinkra} that signed Laplacians for Adinkras have two eigenvalues.  We have now been talking about the invariant factors of the Laplacian.  It turns out there is a relationship between these facts.  We will here give an alternative proof of Theorem~\ref{thm:main_invf}, which does not involve Theorem~\ref{thm:inv_L} that relates $L$ to its block matrix form involving $X$, hence is applicable to other instances when one does not have such strong structure information of the matrix.

For a prime $p$, we write $p^\alpha\,||\,n$ if $p^{\alpha}\,|\, n$ but $p^{\alpha+1}\!\nmid\! n$.

We start with a theorem due to Lorenzini that relates the eigenvalues of an integer matrix and its invariant factors.

\begin{theorem} \cite{Lorenzini} \label{thm:Lorenzini}
Let $M\neq 0$ be a $n\times n$ diagonalizable integer matrix and let $\lambda_1,\ldots,\lambda_t$ be the distinct non-zero eigenvalues of $M$. Then every nonzero invariant factor of $M$ divides $\prod \lambda_i$.
\end{theorem}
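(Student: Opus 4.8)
The plan is to prove the statement by manufacturing, from the spectral data, an explicit integer polynomial identity in $M$ and then reading off divisibility from the cokernel $\operatorname{coker}(M)=\zz^n/M\zz^n$. Set $P=\prod_{i=1}^t\lambda_i$. Since $M$ is diagonalizable, its minimal polynomial is squarefree, and because the characteristic polynomial $\det(xI-M)$ is a monic integer polynomial, Gauss's lemma forces the minimal polynomial $m(x)$ to lie in $\zz[x]$ and to equal $\prod_\mu(x-\mu)$ over the distinct eigenvalues $\mu$. Let $q(x)=\prod_{i=1}^t(x-\lambda_i)$ be the same product with the eigenvalue $0$ omitted (if it occurs). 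Then $q\in\zz[x]$ as well, since it is either $m(x)$ or $m(x)/x$; it is monic of degree $t$, with constant term $q(0)=(-1)^t P$.

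First I would record two identities. Because $m(x)$ equals $q(x)$ or $x\,q(x)$, in either case $M\,q(M)=m(M)=0$, so the integer matrix $q(M)$ has image inside $\ker_\zz(M)$. Writing $q(x)=x\,g(x)+q(0)$ with $g\in\zz[x]$ yields $q(M)=MN+(-1)^t P\,I$, where $N=g(M)$ is an integer matrix commuting with $M$. When $M$ is nonsingular, $q(M)=0$ and this becomes $MN=(-1)^{t+1}P\,I$; hence $P\,\zz^n=MN\zz^n\subseteq M\zz^n$, so $P$ annihilates $\operatorname{coker}(M)$ and every invariant factor divides $P$. This is the easy half.

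The remaining work, the singular case, is where I expect the main obstacle to lie. Passing to $C=\operatorname{coker}(M)$, every polynomial in $M$ descends to an endomorphism of $C$; since $MN\zz^n\subseteq M\zz^n$, the identity $q(M)=MN+(-1)^tP\,I$ descends to $\overline{q(M)}=(-1)^t P\cdot\operatorname{id}_C$. On the other hand $M\,q(M)=0$ says $q(M)\,\zz^n\subseteq\ker_\zz(M)$, so multiplication by $P$ carries all of $C$ into the image $\overline{\ker_\zz(M)}$ of the integer kernel. The crux is then to show this image is torsion-free in $C$: if $w\in\ker_\zz(M)$ and $kw\in M\zz^n$ for some $k\neq 0$, write $kw=Mu$ and apply $M$ to get $M^2u=kMw=0$; diagonalizability gives $\ker(M^2)=\ker(M)$, so $Mu=0$ and $kw=0$, forcing $w=0$ in $C$. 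Thus $P$ annihilates the torsion subgroup $T$ of $C$, whose exponent is exactly the largest nonzero invariant factor $d_r$; since the nonzero invariant factors form a divisibility chain, $d_i\mid d_r\mid P$ for all of them.

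The heart of the argument, and the place where the hypotheses really enter, is the torsion-freeness step: diagonalizability is precisely what forbids a nontrivial nilpotent action at the eigenvalue $0$, which would otherwise enlarge $\ker(M^2)$ beyond $\ker(M)$ and break the conclusion. The integrality of $q(x)$, and hence of $N$ and $q(M)$, is the other point needing care, but it follows cleanly from Gauss's lemma once squarefreeness of the minimal polynomial is invoked. Note also that the singular-case argument subsumes the nonsingular one, since $\ker_\zz(M)=0$ there makes $\overline{\ker_\zz(M)}=0$ and $P\cdot C=0$.
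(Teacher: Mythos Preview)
The paper does not prove this theorem at all; it is quoted from \cite{Lorenzini} and used as a black box. So there is no proof in the paper to compare yours against.

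Your argument is correct and self-contained. The key steps are all sound: the minimal polynomial of an integer matrix lies in $\zz[x]$ by Gauss's lemma; diagonalizability forces it to be squarefree, so $q(x)=\prod_i(x-\lambda_i)$ (the minimal polynomial with any factor of $x$ removed) is again in $\zz[x]$ with constant term $(-1)^tP$; writing $q(x)=xg(x)+(-1)^tP$ gives $q(M)=Mg(M)+(-1)^tPI$, and $Mq(M)=0$ shows $q(M)$ lands in $\ker_\zz(M)$. The one nontrivial point is your torsion-freeness step for the image of $\ker_\zz(M)$ in $C=\zz^n/M\zz^n$: if $kw=Mu$ with $w\in\ker_\zz(M)$, then $M^2u=0$, and diagonalizability (over $\mathbb{C}$, hence over $\zz^n$ by restriction) gives $Mu=0$, so $kw=0$ and $w=0$. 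This cleanly isolates exactly where the diagonalizability hypothesis is used. Once $P\cdot C$ sits inside a torsion-free subgroup, $P$ kills the torsion of $C$, whose exponent is the largest nonzero invariant factor, and the divisibility chain finishes the job.

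One minor stylistic remark: you could streamline by handling both cases at once (as you note at the end), since the nonsingular case is the degenerate instance $\ker_\zz(M)=0$ of the general argument.
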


In particular, for Adinkras, we have the following bound:

\begin{proposition} \label{prop:LinvFineq} For an Adinkra on $N$ colors and any prime $p$ dividing $N(N-1)$, the number of invariant factors of $L$ that are divisible by $p$ is at least $\#V/2$.
\end{proposition}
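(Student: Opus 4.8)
The plan is to combine Lorenzini's theorem (Theorem~\ref{thm:Lorenzini}) with the determinant formula for $L$ from Corollary~\ref{cor:eigL}, and then run a counting argument on $p$-adic valuations. The whole point is that Lorenzini's theorem caps how much each individual invariant factor can contribute to the valuation, while the determinant fixes the total; pitting one against the other forces many factors to be divisible by $p$.

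First I would record that $L$, being a real symmetric integer matrix, is diagonalizable, so that Theorem~\ref{thm:Lorenzini} applies. Assuming $N\ge 2$ (so that $L$ is nonsingular, as noted earlier), its distinct nonzero eigenvalues are $N+\sqrt{N}$ and $N-\sqrt{N}$ by Corollary~\ref{cor:eigL}, and although these are irrational their product is the integer $N(N-1)$. Lorenzini's theorem then tells me that every invariant factor of $L$ divides $N(N-1)$. In particular, writing $p^{\alpha}\,||\,N(N-1)$, each invariant factor contributes at most $\alpha$ to the $p$-adic valuation.

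Next I would use the fact that the product of all $\#V$ invariant factors equals $\lvert\det L\rvert = (N(N-1))^{\#V/2}$, so the total $p$-adic valuation of this product is $\tfrac{\#V}{2}\alpha$. If exactly $m$ of the invariant factors are divisible by $p$, then the total valuation is at most $m\alpha$, since the remaining $\#V-m$ factors contribute $0$ and each of the $m$ divisible factors contributes at most $\alpha$. Comparing the two expressions gives $\tfrac{\#V}{2}\alpha \le m\alpha$; because $p\mid N(N-1)$ forces $\alpha\ge 1$, I may divide by $\alpha$ to obtain $m\ge \#V/2$, which is exactly the claim.

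I do not expect a serious obstacle: essentially all the content is carried by Lorenzini's divisibility bound, after which the conclusion is a pigeonhole-style comparison of total $p$-adic valuation against the determinant. The only points needing care are verifying the hypotheses of Theorem~\ref{thm:Lorenzini} (diagonalizability, which is immediate from the symmetry of $L$) and observing that, even though the eigenvalues $N\pm\sqrt{N}$ are not rational, their product $N(N-1)$ is an integer, so the divisibility statement is meaningful over $\zz$ and the determinant factors as a genuine power of $N(N-1)$.
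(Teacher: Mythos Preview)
Your proposal is correct and follows essentially the same route as the paper: invoke Lorenzini's theorem to bound the $p$-adic valuation of each invariant factor by $\alpha$ (where $p^{\alpha}\,\|\,N(N-1)$), compute the total valuation from $\det L=(N(N-1))^{\#V/2}$, and compare. Your added remarks that $L$ is symmetric (hence diagonalizable) and that the product $(N+\sqrt{N})(N-\sqrt{N})=N(N-1)$ is an integer despite the individual eigenvalues being irrational are good points of rigor that the paper leaves implicit.
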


\begin{proof}
Let the $i$th invariant factor be written as $p^{\beta_i}\gamma_i$ where $p$ does not divide $\gamma_i$.  Let $B$ be the set of $i$ for which $\beta_i>0$.

By Theorem~\ref{thm:Lorenzini}, each invariant factor of $L$ divides $(N+\sqrt{N})(N-\sqrt{N})=N(N-1)$.  Suppose $p^\alpha\,||\,N(N-1)$.  Then each $\beta_i\le\alpha$.  We will also have $p^{\alpha\#V/2}\,||\, det(L)$, which is the product of the invariant factors, so that 
\[\alpha\#V/2=\sum_B \beta_i\le (\#B)(\alpha).\]
Therefore $\#B\ge \#V/2$.
\end{proof}

\begin{corollary} \label{coro:pN_invf}
Let $p$ be an odd prime dividing $N$ and suppose $p^\alpha\,||\,N$. Then the $p$-component of the invariant factors of $L$ is $(1^{\#V/2},p^{\alpha\#V/2})$.
\end{corollary}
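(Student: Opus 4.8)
The plan is to combine part (2) of the Odd Prime Theorem with the determinant count underlying Proposition~\ref{prop:LinvFineq}, and then to force the inequality appearing there to become an equality. Write each invariant factor of $L$ as $p^{\beta_i}\gamma_i$ with $p\nmid\gamma_i$, so that the $p$-component is the list $(p^{\beta_1},\ldots,p^{\beta_{\#V}})$; the goal is to show $\beta_i=0$ for $i\le \#V/2$ and $\beta_i=\alpha$ for $i>\#V/2$.

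First I would invoke part (2) of the Odd Prime Theorem (Theorem~\ref{thm:oddprime}): since $p$ is an odd prime dividing $N$, none of the first $\#V/2$ invariant factors of $L$ is divisible by $p$, so $\beta_i=0$ for all $i\le\#V/2$. Next, because $p\mid N\mid N(N-1)$, Proposition~\ref{prop:LinvFineq} guarantees that at least $\#V/2$ invariant factors are divisible by $p$. These two facts together pin down exactly which factors carry the prime $p$: the first $\#V/2$ carry none, so every one of the last $\#V/2$ must be divisible by $p$, i.e. $\beta_i\ge 1$ for all $i>\#V/2$.

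It remains to upgrade each such $\beta_i$ from merely positive to exactly $\alpha$. Here I would use two bounds in tandem. On one hand, since $p$ is odd and divides $N$, it cannot divide $N-1$ (consecutive integers are coprime), so $p^\alpha\,||\,N(N-1)$; applying Lorenzini's theorem (Theorem~\ref{thm:Lorenzini}) to the diagonalizable matrix $L$, whose distinct nonzero eigenvalues $N\pm\sqrt{N}$ multiply to $N(N-1)$, shows that every invariant factor divides $N(N-1)$, whence $\beta_i\le\alpha$ for all $i$. On the other hand, the product of the invariant factors is $\det(L)=(N(N-1))^{\#V/2}$ by Corollary~\ref{cor:eigL}, so that $p^{\alpha\,\#V/2}\,||\,\det(L)$ and hence $\sum_i\beta_i=\alpha\,\#V/2$. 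Since the first $\#V/2$ indices contribute nothing to this sum, the remaining $\#V/2$ values $\beta_i$ are each at most $\alpha$ yet sum to $\alpha\,\#V/2$ over exactly $\#V/2$ indices; the only way this can happen is $\beta_i=\alpha$ for every $i>\#V/2$. This yields the $p$-component $(1^{\#V/2},p^{\alpha\,\#V/2})$.

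I do not anticipate a serious obstacle, as all the ingredients are already available; the single point requiring care is the equality-in-the-pigeonhole step, where the upper bound $\beta_i\le\alpha$ from Lorenzini must be coupled with the exact determinant valuation $\alpha\,\#V/2$ and the count of precisely $\#V/2$ nonzero terms to force each $\beta_i$ to its maximum. Assembling these observations gives the stated $p$-component of the invariant factors of $L$.
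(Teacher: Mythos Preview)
Your proof is correct and follows essentially the same approach as the paper: both combine the upper bound from the Odd Prime Theorem with the lower bound from Proposition~\ref{prop:LinvFineq} to force equality, and then use the chain of inequalities (Lorenzini's bound $\beta_i\le\alpha$ together with the determinant valuation $\sum\beta_i=\alpha\,\#V/2$) to pin each $\beta_i$ at $\alpha$. The paper states this more tersely by simply noting that the inequalities in the proof of Proposition~\ref{prop:LinvFineq} become equalities throughout, whereas you unpack that conclusion explicitly.
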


\begin{proof}
Proposition~\ref{prop:LinvFineq} says that the number of invariant factors of $L$ that are divisible by $p$ is at least $\#V/2$.  The Odd Prime Theorem~\ref{thm:oddprime} says that this is at most $\#V/2$.  Thus, the inequalities in the proof of Proposition~\ref{prop:LinvFineq} are equalities throughout.
\end{proof}

{\sc Proof of Theorem~\ref{thm:main_invf} (for $L$):}
By Theorem~\ref{thm:Lorenzini}, every invariant factor of $L$ divides $N(N-1)$.
The first $\#V/2$ invariant factors of $L$ are relatively prime to $N-1$ by the Elementary Divisor Theorem and the fact that the minor corresponding to the submatrix $NI_{\#V/2}$ is relatively prime to $N-1$.
Furthermore, the first $\#V/2$ invariant factors are relatively prime to $p$ for every odd prime $p|N$ by Corollary~\ref{coro:pN_invf}, so they must be powers of 2 that divide $N$, proving the first assertion.

Next, denote by ${\mathfrak o}(n)$ the largest odd number dividing an integer $n$ and write $N':={\mathfrak o}(N^2-N)$. Then ${\mathfrak o}(f)=1$ for each invariant factor from the first half, and ${\mathfrak o}(f)|N'$ for each of the rest, but the product of all ${\mathfrak o}(f)$'s equals ${\mathfrak o}((N^2-N)^{\#V/2})=N'^{\#V/2}$, which forces  ${\mathfrak o}(f)=N'$ for each invariant factor in the second half. This proves the second assertion. \hfill$\Box$\\

Summarizing, the idea in the above proof is that we can (1) obtain the eigenvalues and determinants of $L$ and $\tilde{L}$ from those of $\hat{L}$ over $\zz[x]$, (2) work over $\zz$ and $\mathbb{C}$ (resp. $\gf_p[x]$) provides an {\em upper bound} (resp. {\em lower bound}) on the power of $p$ (resp. $x-1$) in the invariant factors of $L$ (resp. $\tilde{L}$), hence a {\em lower bound} (resp. {\em upper bound}) of the number of invariant factors divisible by $p$ (resp. $x-1$), and (3) combine the information from (2) over $\gf_p$ to give exact information over $\zz$.
The interaction of different Laplacians over various rings is quite aesthetic in the case of Adinkras.

\section{The 2-rank of the Laplacian, signed and unsigned}
\label{sec:tworank}
In $\gft$, $1=-1$, and so the signed Laplacian modulo $2$, $\overline{L}$, is independent of the signs in the graph, and thus the mod $2$ versions of the signed Laplacian and of the unsigned Laplacian are the same.  This allows us to compare our results to results in the existing literature about the unsigned Laplacian on graphs of the type we are considering in this paper.

Since $\gft$ is a field, the rank of a matrix over $\gft$ is the only invariant of the matrix, up to equivalence by row and column operations.  By Lemma~\ref{lem:XvsTildeX}, the rank of $\overline{L}$ in $\gft$ is the number of odd invariant factors of $L$.  By Corollary~\ref{cor:lasteven}, the last half of the invariant factors for $L$ are even.  By Theorem~\ref{thm:main_invf}, the first half of the invariant factors for $L$ are all powers of $2$.  Therefore, the rank of $\overline{L}$ is equal to the multiplicity of $1$ as an invariant factor of $L$.

In the unsigned case, the Laplacian always has a nontrivial kernel.  For connected graphs, this kernel has dimension $1$, and corresponds to constant functions on the set of vertices.  Therefore the last invariant factor for the unsigned Laplacian is always $0$.  In that case, the invariant factors for the critical group is defined to be the invariant factors of the Laplacian except for this last $0$.  In our signed case, on the other hand, we have seen in Theorem~\ref{cor:eigL} that the eigenvalues of the signed Laplacian are $N\pm\sqrt{N}$, so that if $N\ge 2$, there is no kernel.

When an ordinary graph is an $N$-cube or more generally a Cayley graph of $\gft^r$, the odd component of its critical group can be deduced using representation theoretic techniques, see \cite[Theorem~1.2]{Bai_hypercube} and \cite[Proposition~1.8]{GKM_Cayley}.
However, the $2$-Sylow subgroup of their critical groups, or even the just $2$-rank of the Laplacians, has been rather difficult to understand.
Hence, the known results on these graphs are somewhat parallel to ours on Adinkras, but with some differences in exact statements and machinery.

For example, Reiner conjectured that the critical group of the $N$-cube has exactly $2^{N-1}-1$ invariant factors, which was first proved by Bai in \cite{Bai_hypercube} by an inductive argument.
Using our work, we can give a short proof of it: consider a cubical Adinkra on $N$ colors, the $2$-corank of its Laplacian is $2^{N-1}$ by Corollary~\ref{coro:cubic_Adinkra}, so the critical group of the $N$-cubes has exactly $2^{N-1}-1$ even invariant factors; on the other hand, the $2^{N-1}\times 2^{N-1}$ submatrix $-I$ (up to permutation of row) in the Laplacian together with the Elementary Divisor Theorem implies that the first $2^{N-1}$ invariant factors of it are ones.

The recent work of Gao--Marx-Kuo--McDonald considers Cayley graphs of $\gft^r$.  As we will see shortly, these are quotients of cubes, and so these are the underlying unsigned versions of connected Adinkras.

\begin{definition}
The Cayley graph $\Cayley(G,g_1,\ldots,g_k)$ for a finitely generated group $G$ and a set $\{g_1,\ldots,g_k\}$ of generators of $G$ is a graph whose vertices are the elements of $G$ and where for every vertex $h\in G$, and every generator $g_i$, there is an edge from $h$ to $g_ih$, labeled with $i$.  If $g_i{}^2$ is the identity, then we do not need to indicate a direction on the edge, but more generally we orient the edge from $h$ to $g_ih$.
\end{definition}

If $\evec_1,\ldots,\evec_r$ are the standard basis elements of $\gft^r$, then the colored $r$-cube is $\Cayley(\gft^r,\evec_1,\ldots,\evec_r)$, where the colors of the $r$-cube correspond to the labels of the Cayley graph.  Since all elements of $\gft^r$ are of order 1 or 2 over vector addition, the Cayley graph is not directed.

More generally, we will consider $\Cayley(\gft^r,\mathbf{g}_1,\ldots,\mathbf{g}_N)$.  It is convenient to collect the generators $\mathbf{g}_1,\ldots,\mathbf{g}_N$ as columns of an $r\times N$ matrix $M$.  Then we write this Cayley graph as $\Cayley(\gft^r,M)$.  The columns of $M$ generate $\gft^r$ if and only if the rank of $M$ is $r$ (which in turn guarantees $N\ge r$).

Then Gao--Marx-Kuo--McDonald counts Sylow-$2$ cyclic factors of the critical group of the Cayley graph in a large set of cases:
\begin{definition}
A matrix $M$ is called \emph{generic} if the mod $2$ sum of its columns is non-zero.
\end{definition}

\begin{theorem}[Theorem 1.1 from \cite{GKM_Cayley}]
If $M$ is generic, then the number of Sylow-$2$ cyclic factors of $K(\Cayley(\gft^r,M))$ is $2^{r-1}-1$.
\end{theorem}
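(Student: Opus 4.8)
The plan is to reduce the assertion to a single rank computation over $\gft$ and then carry it out inside the group algebra $\gft[\gft^r]$, where characteristic-two phenomena make it clean. Write $G=\Cayley(\gft^r,M)$ with $g_1,\dots,g_N$ the columns of $M$. Since the columns generate $\gft^r$, $G$ is a connected $N$-regular graph on $2^r$ vertices, so its (unsigned) Laplacian $L$ has exactly one zero invariant factor and $\operatorname{coker}(L)\cong\zz\oplus K(G)$. The number of Sylow-$2$ cyclic factors of $K(G)$ is therefore the number of \emph{even} invariant factors of $L$ other than the zero one; by the general principle behind Lemma~\ref{lem:XvsTildeX} (applied at $p=2$), the number of even invariant factors of $L$ equals its $2$-corank $2^r-\mathrm{rank}_{\gft}(\overline{L})$. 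Hence the quantity we want is $2^r-\mathrm{rank}_{\gft}(\overline{L})-1$, and the theorem is equivalent to the identity
\[\mathrm{rank}_{\gft}(\overline{L})=2^{r-1}.\]

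Next I would compute this rank in the group algebra. As in Section~\ref{sec:tworank}, over $\gft$ the signature is irrelevant and $\overline{L}=(N\bmod 2)\,I+\overline{A}$, which I view as the element $(N\bmod 2)\,Y^{0}+\sum_{i=1}^{N}Y^{g_i}$ of $R:=\gft[\gft^r]$ acting on itself by multiplication, where $Y^{g}$ denotes the group-algebra basis vector indexed by $g\in\gft^r$. Setting $z_j=Y^{\evec_j}-1$ identifies $R$ with the local ring $\gft[z_1,\dots,z_r]/(z_1^2,\dots,z_r^2)$ (indeed $z_j^2=Y^{2\evec_j}+1=0$), with augmentation ideal $\mathfrak{m}=(z_1,\dots,z_r)$. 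Expanding $Y^{g_i}=\prod_{j:(g_i)_j=1}(1+z_j)$, the constant term of $\overline{L}$ is $2N\equiv 0$, so $\overline{L}\in\mathfrak{m}$, while its linear part is $\sum_j (M\ones)_j\,z_j$. Thus genericity, $M\ones\neq 0$, is exactly the statement that $\overline{L}$ has nonzero linear part, i.e. $\overline{L}\in\mathfrak{m}\setminus\mathfrak{m}^2$.

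It then remains to prove the key lemma: any $a\in\mathfrak{m}\setminus\mathfrak{m}^2$ has $\dim_{\gft}(aR)=2^{r-1}$. The conceptual reason for the upper bound is the characteristic-two miracle that every $a\in\mathfrak{m}$ satisfies $a^2=0$ (squaring is Frobenius and annihilates every $z_j$, leaving only the square of the constant term), whence $\mathrm{im}(a)\subseteq\ker(a)$ and $\mathrm{rank}(a)\le 2^{r-1}$. For equality I would normalize: an element of $GL_r(\gft)$ induces an algebra automorphism of $R$ acting linearly on $\mathfrak{m}/\mathfrak{m}^2$, so after applying the one carrying the nonzero linear form of $a$ to $z_1$ I may assume $a=h_0+z_1u$ under the decomposition $R=R'\oplus z_1R'$ with $R'=\gft[z_2,\dots,z_r]/(z_j^2)$, where $u=1+(\cdots)$ is a unit of $R'$ and $h_0\in(\mathfrak{m}')^{2}$. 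Since $h_0$ has zero constant term, the same Frobenius computation gives $h_0^2=0$. A direct calculation then shows multiplication by $a$ sends $(b_0,b_1)\mapsto(h_0b_0,\ ub_0+h_0b_1)$, whose kernel is $\{(u^{-1}h_0b_1,\,b_1):b_1\in R'\}$, of dimension $\dim_{\gft}R'=2^{r-1}$; hence the rank is exactly $2^{r-1}$.

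The main obstacle is the exactness of this rank: the bound $\mathrm{rank}(\overline{L})\le 2^{r-1}$ is automatic, but pinning it to equality is where the genericity hypothesis must enter in precisely the right place, and the clean kernel computation hinges on the characteristic-two identity $h_0^2=0$. I expect the only delicate bookkeeping to be the block decomposition after normalization—checking that $u$ is a unit and $h_0\in(\mathfrak{m}')^{2}$—while everything else follows from the dictionary between $\gft$-ranks, $2$-coranks, and even invariant factors already established in Section~\ref{sec:tworank}. It is worth noting that this recovers the cube case of Corollary~\ref{coro:cubic_Adinkra} (where $r=N$ and $\overline{L}$ has linear part $\sum_j z_j\neq 0$) as the special instance $M=I$.
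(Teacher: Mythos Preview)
The paper does not prove this statement: it is quoted as Theorem~1.1 of \cite{GKM_Cayley} and used as an external input to derive Corollary~\ref{cor:notallone}. There is therefore no in-paper proof to compare against; what follows is an assessment of your argument on its own merits.

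Your proof is correct and self-contained. The reduction to $\mathrm{rank}_{\gft}(\overline{L})=2^{r-1}$ is exactly the content of Proposition~\ref{prop:countevenfactors} together with Lemma~\ref{lem:XvsTildeX} at $p=2$. The group-algebra computation is clean: under $z_j=Y^{\evec_j}-1$ one has $R\cong\gft[z_1,\dots,z_r]/(z_1^2,\dots,z_r^2)$, the constant term of $\overline{L}$ is $(N\bmod 2)+N\equiv 0$, and its linear part is $\sum_j (M\ones)_j z_j$, so genericity is precisely $\overline{L}\in\mathfrak m\setminus\mathfrak m^2$. The Frobenius observation that every $a\in\mathfrak m$ satisfies $a^2=0$ gives the upper bound $\mathrm{rank}(a)\le 2^{r-1}$. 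For the lower bound, the $GL_r(\gft)$-automorphisms of $R$ do act as $GL_r(\gft)$ on $\mathfrak m/\mathfrak m^2$, so one may normalize the linear part to $z_1$; then in the decomposition $a=h_0+z_1u$ with $h_0,u\in R'$ one checks $h_0\in(\mathfrak m')^2$ and $u\in 1+\mathfrak m'$ is a unit. The map $(b_0,b_1)\mapsto(h_0b_0,\,ub_0+h_0b_1)$ has kernel $\{(u^{-1}h_0b_1,b_1):b_1\in R'\}$ because $h_0^2=0$ (same Frobenius argument in $R'$), giving rank exactly $2^{r-1}$. Every step checks out.

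Compared with the literature the paper cites (Bai's inductive argument in \cite{Bai_hypercube} for the cube case, and the general result in \cite{GKM_Cayley}), your approach is notably economical: the characteristic-two identities $z_j^2=0$ and $a^2=0$ collapse the problem to a single local-ring kernel computation, bypassing induction entirely. The paper itself sketches a different short proof of the cube case just before Corollary~\ref{cor:notallone}, using the Adinkra structure and Corollary~\ref{coro:cubic_Adinkra}; your argument instead works directly with the unsigned Laplacian and handles the full generic case uniformly.
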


We first make precise of the relation between Cayley graphs and quotients of $N$-cubes.

\begin{proposition}
Let $r\le N$ and let $M$ be an $r\times N$ matrix with values in $\gft$ with rank $r$.  Then $\Cayley{(\gft^r,M)}$ is isomorphic to the quotient of the $N$-cube by $\ker(M)$.
\end{proposition}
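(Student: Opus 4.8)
The plan is to realize the claimed isomorphism as the map induced on vertices by the matrix $M$ itself, and then to check that this map respects the colored edge structure.

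First I would set up the vertex bijection. Regard $M$ as the linear map $\gft^N \to \gft^r$, $x \mapsto Mx$; since $M$ has rank $r$ this map is surjective, so by the first isomorphism theorem it descends to an isomorphism of $\gft$-vector spaces
\[\bar{M} \colon \gft^N/\ker(M) \xrightarrow{\ \sim\ } \gft^r, \qquad [x] \mapsto Mx.\]
This $\bar{M}$ is then a bijection from the vertex set $\gft^N/\ker(M)$ of the quotient graph onto the vertex set $\gft^r$ of $\Cayley(\gft^r,M)$.

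Next I would verify that $\bar{M}$ carries color-$i$ edges to color-$i$ edges. The key observation is that $M\evec_i = \mathbf{g}_i$, the $i$th column of $M$, where $\evec_i$ is the $i$th standard basis vector of $\gft^N$. In the $N$-cube the color-$i$ edges are exactly the pairs $\{v, v+\evec_i\}$, so in the quotient a color-$i$ edge joins $[v]$ to $[v+\evec_i]$; Proposition~\ref{prop:adjacentrep} guarantees that this clean description captures precisely the color-$i$ adjacencies of the quotient, since every representative of an adjacent coset has an adjacent partner and hence no further pairs of representatives contribute new edges. Applying $\bar{M}$, such an edge maps to the pair $\{Mv,\ Mv + \mathbf{g}_i\}$, which is exactly the color-$i$ edge of the Cayley graph emanating from $Mv$. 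Conversely, since $\bar{M}$ is surjective, every color-$i$ Cayley edge $\{h, h+\mathbf{g}_i\}$ is the image of the quotient edge at the coset $[v]$ with $Mv = h$. Thus $\bar{M}$ is a color-preserving bijection on edges as well.

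Combining the two steps, $\bar{M}$ is a colored-graph isomorphism, which is the assertion. The only point requiring genuine care, and the main obstacle, is reconciling the two descriptions of the quotient's edge set: the definition of the quotient declares $[v]$ and $[w]$ adjacent whenever \emph{some} pair of representatives is adjacent in the cube, whereas the computation above relies on the single rule $[v] \sim [v+\evec_i]$. Proposition~\ref{prop:adjacentrep} is precisely what bridges this gap, and once it is invoked the remainder is a direct translation through the first isomorphism theorem. (One notes in passing that if some generator $\mathbf{g}_i = M\evec_i$ vanishes, i.e.\ $\evec_i \in \ker(M)$, then both sides acquire the same loop at each vertex, so the correspondence remains consistent in either convention.)
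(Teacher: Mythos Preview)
Your proof is correct and takes a genuinely different, more direct route than the paper's. The paper first normalizes $M$ by row operations and column permutations to the form $[I\mid A]$, then builds the isomorphism in the opposite direction by zero-padding $\gft^r\hookrightarrow\gft^N$ and projecting to cosets; it then checks edges in two separate cases (standard basis generators $\evec_i$ for $i\le r$ versus the columns $\mathbf{a}_i$ of $A$). Your argument bypasses all of this by observing that $M$ itself, read as a surjective linear map, induces the vertex bijection via the first isomorphism theorem, and that the single identity $M\evec_i=\mathbf{g}_i$ handles every color uniformly. What your approach buys is brevity and conceptual clarity; what the paper's buys is an explicit coordinate description of the inverse map, which may be useful later. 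One minor remark: your invocation of Proposition~\ref{prop:adjacentrep} is not actually needed for the edge check, since if some representatives $v'\in[v]$ and $w'\in[w]$ satisfy $w'=v'+\evec_i$, then automatically $[w]=[v'+\evec_i]=[v+\evec_i]$ because $v'-v\in\ker(M)$; the quotient description $[v]\sim[v+\evec_i]$ is already exhaustive without appeal to that proposition.
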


\begin{proof}
We construct the isomorphism in three steps.  First, we do row operations and column permutations on $M$.  Row operations on $M$ correspond to linear isomorphisms of $\gft^r$, and thus, give rise to isomorphic Cayley graphs.  Permutations of the columns of $M$, corresponds to relabeling the generators.  If $M$ has rank $r$, then using row reduction and permuting columns, we can turn $M$ into the block form
\begin{equation}
    M=\begin{bmatrix}
        I&A
    \end{bmatrix}
    \label{eqn:mref}
\end{equation}
where $I$ is the $r\times r$ identity matrix.  Write the columns of $A$ as $\mathbf{a}_1,\ldots,\mathbf{a}_{N-r}$.  The Cayley graph is then an $r$-cube, together with edges labeled $r+i$ connecting each vertex $\mathbf{x}$ to $\mathbf{x}+\mathbf{a}_i$.  Indeed, Example~\ref{ex:k4} is drawn initially in this form: a 3-cube with diagonals drawn using $\vec{a}_1=(1,1,1)$.

Thus we have an isomorphism of the Cayley graph that may involve permuting the edge labels.  This is our first isomorphism, and for the remainder of the proof, we can assume $M$ is of the form (\ref{eqn:mref}).

Second, consider the map
\[Z\colon \gft^r\to \gft^N\]
\[Z(x_1,\ldots,x_r)=(x_1,\ldots,x_r,0,\ldots,0)\]
that appends $N-r$ copies of $0$ at the end of the vectors.

Third, consider the map
\[\pi\colon \gft^N\to \gft^N/\ker M\]
\[\pi(v)=[v]\]
that sends each vertex of the $N$-cube to its corresponding coset.

The desired isomorphism on vertices is the composition $\pi\circ Z$.

First, we prove that $\gft^N$ is the direct sum of the image of $Z$ and the kernel of $M$.  To see this, consider any element of their intersection.  It must be of the form
\[(u_1,\ldots,u_r,0,\ldots,0).\]
If this is in the kernel of $M$ in the form (\ref{eqn:mref}), we must have
\[
\begin{bmatrix}I&A\end{bmatrix}
\begin{bmatrix}u_1\\\vdots\\u_r\\0\\\vdots\\0\end{bmatrix}
=\begin{bmatrix}0\\\vdots\\0\end{bmatrix}
\]
which means $u_1=\cdots=u_r=0$.  Therefore the intersection of the image of $Z$ and the kernel of $M$ is trivial.

The dimension of the image of $Z$ is $r$, and the dimension of the kernel of $M$ is $N-r$.  Thus, $\gft^N$ is the direct sum of the image of $Z$ and the kernel of $M$, and the composition $\pi\circ Z$ is a linear isomorphism.  In particular, it is a bijection of vertices.

We now consider the edges in $\Cayley(\gft^r,M)$ and compare them with the edges in $\gft^N/\ker M$.  There are two kinds of edges in $\Cayley{(\gft^r,M)}$: edges of the type $\evec_i$ for $1\le i\le r$, and edges of the type $\mathbf{a}_i$ for $1\le i\le N-r$.  If two vertices $v, w$ in $\gft^r$ are connected by an edge of type $\evec_i$, then $Z(v)$ and $Z(w)$ differ in coordinate $i$ in $\gft^N$, and are connected by an edge of color $i$.  Thus, $[Z(v)]$ and $[Z(w)]$ are connected by an edge of color $i$.

If $v$ and $w$ in $\gft^r$ are connected by an edge of type $\mathbf{a}_i$; i.e., $v+\mathbf{a}_i=w$ in $\gft^r$.  In $\gft^N$, then, $Z(v)+Z(\mathbf{a}_i)=Z(w)$.  There is an edge of color $r+i$ from $Z(v)$ to $Z(v)+\evec_{r+i}$.

We now show that
\[Z(\mathbf{a}_i)+\evec_{r+i}\in\ker M.\]
Writing out the matrix multiplication in block matrix form, we get:
\[\begin{bmatrix}I&A\end{bmatrix}\begin{bmatrix}\mathbf{a}_i\\\evec{i}\end{bmatrix}
=\begin{bmatrix}\mathbf{a}_i+\mathbf{a}_i\end{bmatrix}
=\begin{bmatrix}0\\\vdots\\0\end{bmatrix}
\]
Therefore, $Z(v)+\evec_{r+i}$ is in the same coset of $\ker M$ as $Z(v)+Z(\mathbf{a}_i)=Z(w)$.  Hence, in the quotient $\gft^N/\ker M$, there is an edge of color $r+i$ from $\pi(Z(v))$ to $\pi(Z(w))$.
\end{proof}

This procedure can be reversed in the following way: given a linear code $C$ with generating matrix $G$ of size $k\times N$, row operations on $G$ do not change $C$, and row reduction results in $G$ being in reduced row echelon form.  Applying permutations of columns of $G$, corresponding to permuting the colors, can be done to write $G$ in the form
\[G=\begin{bmatrix}
A^T&I
\end{bmatrix}.
\]
where $A$ is an $(N-k)\times k$ matrix.  Writing
\[M=\begin{bmatrix}
I&A
\end{bmatrix}
\]
it is straightforward to check that the kernel of $M$ is the row space of $G$, i.e., the code $C$.

Thus, the work of Gao--Marx-Kuo--MacDonald \cite{GKM_Cayley} applies to connected Adinkras.  To translate the language between the two areas, we prove some propositions.

\begin{proposition}
The matrix $M$ is generic if and only if the code $C$ does not contain the all-ones vector $\ones$.
\label{prop:genericones}
\end{proposition}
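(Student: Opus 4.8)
The plan is to exploit the identity $C=\ker M$ that was just established in the reversed construction, together with the elementary observation that multiplying $M$ against the all-ones vector reads off the column sum of $M$.

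First I would record that, for the matrix $M$ built from the generating matrix $G$ as in the preceding paragraph, one has $\ker M=C$. Next I would observe that applying $M$ to $\ones\in\gft^N$ yields
\[
M\ones=\sum_{j=1}^{N}(\text{$j$th column of }M),
\]
since every entry of $\ones$ equals $1$; in other words, $M\ones$ is precisely the mod-$2$ sum of the columns of $M$. By the definition of genericity, $M$ is generic if and only if this column sum is non-zero, i.e. if and only if $M\ones\neq 0$.

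Finally I would chain these equivalences: the condition $M\ones\neq 0$ says exactly that $\ones\notin\ker M$, and since $\ker M=C$, this is the statement that $\ones\notin C$. Reading the chain backwards, $M$ fails to be generic precisely when $\ones\in C$, which is the desired equivalence.

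I do not expect any genuine obstacle here; the argument is a one-line unwinding of definitions once the identification $C=\ker M$ is in hand. The only point that warrants a sentence of care is that both genericity and the containment $\ones\in C$ are unaffected by the row operations and column permutations used earlier to bring $M$ into the normal form $[\,I\mid A\,]$: row operations replace $M$ by $PM$ with $P$ invertible over $\gft$, which preserves both $\ker M$ and the vanishing of the column sum (as $P$ is injective), while column permutations fix $\ones$ and merely reorder the columns, hence preserve their sum. So the equivalence may be verified in whichever presentation of $M$ is most convenient.
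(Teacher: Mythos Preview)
Your proof is correct and is essentially the same as the paper's: both observe that $M\ones$ equals the mod-$2$ sum of the columns of $M$, so $M$ is generic if and only if $\ones\notin\ker M=C$. Your additional remark about invariance under row operations and column permutations is a nice point of care but is not needed here, since the proposition is stated for $M$ with $C=\ker M$ directly.
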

\begin{proof}
The mod $2$ sum of the columns of $M$ is $M\ones$, that is, the product of the matrix $M$ and the all-ones column vector $\ones$.  This is the zero vector if and only if $\ones\in\ker(M)$.
\end{proof}

\begin{proposition}
Suppose $G$ is a connected graph.  The number of Sylow-$2$ cyclic factors of $K(G)$ is equal to one less than the number of even invariant factors of the unsigned Laplacian of $G$.\label{prop:countevenfactors}
\end{proposition}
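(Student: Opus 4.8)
The plan is to reduce everything to the Smith Normal Form of the unsigned Laplacian $L$ and then track divisibility by $2$. First I would recall the standard structural fact for a connected graph: since every row of $L$ sums to zero we have $L\ones = 0$, so $\ones$ lies in the kernel; and connectivity forces the rank of $L$ over $\mathbb{Q}$ to be exactly $n-1$, where $n = \#V$. Consequently the Smith Normal Form of $L$ has invariant factors $d_1 \mid d_2 \mid \cdots \mid d_{n-1} \mid d_n$ with $d_1,\ldots,d_{n-1}$ nonzero positive integers and $d_n = 0$; in particular exactly one invariant factor vanishes.

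Next I would invoke the definition of the critical group. By definition $K(G)$ is the torsion part of $\operatorname{coker}(L) = \mathbb{Z}^n/\operatorname{im}(L)$, which in terms of the Smith Normal Form is $\bigoplus_{i=1}^{n-1}\mathbb{Z}/d_i\mathbb{Z}$; the single free summand $\mathbb{Z}$ coming from $d_n = 0$ is discarded. Thus the invariant factors of the finite abelian group $K(G)$ are precisely $d_1,\ldots,d_{n-1}$.

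Then I would pass to the $2$-primary part. The Sylow-$2$ subgroup of $\mathbb{Z}/d_i\mathbb{Z}$ is cyclic of order $2^{v_2(d_i)}$, where $v_2$ denotes the $2$-adic valuation, and this summand is nontrivial precisely when $d_i$ is even. Because the $d_i$ form a divisibility chain, the valuations $v_2(d_i)$ are nondecreasing, so these cyclic $2$-groups are already in primary invariant-factor form and none merge or split. Hence the number of Sylow-$2$ cyclic factors of $K(G)$ equals the number of indices $i \in \{1,\ldots,n-1\}$ for which $d_i$ is even.

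Finally I would count the even invariant factors of $L$ itself. These are $d_1,\ldots,d_{n-1},0$, and an invariant factor is even exactly when it is divisible by $2$; in particular $d_n = 0$ is even. Therefore the number of even invariant factors of $L$ is one more than the number of even $d_i$ with $i \le n-1$, which by the previous paragraph is one more than the number of Sylow-$2$ cyclic factors of $K(G)$. This is the claimed equality. I do not anticipate a genuine obstacle: the argument is bookkeeping on the Smith Normal Form, and the only point requiring care is the convention that the vanishing invariant factor $0$ counts as even, which is exactly what produces the ``one less'' in the statement.
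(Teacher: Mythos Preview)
Your proof is correct and follows essentially the same approach as the paper's own proof: identify the Smith Normal Form of $L$ as $(d_1,\ldots,d_{n-1},0)$ for a connected graph, write $K(G)\cong\bigoplus_{i=1}^{n-1}\zz/d_i\zz$, and observe that the Sylow-$2$ cyclic factors are exactly those summands with $d_i$ even while the extra invariant factor $0$ of $L$ is also even. Your write-up is a bit more careful than the paper's---you explicitly justify why the $2$-primary pieces stay in invariant-factor form via the monotonicity of $v_2(d_i)$ and you flag the convention that $0$ counts as even---but the underlying argument is the same.
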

\begin{proof}
If the invariant factors of the unsigned Laplacian are
\[x_1,\ldots,x_{\#V}\]
with $x_1\,|\, \cdots \,|\,x_{\#V}$, then $x_{\#V}=0$ and the other invariant factors are non-zero (since $G$ is connected).  Then
\[K(G)\cong \zz/x_1\zz\oplus\cdots\oplus \zz/x_{\#V-1}\zz.\]
The Sylow-$2$-cyclic factors are the terms $\zz/x_i\zz$ where $x_i$ is even.
\end{proof}

\begin{corollary}\label{cor:notallone}
If the underlying graph of an Adinkra is a $N$-cube quotient by a doubly even code $C$ that does not contain $\ones$, then the invariant factors of the Adinkra are $(1^{\#V/2},(N^2-N)^{\#V/2})$.
\end{corollary}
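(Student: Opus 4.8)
The plan is to combine the external count of Sylow-$2$ cyclic factors due to Gao--Marx-Kuo--McDonald with the mod-$2$ coincidence of the signed and unsigned Laplacians, and then pinch the invariant factors of $L$ from both sides using Theorem~\ref{thm:main_invf} and Corollary~\ref{cor:lasteven}.

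First I would record the combinatorial translation. The underlying unsigned graph is the quotient of the $N$-cube by the $k$-dimensional code $C$, which by the proposition identifying cube quotients with Cayley graphs is $\Cayley(\gft^r,M)$ for some $r\times N$ matrix $M$ of rank $r=N-k$ with $\ker M=C$. In particular $\#V=2^{N-k}=2^{r}$, so $2^{r-1}=\#V/2$, and the graph is connected since the columns of $M$ generate $\gft^{r}$. Because $C$ does not contain $\ones$, Proposition~\ref{prop:genericones} says $M$ is generic, so Theorem~1.1 of \cite{GKM_Cayley} applies: the number of Sylow-$2$ cyclic factors of $K(\Cayley(\gft^r,M))$ equals $2^{r-1}-1=\#V/2-1$.

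Next I would convert this into a statement about even invariant factors of the signed Laplacian. By Proposition~\ref{prop:countevenfactors}, the unsigned Laplacian of this connected graph has exactly $\#V/2$ even invariant factors. Since $1=-1$ in $\gft$, the signed Laplacian $L$ and the unsigned Laplacian reduce to the same matrix modulo $2$ (as noted at the start of Section~\ref{sec:tworank}); they therefore have the same $\gft$-rank, and hence the same number of even invariant factors over $\zz$, namely $\#V/2$ for $L$ as well. Now Corollary~\ref{cor:lasteven} says the last $\#V/2$ invariant factors of $L$ are all even, so these already account for every one of the $\#V/2$ even factors; consequently the first $\#V/2$ invariant factors of $L$ must all be odd. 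But Theorem~\ref{thm:main_invf} says the first $\#V/2$ invariant factors of $L$ are powers of $2$, and the only odd power of $2$ is $1$, so they are all equal to $1$. Finally, appealing to the explicit form in Theorem~\ref{thm:inv_L}, the first $\#V/2$ factors of $L$ are $x_1,x_1,\ldots,x_{\#V/4},x_{\#V/4}$ with the $x_i$ the first half of the invariant factors of $X$; their all being $1$ forces each $x_i=1$, whence the last $\#V/2$ factors $N(N-1)/x_{\#V/4},\ldots,N(N-1)/x_1$ (each doubled) all equal $N(N-1)=N^2-N$. This gives the invariant factors $(1^{\#V/2},(N^2-N)^{\#V/2})$.

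The bookkeeping here is routine; the step that genuinely carries the argument is the mod-$2$ comparison in the previous paragraph, which is what allows the purely unsigned result of \cite{GKM_Cayley} to control the $2$-part of the \emph{signed} critical group. The main point to watch is that the two bounds pinch exactly: the Cayley-graph count forces the number of even invariant factors of $L$ to be (at most, and in fact) $\#V/2$, while Corollary~\ref{cor:lasteven} forces it to be at least $\#V/2$, and it is precisely this equality that collapses the first half to all ones. I would also note that the statement presumes $N\ge 2$, so that $L$ is nonsingular and $N^2-N\neq 0$, the small degenerate cases being excluded as elsewhere in the paper.
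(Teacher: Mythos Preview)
Your proof is correct and follows essentially the same route as the paper: identify the underlying graph as a Cayley graph $\Cayley(\gft^{r},M)$, use Proposition~\ref{prop:genericones} and the Gao--Marx-Kuo--McDonald theorem to count Sylow-$2$ cyclic factors, pass through the mod-$2$ coincidence of signed and unsigned Laplacians to conclude the first $\#V/2$ invariant factors of $L$ are odd, and then invoke the structural result that these are powers of $2$, hence equal to $1$. Your explicit pinching via Corollary~\ref{cor:lasteven} and your final unpacking through Theorem~\ref{thm:inv_L} are slightly more detailed than the paper's argument (which cites Theorem~\ref{thm:main_invf} directly for both halves), but the substance is the same.
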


\begin{proof}
Let $k$ be the dimension of $C$.  Then the number of vertices in the Adinkra is $\#V=2^{N-k}$.  Let $M$ be the $(N-k)\times N$ matrix with $\ker(M)=C$.  Then $\Cayley(\gft^{N-k},M)$ is isomorphic to the underlying unsigned graph of the Adinkra.

By Proposition~\ref{prop:genericones}, $\ones \not\in C$ implies that $M$ is generic, and then Theorem 1.1 in \cite{GKM_Cayley} says that the number of Sylow-$2$ cyclic factors of $K(G)$ is $2^{N-k-1}-1$.  Then by Proposition~\ref{prop:countevenfactors}, the number of even invariant factors for the unsigned Laplacian for the Adinkra is $2^{N-k-1}=\#V/2$.  Thus, the first $\#V/2$ invariant factors for the unsigned Laplacian (and thus, for the signed Laplacian) are odd.

By Theorem~\ref{thm:main_invf}, the first $\#V/2$ invariant factors are all 1, and the remaining $\#V/2$ invariant factors are $N(N-1)$.
\end{proof}

Theorem~\ref{thm:prism} and Corollary~\ref{coro:multi4} are special cases of this, though they were proved using special properties of Adinkras as signed graphs; the converse of the statement is the equality assertion of Conjecture~6.1 of \cite{GKM_Cayley}, which is still open (but our data provide further evidence of it).
On the other hand, we can use our result to prove a special case of the inequality assertion of their Conjecture 6.1 by considering an arbitrary Adinkra whose underlying graph is the Cayley graph, and apply either Theorem~\ref{thm:inv_L} or Proposition~\ref{prop:LinvFineq}.

\begin{proposition}
If $\ker(M)$ is a doubly even code, then there are at least $2^{N-k-1}-1$ even invariant factors of the critical group of $\Cayley{(\gft^{N-k},M)}$.
\end{proposition}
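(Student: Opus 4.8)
The plan is to realize $\Cayley(\gft^{N-k},M)$ as the underlying unsigned graph of an Adinkra and then push the even-factor count of Proposition~\ref{prop:LinvFineq} from the signed Laplacian over to the unsigned one, using that the two agree modulo $2$. First I would note that, since $\ker(M)=C$ is doubly even, a totally odd signature on the quotient of the $N$-cube by $C$ exists by \cite{doranApplicationCubicalCohomology2017}; choosing one produces an Adinkra $\mathcal{A}$ on $N$ colors whose underlying graph is exactly $\Cayley(\gft^{N-k},M)$, with $\#V=2^{N-k}$ vertices (I assume $M$ has full rank $N-k$, so the graph is connected, which is implicit in the statement). By Proposition~\ref{prop:countevenfactors} the number of even invariant factors of the critical group is one less than the number of even invariant factors of the unsigned Laplacian, so the claim reduces to showing that the unsigned Laplacian has at least $\#V/2=2^{N-k-1}$ even invariant factors.

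Next I would invoke the observation opening Section~\ref{sec:tworank}: over $\gft$ one has $1=-1$, so the signed Laplacian $L$ of $\mathcal{A}$ and the unsigned Laplacian of $\Cayley(\gft^{N-k},M)$ reduce to the \emph{same} matrix $\overline{L}$; in particular they have the same rank over $\gft$, hence the same $2$-corank. By Lemma~\ref{lem:XvsTildeX} with $p=2$, the number of invariant factors divisible by $2$ of each of these Laplacians equals the corank of $\overline{L}$; hence the signed and unsigned Laplacians have the \emph{same} number of even invariant factors. Thus it is enough to bound the number of even invariant factors of the signed Laplacian $L$ from below by $\#V/2$.

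For this final bound I would apply Proposition~\ref{prop:LinvFineq} with the prime $p=2$. This is legitimate precisely because $2$ divides $N(N-1)$ for every $N\ge 2$ (one of two consecutive integers is even), so no hypothesis on $N \bmod 4$ or on the structure of $C$ is required; the proposition then yields at least $\#V/2$ invariant factors of $L$ divisible by $2$. Feeding this through the mod-$2$ identification of the previous paragraph, the unsigned Laplacian has at least $\#V/2=2^{N-k-1}$ even invariant factors, and Proposition~\ref{prop:countevenfactors} delivers the desired $2^{N-k-1}-1$ even invariant factors of the critical group. Alternatively, Corollary~\ref{cor:lasteven}, which states that the last half of the invariant factors of $L$ are even, gives the $\#V/2$ lower bound directly, at the cost of using Theorem~\ref{thm:inv_L} rather than Lorenzini's Theorem~\ref{thm:Lorenzini}.

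I expect the substance of the argument to be light, so the main care is bookkeeping. One must keep three counts distinct---even invariant factors of a Laplacian, even invariant factors (equivalently, Sylow-$2$ cyclic factors) of the critical group, and the $2$-corank---and confirm that the lone zero invariant factor forced by connectedness is exactly what produces the ``$-1$''. The only genuine hypothesis-checking is that $L$ is nonzero and diagonalizable so that Proposition~\ref{prop:LinvFineq} (through Theorem~\ref{thm:Lorenzini}) applies, which holds since for $N\ge 2$ the signed Laplacian is a nonsingular symmetric integer matrix; the degenerate case $N=1$ forces $C=\{0\}$ and makes the bound $2^{N-k-1}-1=0$ vacuous.
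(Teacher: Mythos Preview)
Your proposal is correct and follows exactly the route the paper indicates: realize the Cayley graph as the underlying graph of an Adinkra (possible because $\ker(M)$ is doubly even), use the mod-$2$ identification of the signed and unsigned Laplacians from Section~\ref{sec:tworank}, and then invoke either Proposition~\ref{prop:LinvFineq} or Theorem~\ref{thm:inv_L} (via Corollary~\ref{cor:lasteven}) to get the $\#V/2$ lower bound, subtracting one via Proposition~\ref{prop:countevenfactors}. The paper's own proof is the single sentence preceding the proposition, and your write-up is a faithful expansion of it.
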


Similarly, Conjecture~6.2 of \cite{GKM_Cayley} hypothesizes that the number of even invariant factors of the critical group of $\Cayley{(\gft^{N-k},M)}$ is odd if and only if the $2$-adic valuations of the nonzero eigenvalues of its Laplacian are not the same.
Theorem~\ref{thm:inv_L} implies that whenever $\ker(M)$ is a doubly even code, the number of even invariant factors is always odd, so we either have given a proof of the special case, or have provided a counterexample, depending on the eigenvalues of the corresponding Cayley graphs.

\section{Independence of signatures}
\label{sec:dependsig}
So far we have focused on the role of the topology of the Adinkra, or equivalently, its code, on the invariant factors of the signed Laplacian.  For instance, in Section~\ref{sec:table} we provided a list of invariant factors for Adinkras, identifying each according to its code, with no reference to the signature.  But a given graph might have many totally odd signatures.  Nevertheless, based on some preliminary experimental findings, we propose:
\begin{conjecture}
The invariant factors do not depend on the signature of the Adinkra.
\end{conjecture}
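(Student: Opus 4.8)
The plan is to reduce the conjecture to a statement purely about the $2$-primary part of the invariant factors, and then to recast that statement in the language of integral Clifford modules.

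First I would dispose of the odd part. By Theorem~\ref{thm:main_invf}, the first $\#V/2$ invariant factors of $L$ are powers of $2$ (in fact powers of $2$ dividing $N$) and the remaining $\#V/2$ are $N(N-1)$ divided by powers of $2$. Hence the odd part $\mathfrak{o}(f)$ of every invariant factor $f$ is already pinned down --- it is $1$ for each of the first half and $\mathfrak{o}(N(N-1))$ for each of the second half --- and this description involves only $N$ and $\#V$, not the signature. The analogous statement for $X$ follows from Proposition~\ref{prop:symm_invf}. Thus it suffices to prove that the $2$-adic valuations of the invariant factors of $X$ are independent of the choice of totally odd signature; by Theorem~\ref{thm:inv_L} the corresponding statement for $L$ is then automatic. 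Equivalently, I must show that the Smith Normal Form of $X$ over the $2$-adic integers $\zz_2$ does not change when the signature changes.

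Next I would reduce to switching classes and reinterpret them. A single vertex switch conjugates $A$ (and hence $X$) by a diagonal $\pm1$ matrix, so switching-equivalent signatures produce identical invariant factors; it therefore remains only to compare signatures lying in different switching classes. Here the key observation is structural: by (\ref{eqn:aialg1}) and (\ref{eqn:aialg2}) the color-adjacency matrices satisfy $A_i^2=I$ and $A_iA_j=-A_jA_i$, so each totally odd signature endows $\zz^{\#V}$ with the structure of a module over the integral Clifford algebra $\zz\langle A_1,\dots,A_N\rangle/(A_i^2=1,\ A_iA_j+A_jA_i=0)$, all of these module structures sharing the same underlying support pattern dictated by the colored graph. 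A vertex switch is exactly conjugation of the $A_i$ by a diagonal signature matrix, i.e.\ an isomorphism of Clifford modules. The conjecture would follow from the assertion that any two of these integral Clifford modules are isomorphic, since a grading-preserving module isomorphism $T=\mathrm{diag}(T_B,T_F)$ with $T_B,T_F\in GL_{\#V/2}(\zz)$ intertwining $\{A_i\}$ and $\{A_i'\}$ yields $X'=T_B X T_F^{-1}$, forcing equal Smith Normal Forms. More modestly, it suffices to establish this isomorphism after base change to $\zz_2$, which is all the first paragraph requires.

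The central step, then, is to show that the relevant family of Clifford modules forms a single isomorphism class (at least $2$-adically), and this is where I expect the real difficulty. I would attack it by making the automorphism group of the colored Adinkra act on signatures: translations of $\gft^N/C$ preserving the bipartition, together with color permutations, descend to graph automorphisms, each of which pulls a signature back to a permutation-conjugate --- hence Smith-Normal-Form-preserving --- signature, while switchings act as above. The goal is to prove that switchings and these automorphisms act transitively on the totally odd signatures up to isomorphism of the associated module; the set of switching classes of totally odd signatures is itself a torsor under a cohomology group measuring the failure of the bicolor $4$-cycles to span the cycle space (the cubical cohomology of \cite{doranApplicationCubicalCohomology2017}), so the combinatorial task is to match this torsor against the automorphism action. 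The main obstacle is genuinely $2$-adic: modulo $2$ the signs disappear and $\overline{X}$, hence its rank --- the number of odd invariant factors --- is manifestly signature-independent, but the finer $2$-adic valuations require comparing the modules over $\zz/2^k$, where the signs re-enter, and the integral representation theory of Clifford algebras is precisely the subtle regime that the rest of the paper controls only up to powers of $2$. A successful resolution would likely need either an explicit integral intertwiner produced from the algebra generated by the $A_i$ (for instance realizing two module structures as conjugate inside a single larger Clifford action) or a direct induction on prisms and quotients tracking the $2$-adic elementary divisors along the lines of Theorem~\ref{thm:prism}.
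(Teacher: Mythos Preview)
The statement you are attempting to prove is presented in the paper as an open \emph{conjecture}; the paper does not give a proof. What the paper does establish is exactly the easy reductions you identify: vertex switchings preserve the invariant factors (since they conjugate $A$ and $L$ by diagonal $\pm1$ matrices), color-preserving graph automorphisms preserve them (since they conjugate by permutation matrices), and by the cited result from \cite{doranApplicationCubicalCohomology2017} these two operations act with at most two orbits on the set of totally odd signatures, a single orbit precisely when $\ones\notin C$. In the latter case Corollary~\ref{cor:notallone} already settles the invariant factors, so the entire conjecture reduces to comparing two specific switching classes whenever $\ones\in C$; the paper reports only computational verification in small cases.

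Your proposal tracks this reduction correctly, and your further idea of interpreting the $A_i$ as an integral Clifford module and seeking a ($2$-adic) module isomorphism between the two remaining signature classes is a reasonable strategy that goes beyond what the paper attempts. However, you yourself flag the central step---showing the two Clifford modules are isomorphic over $\zz_2$---as ``where I expect the real difficulty,'' and you do not carry it out. That is exactly the gap: the paper's automorphism-plus-switching action has two orbits when $\ones\in C$, so transitivity fails in precisely the cases that matter, and you offer no mechanism (an explicit intertwiner, an induction, or otherwise) to bridge them. As written, then, your proposal is a plausible outline rather than a proof, and it leaves open the same hard case the paper leaves open.
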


So far, we have theorems (Theorem~\ref{thm:prism}, Corollary~\ref{coro:multi4}, and Corollary~\ref{cor:notallone}) that say that the invariant factors are $(1^{\#V/2},(N(N-1))^{\#V/2})$ in a wide range of cases, such as when $N$ is not a multiple of $4$, or when the graph is a prism, or when the all-one codeword $\ones$ is not in the code for the Adinkra; and so changing the signature in those cases would not change the invariant factors.

In addition, there is an operation among totally odd signatures called \emph{vertex switching} which, as we will see, does not affect the invariant factors.  We review the basic definitions and facts about vertex switching from \cite{zaslavsky}.  We will initially state the definition for any signed graph, but our interest will be for Adinkras.

\begin{definition}
If $G$ is a signed graph and $v$ is a vertex of $G$, then the result of \emph{switching at $v$} is a graph $G_v$ whose vertices and edges are the same as $G$, but with sign function $S_v(\mu)$ defined as:
\[S_v(\mu)(e)=\begin{cases}
\mu(e),&\mbox{if $e$ is not incident to $v$, and}\\
-\mu(e),&\mbox{if $e$ is incident to $v$.}
\end{cases}
\]

Switchings at various vertices commute, and so compositions of such are determined by which vertices we switched at.  Thus, we can generalize the notion of switching to \emph{switching at a set of vertices}, as follows: if $W$ is a set of vertices of $G$, define the signature $S_W(\mu)$ to be:
\[S_W(\mu)(e)=\begin{cases}
\mu(e),&\mbox{if $e$ is incident to either 0 or 2 vertices in $W$, and}\\
-\mu(e),&\mbox{if $e$ is incident to exactly 1 vertex in $W$.}
\end{cases}
\]
\end{definition}

\begin{example}
For instance, if we take the following signed graph:
\begin{center}
\begin{tikzpicture}
\GraphInit[vstyle=Welsh]
\SetVertexNormal[MinSize=5pt]
\SetUpEdge[labelstyle={draw},style={ultra thick}]
\tikzset{Dash/.style={dashed,draw,ultra thick}}
\Vertex[x=0,y=0,Math,L={A},Lpos=180]{A}
\Vertex[x=-1,y=1,Math,L={B},Lpos=90]{B}
\Vertex[x=2,y=.5,Math,L={C},Lpos=0]{C}
\Vertex[x=.2,y=-1.4,Math,L={D},Lpos=180]{D}
\Vertex[x=-2,y=-.5,Math,L={E},Lpos=180]{E}
\Vertex[x=2.2,y=-1.8,Math,L={F},Lpos=0]{F}
\AddVertexColor{black}{A,B,C,D,E,F}
\Edge(A)(B)
\Edge(A)(C)
\Edge[style=Dash](A)(D)
\Edge[style=Dash](B)(E)
\Edge(C)(F)
\Edge(D)(F)
\end{tikzpicture}
\end{center}
the effect of switching at the vertex $A$ is the following signed graph:
\begin{center}
\begin{tikzpicture}
\GraphInit[vstyle=Welsh]
\SetVertexNormal[MinSize=5pt]
\SetUpEdge[labelstyle={draw},style={ultra thick}]
\tikzset{Dash/.style={dashed,draw,ultra thick}}
\Vertex[x=0,y=0,Math,L={A},Lpos=180]{A}
\Vertex[x=-1,y=1,Math,L={B},Lpos=90]{B}
\Vertex[x=2,y=.5,Math,L={C},Lpos=0]{C}
\Vertex[x=.2,y=-1.4,Math,L={D},Lpos=180]{D}
\Vertex[x=-2,y=-.5,Math,L={E},Lpos=180]{E}
\Vertex[x=2.2,y=-1.8,Math,L={F},Lpos=0]{F}
\AddVertexColor{black}{A,B,C,D,E,F}
\Edge[style=Dash](A)(B)
\Edge[style=Dash](A)(C)
\Edge(A)(D)
\Edge[style=Dash](B)(E)
\Edge(C)(F)
\Edge(D)(F)
\end{tikzpicture}
\end{center}
The edges incident to $A$ have all switched signs, and the other edges are unaffected.

If we take this resulting signature and furthermore switch at $C$, we get the following:
\begin{center}
\begin{tikzpicture}
\GraphInit[vstyle=Welsh]
\SetVertexNormal[MinSize=5pt]
\SetUpEdge[labelstyle={draw},style={ultra thick}]
\tikzset{Dash/.style={dashed,draw,ultra thick}}
\Vertex[x=0,y=0,Math,L={A},Lpos=180]{A}
\Vertex[x=-1,y=1,Math,L={B},Lpos=90]{B}
\Vertex[x=2,y=.5,Math,L={C},Lpos=0]{C}
\Vertex[x=.2,y=-1.4,Math,L={D},Lpos=180]{D}
\Vertex[x=-2,y=-.5,Math,L={E},Lpos=180]{E}
\Vertex[x=2.2,y=-1.8,Math,L={F},Lpos=0]{F}
\AddVertexColor{black}{A,B,C,D,E,F}
\Edge[style=Dash](A)(B)
\Edge(A)(C)
\Edge(A)(D)
\Edge[style=Dash](B)(E)
\Edge[style=Dash](C)(F)
\Edge(D)(F)
\end{tikzpicture}
\end{center}
Comparing this to our initial signature, we see that the edge $AC$ is back to solid (what it used to be originally) and edges that switched are the ones that are incident to $A$ or $C$ but not both.
\end{example}

This procedure is interesting in the case of Adinkras because it preserves the totally odd condition that is used in the definition of Adinkras:
\begin{proposition}
Suppose we have an Adinkra with totally odd signature $\mu$.  Let $v$ be a vertex of the Adinkra.  Then $S_v(\mu)$ is also a totally odd signature on the Adinkra.
\end{proposition}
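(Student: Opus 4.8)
The plan is to observe that switching at a vertex alters only the signature and leaves the underlying colored graph---its vertices, edges, color assignment, and bipartition, and hence the function $h$---completely untouched. Consequently the first, second, and fourth conditions in the definition of an Adinkra are automatically inherited by $S_v(\mu)$, and the only thing left to verify is the third (totally odd) condition: that every bicolor $4$-cycle still contains an odd number of dashed edges.

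First I would reformulate the totally odd condition multiplicatively. A bicolor cycle $C$ contains an odd number of dashed edges precisely when the product of its edge signs $\prod_{e\in C}\mu(e)$ equals $-1$. This recasts the goal as showing that switching at $v$ preserves the sign product around every bicolor cycle, and it is the parity of this product, rather than the individual signs, that we must control.

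The key step is a purely local count. Fixing a bicolor $4$-cycle $C$, I would ask how many of its four edges are incident to $v$. Since $C$ is a simple cycle, every vertex lying on $C$ has degree exactly $2$ within $C$, which leaves only two cases: either $v$ is not a vertex of $C$, in which case none of the edges of $C$ is incident to $v$ and all four signs are unchanged; or $v$ is one of the four vertices of $C$, in which case exactly two edges of $C$ meet $v$ and precisely those two signs are flipped. In either case an even number of the signs of $C$ are negated, so $\prod_{e\in C}\mu(e)$---equivalently the parity of the number of dashed edges in $C$---is preserved.

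Applying this to every bicolor cycle shows that $S_v(\mu)$ is again totally odd, which completes the proof. I do not expect a serious obstacle: the whole content is the degree-$2$ observation, and the only point demanding care is to confirm that an edge of $C$ gets flipped exactly when it is one of the two cycle-edges meeting $v$, so that edges incident to $v$ but not lying on $C$ are correctly ignored. The same argument in fact extends verbatim to switching at any set of vertices $W$, using that a simple cycle crosses the boundary of $W$ an even number of times and hence again flips an even number of its own edges.
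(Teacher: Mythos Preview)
Your proposal is correct and follows essentially the same argument as the paper: both split into the two cases according to whether $v$ lies on the bicolor cycle, and use that in the latter case exactly two cycle-edges are flipped, preserving the parity of dashed edges. Your multiplicative reformulation and the explicit degree-$2$ observation add detail, but the approach is the same.
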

\begin{proof}
Any bicolor cycle that does not contain $v$ is unaffected by the vertex switching.  If $v$ is in the bicolor cycle, the vertex switching changes the sign of two edges of the cycle.  Therefore the number of dashed edges modulo 2 is unchanged.
\end{proof}

Now we show that vertex switches do not change the invariant factors for the signed Laplacian:
\begin{proposition}
Let $G$ be a signed graph with signature $\mu$, and choose an ordering of the vertices of $G$ so that the signed adjacency and Laplacian operators are matrices.

Let $v$ be the $i$th vertex of $G$.  Then the signed adjacency matrix for $S_v(\mu)$ is obtained from the signed adjacency matrix for $\mu$ by multiplying by $-1$ the $i$th row and $i$th column.

Likewise the signed Laplacian for $S_v(\mu)$ is obtained from the signed Laplacian for $\mu$ by the same operation.
\end{proposition}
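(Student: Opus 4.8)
The plan is to prove both claims at once by exhibiting the switching operation as conjugation by a single diagonal sign matrix. Let $P$ be the $\#V \times \#V$ diagonal matrix whose $i$th diagonal entry is $-1$ and whose remaining diagonal entries are $+1$. Since $P$ is diagonal with entries $\pm 1$, it satisfies $P = P^T$ and $P^2 = I$, so the two-sided product $M \mapsto PMP$ is exactly the operation of multiplying the $i$th row and $i$th column of a matrix $M$ by $-1$ (the $(i,i)$ entry being scaled by $(-1)^2 = 1$, hence left fixed). The goal is therefore to verify that the signed adjacency matrix of $S_v(\mu)$ equals $PAP$ and the signed Laplacian of $S_v(\mu)$ equals $PLP$.

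First I would check the adjacency claim entrywise, directly from the definition of $S_v(\mu)$. An off-diagonal entry $A_{jk}$ with $j,k\neq i$ records the sign of an edge between $v_j$ and $v_k$; such an edge is not incident to $v$, so $S_v(\mu)$ leaves its sign unchanged, which agrees with the $(j,k)$ entry of $PAP$. An off-diagonal entry lying in row $i$ or column $i$ records the sign of an edge incident to $v$, which $S_v(\mu)$ negates; this matches the sign reversal produced by $PAP$. The diagonal entries of $A$ are all $0$ since the graph is simple, consistent with their images under $PAP$. Hence the signed adjacency matrix for $S_v(\mu)$ is $PAP$, which is precisely the asserted row-and-column operation.

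For the Laplacian, I would invoke the decomposition $L = D - A$ from Section~\ref{sec:signedgraph}, where $D$ is the diagonal degree matrix. Vertex switching alters only edge signs and not the underlying graph, so every degree $\degr(v_j)$ is preserved and $D$ is unchanged. Writing $L'$ and $A'$ for the Laplacian and adjacency matrix of $S_v(\mu)$, we have $L' = D - A' = D - PAP$. Because $D$ and $P$ are both diagonal they commute, and $P^2 = I$ gives $D = PDP$; therefore $L' = PDP - PAP = P(D - A)P = PLP$, so the Laplacian undergoes the same operation as the adjacency matrix.

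There is no substantial obstacle here: the statement unwinds directly from the definition of $S_v(\mu)$ together with the identity $L = D - A$. The only point requiring a moment's care is the diagonal—confirming that the $(i,i)$ entries (zero for $A$, the degree for $L$) are fixed by the two-sided multiplication by $-1$, and that switching leaves the degrees untouched—both of which the conjugation formula $M \mapsto PMP$ handles automatically.
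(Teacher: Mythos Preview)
Your proof is correct and takes essentially the same approach as the paper: both verify the adjacency claim by a direct entrywise case analysis (edges not touching $v$ unchanged, edges touching $v$ negated, diagonal fixed), and then pass to the Laplacian. Your framing via conjugation $M\mapsto PMP$ and the use of $L=D-A$ is a mild elaboration of the paper's one-line ``the same argument works,'' but the underlying idea is identical.
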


\begin{proof}
Each $(j,k)$ entry of the signed adjacency matrix is either $0$, $1$, or $-1$, depending on the existence and sign of the edge connecting the $j$th vertex and the $k$th vertex.  If neither $j$ nor $k$ is $i$, then this is unaffected by a vertex switch.  If $j=k=i$, then this is unaffected as well.  But if $j=i$ and $k\not=i$ or vice-versa, then this reverses sign.

The same argument works for the signed Laplacian.
\end{proof}

Multiplying a row by $-1$ is a row operation, and multiplying a column by $-1$ is a column operation.  These do not affect the invariant factors of a matrix.  Therefore,
\begin{corollary}
Switching vertices does not change the invariant factors of the signed Laplacian.
\end{corollary}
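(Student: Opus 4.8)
The plan is to invoke the immediately preceding proposition directly, so that almost all of the work is already done. That proposition tells us that switching at the $i$th vertex replaces the signed Laplacian $L$ by the matrix obtained from $L$ by negating its $i$th row and its $i$th column. First I would recast this operation in matrix form: let $D_i$ be the diagonal integer matrix that agrees with the identity except for a $-1$ in the $(i,i)$ position. Then negating the $i$th row and the $i$th column of $L$ is exactly the conjugation $D_i L D_i$. Since $\det(D_i) = -1$ is a unit in $\zz$ and $D_i^{-1} = D_i$ is again an integer matrix, $D_i$ is unimodular; hence $L$ and $D_i L D_i$ have the same Smith Normal Form and therefore the same invariant factors.

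For switching at a set $W$ of vertices I would use that switchings at distinct vertices commute (as recorded in the definition) and that each single switch is the unimodular conjugation above. Concretely, the composite operation is $L \mapsto D_W L D_W$, where $D_W$ is the diagonal matrix with $-1$ in the positions indexed by $W$ and $+1$ elsewhere; again $D_W{}^2 = I$ and $\det(D_W) = (-1)^{\#W}$ is a unit in $\zz$, so left- and right-multiplication by $D_W$ preserves the Smith Normal Form and hence the invariant factors. The same computation applies verbatim to the signed adjacency matrix, since the proposition asserts the identical row/column negation there.

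I expect no genuine obstacle: the mathematical content is entirely captured by the observation (already supplied) that a vertex switch is realized by scaling a row and the matching column by the unit $-1$, combined with the standard fact that multiplication on either side by a unimodular integer matrix leaves the Smith Normal Form unchanged. The only point worth a word of care is that we work over $\zz$ rather than over a field, so it matters that $-1$ is a genuine unit of the ring; this is what makes the elementary operation invertible over $\zz$ and thus guarantees that the invariant factors, not merely the rank, are preserved.
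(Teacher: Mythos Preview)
Your proof is correct and follows essentially the same approach as the paper: the paper simply notes that multiplying a row by $-1$ and a column by $-1$ are row and column operations that do not affect the invariant factors, and you spell this out explicitly via the unimodular conjugation $L\mapsto D_iLD_i$. The extra detail you provide (extending to a set $W$ and emphasizing that $-1$ is a unit in $\zz$) is consistent with, and a mild elaboration of, the paper's terse argument.
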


This raises the question as to what other totally odd signatures exist.  The following theorem comes from \cite{doranApplicationCubicalCohomology2017}.
\begin{theorem}
Let $A$ be a connected Adinkra with code $C$.  There is a bijection between the set of totally odd signatures on $A$ modulo vertex switchings and the code $C$.
\end{theorem}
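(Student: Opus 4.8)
The plan is to translate everything into $\gft$-linear algebra on the chain complex of the quotient cube complex and then identify the resulting quotient with its cohomology. Encode a signature $\mu$ as the $1$-cochain $s\in\gft^E$ with $s(e)=1$ exactly when $e$ is dashed, so the space of all signatures is $C_1:=\gft^E$. Let $C_0=\gft^V$ and let $C_2=\gft^F$ be spanned by the bicolor $4$-cycles $F$ (the $2$-faces), with boundary maps $\partial_2\colon C_2\to C_1$ and $\partial_1\colon C_1\to C_0$ satisfying $\partial_1\partial_2=0$; write $\delta^0=\partial_1^{T}$ and $\delta^1=\partial_2^{T}$ for the coboundaries. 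In this language the totally odd condition reads $\langle s,\partial_2 f\rangle=1$ for every $2$-face $f$, so (using that totally odd signatures exist because $C$ is doubly even) the set of totally odd signatures is a single coset $\mu_0+(\operatorname{im}\partial_2)^{\perp}=\mu_0+\ker\delta^1$ of the cocycle space $Z^1:=\ker\delta^1$.

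Next I would check that vertex switching is exactly the action of the coboundaries. Switching at a set $W$, encoded by $\tau=\chi_W\in C_0$, sends $s$ to $s+\delta^0\tau$, since $(\delta^0\tau)(e)=\tau(u)+\tau(v)$ equals $1$ precisely for edges $e=uv$ meeting $W$ in one endpoint. Thus switching classes of totally odd signatures are the cosets of $\operatorname{im}\delta^0=B^1$ inside $\mu_0+Z^1$. Because bicolor cycles are genuine cycles, $\partial_1\partial_2=0$ gives $\delta^1\delta^0=0$, i.e. $B^1\subseteq Z^1$; fixing the base point $\mu_0$ therefore identifies the set of switching classes with $Z^1/B^1=H^1(X;\gft)$, where $X$ is the $2$-skeleton of the quotient cube complex $\gft^N/C$.

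It remains to prove $\lvert H^1(X;\gft)\rvert=\lvert C\rvert$, which I would do topologically. The $2$-skeleton $Y$ of the $N$-cube is simply connected (the square faces generate the cycle space of the cube graph, attaching cells of dimension $\ge 3$ does not change $\pi_1$, and the solid cube is contractible), and $C\cong\gft^k$ acts freely on $Y$ by coordinate translation with $Y/C=X$. Hence $Y\to X$ is a universal cover with deck group $C$, so $\pi_1(X)\cong C$ and $H_1(X;\zz)\cong C^{\mathrm{ab}}=C$. Over $\gft$ this gives $H^1(X;\gft)\cong\operatorname{Hom}(C,\gft)$, a set of cardinality $2^k=\lvert C\rvert$, yielding the bijection. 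A concrete description is available: fixing $\mu_0$, send a totally odd signature $\mu$ to the homomorphism $c\mapsto\langle\mu-\mu_0,\gamma_c\rangle$, where $\gamma_c$ is any edge-path in $\gft^N$ from $0$ to $c$ projected to a loop in $X$; this is well defined because $\mu-\mu_0\in Z^1$ evaluates to $0$ on every bicolor cycle, and it is constant on switching classes because coboundaries vanish on loops.

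The main obstacle is the cohomology computation in the last paragraph, together with the care needed around the fact that the totally odd signatures form an affine rather than linear subspace. I must verify simple-connectivity of $Y$ and freeness of the $C$-action (including degenerate small-$N$ cases), and confirm that the affine shift by $\mu_0$ does not disturb the identification $Z^1/B^1\cong H^1$. The final step $\operatorname{Hom}(C,\gft)\cong C$ is only a bijection of sets, not canonical, since the doubly even (hence self-orthogonal) code $C$ carries no preferred nondegenerate form; but the theorem asks only for a bijection, so this is enough.
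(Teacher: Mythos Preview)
Your argument is correct, and it is essentially the cubical-cohomology proof from \cite{doranApplicationCubicalCohomology2017}, which this paper merely quotes without reproving. The one point worth making explicit is the freeness of the $C$-action on the $2$-skeleton $Y$: a nonzero codeword $c$ could only stabilize a $2$-face $\{v,v+\evec_i,v+\evec_j,v+\evec_i+\evec_j\}$ if $c\in\{\evec_i,\evec_j,\evec_i+\evec_j\}$, all of weight at most $2$, whereas doubly even forces $\wt(c)\ge 4$; this handles the verification you flagged. With that in hand, the covering argument and the identification $H^1(X;\gft)\cong\operatorname{Hom}(C,\gft)$ go through exactly as you outline.
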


In addition to vertex switchings, we could permute the vertices and edges of the graph in such a way that the Adinkra remains the same, except for the signature:
\begin{definition}
Given an Adinkra, a \emph{color-preserving graph automorphism} $\phi$ of the Adinkra is a pair $(\phi_V,\phi_E)$ where $\phi_V$ is a permutation of the vertex set and $\phi_E$ is a permutation of the edge set of the graph, such that if $e$ is an edge connecting the vertices $v$ and $w$, then $\phi_E(e)$ is an edge connecting the vertices $\phi(v)$ and $\phi(w)$, and $\phi_E(e)$ has the same color as $e$.

Given a color-preserving graph automorphism $\phi$, and a totally odd signature $\mu$ on the Adinkra, we define the \emph{pullback} $\phi^{-1}(\mu)$ to be the signature so that $\phi^{-1}(\mu)(e)=\mu(\phi_E(e))$.
\end{definition}

\begin{proposition}
Let $\phi$ be a color-preserving graph automorphism.  We write $\phi_V$ as a matrix: the $(i,j)$ entry of $P_\phi$ is $1$ if $\phi_V$ applied to the $j$th vertex is the $i$th vertex; and $0$ otherwise.  Then $P_\phi$ is a permutation matrix and the signed adjacency matrix of the Adinkra with the pullback signature is
\[A_\phi=P_\phi{}^{-1} A P_\phi.\]
Likewise, the signed Laplacian of the Adinkra with the pullback signature is
\[L_\phi=P_\phi{}^{-1} L P_\phi.\]
\end{proposition}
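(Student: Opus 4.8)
The plan is to reduce everything to a direct entrywise computation, exploiting that conjugation by a permutation matrix simply relabels the entries of a matrix according to the underlying permutation. First I would record that $P_\phi$ is indeed a permutation matrix: introducing the permutation $\pi$ of $\{1,\dots,\#V\}$ defined by $\phi_V(v_j)=v_{\pi(j)}$, the matrix $P_\phi$ has $(i,j)$ entry equal to $1$ exactly when $i=\pi(j)$, so each column contains a single $1$ and, since $\phi_V$ is a bijection, so does each row. In particular $P_\phi^{-1}=P_\phi^{T}$, with $(P_\phi^{-1})_{ik}=1$ if and only if $k=\pi(i)$.

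The central step is to evaluate the conjugate entrywise. Expanding the matrix products and using that each of the two permutation factors collapses its sum to a single term, I would obtain
\[(P_\phi^{-1} A P_\phi)_{ij}=A_{\pi(i),\pi(j)}.\]
It then remains to identify $A_{\pi(i),\pi(j)}$ with the $(i,j)$ entry of $A_\phi$, the signed adjacency matrix for the pullback signature $\phi^{-1}(\mu)$. Here I would invoke that $\phi$ is a color-preserving automorphism: since the graph is simple, there is an edge $e$ between $v_i$ and $v_j$ if and only if $\phi_E(e)$ is an edge between $\phi_V(v_i)=v_{\pi(i)}$ and $\phi_V(v_j)=v_{\pi(j)}$, and in that case $A_{\pi(i),\pi(j)}=\mu(\phi_E(e))$. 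By the definition of the pullback, $\phi^{-1}(\mu)(e)=\mu(\phi_E(e))$, so this is precisely $(A_\phi)_{ij}$; when no edge exists both sides vanish. This establishes $A_\phi=P_\phi^{-1} A P_\phi$.

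For the Laplacian I would write $L=D-A$ with $D$ the diagonal degree matrix, and likewise $L_\phi=D_\phi-A_\phi$. The pullback does not alter the underlying graph, so $D_\phi=D$; moreover $D$ is unchanged under conjugation by $P_\phi$ because a graph automorphism preserves degrees (indeed, for an Adinkra every vertex has degree $N$, so $D=NI$ is scalar and commutes with everything). Combining $P_\phi^{-1} D P_\phi=D$ with the adjacency identity gives
\[L_\phi=D-P_\phi^{-1} A P_\phi=P_\phi^{-1}(D-A)P_\phi=P_\phi^{-1} L P_\phi,\]
as desired.

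I do not anticipate a serious obstacle: the argument is essentially bookkeeping. The one place demanding care is matching conventions---confirming that the \emph{inverse} permutation matrix on the left together with the forward one on the right produces the relabeling $A_{\pi(i),\pi(j)}$ rather than its inverse, and that this relabeling lines up with the pullback convention $\phi^{-1}(\mu)(e)=\mu(\phi_E(e))$ rather than its transpose. Getting these two conventions to agree is the only subtlety worth checking explicitly.
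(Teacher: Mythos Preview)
Your proposal is correct and follows essentially the same entrywise approach as the paper: both compute that the $(i,j)$ entry of $P_\phi^{-1}AP_\phi$ is $A_{\pi(i),\pi(j)}$ and then identify this with the $(i,j)$ entry of the pullback adjacency matrix via the definition $\phi^{-1}(\mu)(e)=\mu(\phi_E(e))$. For the Laplacian the paper invokes $L=NI-A$ directly, which is your argument specialized to the $N$-regular case; your slightly more general route through $D-A$ with $D$ automorphism-invariant is fine and collapses to the same thing here.
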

\begin{proof}
We examine the $(i,j)$ entry of this matrix.  Write $v_i$ for the $i$th vertex and $v_j$ for the $j$th vertex.  Suppose we write $v_p=\phi_V(v_i)$ and $v_q=\phi_V(v_j)$.  Then the $(i,j)$ entry of $A_\phi$ is the $(p,q)$ entry of $A$.  This is $0$ if there is no edge from $v_p$ to $v_q$, and $\mu(v_p,v_q)$ otherwise.  This is the value of $\phi^{-1}(\mu)$ on the edge from $v_i$ to $v_j$.  Thus,
\[A_\phi=P_\phi{}^{-1} A P_\phi.\]

The formula for the signed Laplacian follows from the definition
\[L=NI-A.\]
\end{proof}
\begin{corollary}
Color-preserving graph automorphisms do not change the invariant factors of the signed Laplacian.
\end{corollary}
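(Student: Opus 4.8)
The plan is to derive this immediately from the preceding proposition, which expresses the signed Laplacian under pullback as a conjugate $L_\phi = P_\phi^{-1} L P_\phi$. The key observation is that conjugation by a permutation matrix is a special case of the unimodular equivalence defining the Smith Normal Form, so the invariant factors are unaffected.

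First I would recall that a permutation matrix $P_\phi$ has integer entries and that its inverse $P_\phi^{-1} = P_\phi^T$ is again an integer matrix; in particular $\det(P_\phi) = \pm 1$, a unit in $\zz$. Then, setting $B = P_\phi^{-1}$ and $C = P_\phi$, the relation $L_\phi = B L C$ exhibits $L_\phi$ and $L$ as equivalent over $\zz$ through matrices of unit determinant, which is exactly the form of equivalence used in the Smith Normal Form (and matching the use of $B,C$ in the proof of Theorem~\ref{thm:inv_L}).

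Next I would invoke the standard fact that the Smith Normal Form, and hence the tuple of invariant factors, is an invariant of an integer matrix up to left and right multiplication by matrices with unit determinant. Since $B$ and $C$ are such matrices, $L$ and $L_\phi$ share a Smith Normal Form and therefore have identical invariant factors, which is the claim.

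I do not anticipate a genuine obstacle: the substance is carried entirely by the preceding proposition, and the remaining step is the elementary invariance of invariant factors under unimodular equivalence. The only point worth stating explicitly is that a permutation matrix is unimodular over $\zz$, so that the conjugation really lands among the operations preserving invariant factors, rather than merely inducing a similarity over $\re$ (which would preserve eigenvalues but not the integral cokernel structure).
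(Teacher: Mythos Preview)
Your proposal is correct and matches the paper's approach: the paper states this corollary immediately after the proposition $L_\phi = P_\phi^{-1} L P_\phi$ without further argument, leaving implicit exactly the reasoning you supply, namely that permutation matrices are unimodular over $\zz$ and hence conjugation by them preserves the Smith Normal Form.
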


Again from \cite{doranApplicationCubicalCohomology2017},
\begin{theorem}
Let $A$ be a connected Adinkra with code $C$.  As before, we write $\ones$ for the string $1\cdots 1$ of length $N$.

If $\ones$ is not a codeword in $C$, then every totally odd signature can be obtained from any other by a composition of a color-preserving graph automorphism and a number of vertex switchings.

If $\ones$ is a codeword in $C$, then the action on the set of totally odd signatures by compositions of color-preserving graph automorphisms and vertex switchings has two orbits.
\end{theorem}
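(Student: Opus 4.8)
The plan is to reduce the statement to a computation in the first cohomology of the Adinkra, using the prior theorem that totally odd signatures modulo vertex switching are in bijection with $C$. First I would pin down the relevant automorphisms. Since the connected Adinkra $A$ is the quotient $\gft^{N}/C$ of the $N$-cube and an Adinkra automorphism preserves the boson/fermion bipartition, I would show that the color-preserving automorphisms in question are exactly the translations $\tau_a\colon[x]\mapsto[x+a]$ with $a$ of even weight, i.e.\ $a\in\ones^{\perp}/C$. Indeed, any color-preserving automorphism of $Q_N$ satisfies $\phi(x+\evec_i)=\phi(x)+\evec_i$ and is hence a translation; lifting to the universal cover gives the same on $A$, and preservation of the bipartition forces $\wt(a)$ even because $\wt(x+a)\equiv\wt(x)+\wt(a)\pmod 2$.

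Next I would set up the torsor structure. Writing a signature as a $\gft$-valued $1$-cochain $s$ on the $2$-complex $A$ obtained by filling in the bicolor squares, the totally odd condition is the affine equation $\delta s=\omega$ (where $\omega$ is $1$ on every square) and vertex switching is the action of $\mathrm{im}\,\delta^{0}$; hence signatures modulo switching form a torsor over $H^{1}(A;\gft)$. Since $Q_N$ is simply connected and $C$ acts freely, $\pi_1(A)\cong C$ and $H^{1}(A;\gft)\cong\mathrm{Hom}(C,\gft)$, which recovers the stated bijection. Each $\tau_a$ induces the identity on $\pi_1(A)\cong C$ (translations commute with the deck group), so it acts trivially on $H^{1}$ and therefore acts on the torsor by translation by an element $\theta(a)\in\mathrm{Hom}(C,\gft)$, with $\theta$ a homomorphism. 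The orbits are then the cosets of $\mathrm{im}\,\theta$, and since $\mathrm{im}\,\theta$ is the annihilator of the right radical $R=\{c\in C:\theta(a)(c)=0\ \text{for all }a\in\ones^{\perp}\}$, their number is $2^{\dim R}$.

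The key step is to identify $\theta$ explicitly as
\[
\theta(a)(c)=\langle a,c\rangle \pmod 2 .
\]
I would prove this by integrating the cocycle $\tau_a^{*}s-s$ over the loop $\gamma_c$ representing a codeword $c\in C=\pi_1(A)$. Fixing the explicit totally odd signature $s_0(x,i)=\sum_{j<i}x_j$ on $Q_N$ and writing the lift of $s$ as $s_0+\delta^{0}g$, the loop $\gamma_c$ lifts to an \emph{open} path from $x_0$ to $x_0+c$, so the coboundary term $\delta^{0}g$ does not integrate to zero but contributes boundary values of $g$. The requirement that $s_0+\delta^{0}g$ descend to $A$ (possible exactly because $C$ is doubly even) forces the relation $g(x+c)=g(x)+g(c)+\sum_{j<i}x_i c_j$ for $c\in C$; substituting it collapses every occurrence of $g$ and leaves $\theta(a)(c)=\sum_{j<i}(a_jc_i+a_ic_j)=\wt(a)\wt(c)+\langle a,c\rangle$, which reduces to $\langle a,c\rangle$ because every codeword has even weight. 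This evaluation is the main obstacle: one must correctly carry the boundary contribution of the switching cochain along the open lift of $\gamma_c$ and use the descent relation for $g$ to cancel it, since a naive computation that discards this term gives a homomorphism with trivial radical and would wrongly predict a single orbit even when $\ones\in C$.

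Finally I would conclude with a short linear-algebra computation. With $\theta(a)(c)=\langle a,c\rangle$ and $a$ ranging over the even-weight vectors $\ones^{\perp}$,
\[
R=\{c\in C:\langle a,c\rangle=0\ \text{for all }a\in\ones^{\perp}\}=C\cap(\ones^{\perp})^{\perp}=C\cap\langle\ones\rangle .
\]
Thus $R=0$ when $\ones\notin C$, giving one orbit, while $R=\langle\ones\rangle$ is one-dimensional when $\ones\in C$, giving two orbits. In the latter case the invariant separating the orbits is the holonomy of the signature around the loop $\gamma_{\ones}$, which is invariant under switching and under even-weight translations precisely because $\langle a,\ones\rangle=0$ for every even-weight $a$.
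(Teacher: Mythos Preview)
The paper does not supply its own proof of this theorem: it is quoted verbatim from \cite{doranApplicationCubicalCohomology2017} and left unproved here. So there is no in-paper argument to compare against; the relevant comparison is to the cohomological framework of the cited source, and your proposal is very much in that spirit.

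Your plan is sound and the main computation checks out. Writing $\tilde s=s_0+\delta^0 g$ on $Q_N$ with $s_0(x,i)=\sum_{j<i}x_j$, one finds that the $C$-invariance of $\tilde s$ forces $g(x+c)-g(x)=\sum_{j<i}c_j x_i+K(c)$, and then the holonomy of $\tau_a^{*}s-s$ around $\gamma_c$ is
\[
\Bigl[h(x_0+c)-h(x_0)\Bigr]+\Bigl[G_c(x_0+a)-G_c(x_0)\Bigr]=\sum_{j<i}a_jc_i+\sum_{j<i}c_ja_i=\wt(a)\wt(c)+\langle a,c\rangle,
\]
which reduces to $\langle a,c\rangle$ since $\wt(c)\equiv 0$ for $c\in C$. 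The orbit count $2^{\dim R}$ with $R=C\cap\langle\ones\rangle$ then follows exactly as you say.

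One point deserves a sharper justification. Your argument hinges on restricting the translations $\tau_a$ to even-weight $a\in\ones^{\perp}$; without that restriction the map $a\mapsto\langle a,\cdot\rangle$ surjects onto $\mathrm{Hom}(C,\gft)$ and would give a single orbit regardless of whether $\ones\in C$. You justify the restriction by saying ``an Adinkra automorphism preserves the boson/fermion bipartition,'' but note that the paper's stated definition of a color-preserving graph automorphism mentions only vertices, edges, and colors. The intended reading (and the one used in the cited source) is that the bipartition is part of the Adinkra structure, so automorphisms must respect it; you should say this explicitly and point to the fact that otherwise the theorem is false already for the $d_4$ Adinkra, where the odd-weight translation $\tau_{1000}$ would merge the two orbits.
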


So when $\ones$ is not a codeword in $C$, all totally odd signatures give rise to the same invariant factors.  By Corollary~\ref{cor:notallone}, we already know the invariant factors will be $(1^{\#V/2},(N(N-1))^{\#V/2})$.

When $\ones$ is a codeword in $C$, there are two totally odd signatures to check.  The cases checked so far ($d_4$, $h_8$, $d_4\oplus d_4$, $d_8$, $e_8$) indicate that the two possibilities have signed Laplacians with the same invariant factors.

\section{Other Conjectures and Further Directions}

Besides the independence of dashings discussed above, the most obvious and central question is to determine the $2$-rank of the Laplacian of an Adinkra and the $2$-Sylow subgroups of its critical group.
While we do not have a precise conjecture on the $2$-Sylow subgroups, we state the converse of Corollary~\ref{cor:notallone} (which is a special case of \cite[Conjecture 6.1]{GKM_Cayley}) as our next conjecture.

\begin{conjecture}
If $\ones$ is a codeword of a doubly even code associated to an Adinkra, then the Adinkra has non-trivial invariant factors (resp. the $2$-rank is less than $\#V/2$).
\end{conjecture}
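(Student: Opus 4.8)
The plan is to pass to $\gft$ and reformulate the claim as the nonvanishing of a homology group. Since $\ones \in C$ and $C$ is doubly even, $N = \wt(\ones)$ is divisible by $4$; in particular $N$ is even, so over $\gft$ the diagonal of $\overline{L}$ vanishes and the signs are immaterial, giving $\overline{L} = \overline{A}$, the mod-$2$ adjacency matrix of the underlying quotient graph. By Corollary~\ref{cor:lasteven} and Theorem~\ref{thm:main_invf} the $2$-rank of $L$ is at most $\#V/2$, and it equals the multiplicity of $1$ among the invariant factors; thus the statement is equivalent to the strict inequality $\mathrm{rank}_{\gft}(\overline{A}) < \#V/2$. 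First I would set this up as the quantity to bound.

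Next I would use the Cayley-graph description. Writing the connected Adinkra as a quotient of the $N$-cube by $C = \ker M$, the underlying graph is $\Cayley(\gft^{\,r}, M)$ with $r = N-k$, and $\overline{A}$ is left multiplication $\text{mult}_s$ by $s = \sum_{i=1}^N x^{\mathbf{g}_i}$ on the group algebra $R = \gft[\gft^{\,r}]$, where $\mathbf{g}_i$ are the columns of $M$. Over $\gft$ one has $R \cong \gft[z_1,\dots,z_r]/(z_1^2,\dots,z_r^2) \cong \textstyle\bigwedge \gft^{\,r}$, with $z_j = x_j+1$ and augmentation ideal $\mathfrak{m} = (z_1,\dots,z_r)$. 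Identity~\eqref{eqn:a2} reduces mod $2$ to $\overline{A}^2 = 0$, i.e.\ $s^2 = 0$; hence $sR \subseteq \ker(\text{mult}_s)$ and $\mathrm{rank}(\text{mult}_s) \le \#V/2$, recovering the known bound. Expanding $s$ in the $\mathfrak{m}$-adic filtration, its degree-$0$ part is $N \equiv 0$ and its degree-$1$ part is $\sum_j\bigl(\sum_i (\mathbf{g}_i)_j\bigr) z_j$, which vanishes exactly when $\sum_i \mathbf{g}_i = 0$, i.e.\ when $M\ones = 0$, i.e.\ when $\ones \in C$. Thus the hypothesis is equivalent to $s \in \mathfrak{m}^2$, and the conjecture becomes the purely algebraic assertion that for $s \in \mathfrak{m}^2$ with $s^2 = 0$ the two-periodic homology $H(s) = \ker(\text{mult}_s)/sR$ is nonzero.

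Two sub-cases are within reach and I would record them as evidence and as the tools one would build on. The \emph{converse} falls out cleanly: if $\ones \notin C$ then $s$ has a nonzero linear leading term $\xi \in \mathfrak{m}/\mathfrak{m}^2$, and since wedging with a nonzero $1$-form on $\bigwedge\gft^{\,r}$ is the acyclic Koszul differential, $\mathrm{rank}(\text{mult}_\xi) = \#V/2$; as passing to the associated graded can only decrease rank, $\mathrm{rank}(\text{mult}_s) \ge \mathrm{rank}(\text{mult}_\xi) = \#V/2$, forcing equality and reproving Corollary~\ref{cor:notallone} without appeal to \cite{GKM_Cayley}. When $s = s_2$ is \emph{homogeneous} of degree $2$, it is an alternating form of some rank $2\rho \le r$; choosing a symplectic splitting $\gft^{\,r} = \bigoplus_{i=1}^{\rho}\langle a_i,b_i\rangle \oplus W$ with $s_2 = \sum_i a_i\wedge b_i$ and applying the K\"unneth formula to $\bigwedge\gft^{\,r} = \bigl(\bigotimes_i \bigwedge\langle a_i,b_i\rangle\bigr)\otimes \bigwedge W$ gives $\dim_{\gft} H(s_2) = 2^{\,r-\rho} > 0$. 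Finally, the elementary estimate $\mathrm{rank}(\text{mult}_s) \le \sum_{d}\min\bigl(\binom{r}{d}, \sum_{e\ge d+2}\binom{r}{e}\bigr)$, valid because $s\in\mathfrak m^2$ raises degree by at least $2$, stays below $2^{r-1}$ for $r \le 6$ and hence settles every Adinkra with $\#V \le 64$ (covering all cases in the table of Section~\ref{sec:table} with $\ones\in C$ except $h_8$, which is confirmed by direct computation).

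The main obstacle is the general \emph{inhomogeneous} element $s = s_2 + s_3 + \cdots \in \mathfrak m^2$. The degree filtration yields a spectral sequence with $E_1 = H(s_2) \ne 0$ converging to $H(s)$, but the inequalities run the wrong way: associated-graded and leading-term bounds only \emph{lower}-bound $\mathrm{rank}(\text{mult}_s)$, while upper-semicontinuity of homology gives $\dim H(s) \le \dim H(s_2)$, so a priori the higher terms $s_3, s_4,\dots$ could cancel all of $H(s_2)$. Gorenstein self-duality endows $H(s)$ with a nondegenerate symmetric form, but over $\gft$ this imposes no parity constraint forcing $H(s)\ne 0$, and there is no evident persistent cycle---even the socle class $z_1\cdots z_r$ can lie in $sR$. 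Overcoming this is exactly the content of the equality assertion of \cite[Conjecture~6.1]{GKM_Cayley}; I expect that the doubly-even structure of $\ker M$ (not merely $\ones \in C$) must enter to control the spectral-sequence differentials or to produce a surviving homology class, and a proof valid for all $r$ remains open.
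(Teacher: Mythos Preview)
The statement you are addressing is labeled a \emph{Conjecture} in the paper; the authors do not prove it and explicitly identify it as (a special case of) the open equality assertion in \cite[Conjecture~6.1]{GKM_Cayley}. So there is no proof in the paper to compare your proposal against, and you yourself correctly stop short of claiming one: your final paragraph names the obstruction precisely and declares the general case open.

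That said, your write-up contributes more than the paper does on this point. The reformulation of the problem as the nonvanishing of $H(s)=\ker(\mathrm{mult}_s)/sR$ for $s\in\mathfrak m^2$ in the exterior algebra $\bigwedge\gft^{\,r}$ is clean and correct (the computation $s\in\mathfrak m^2 \Leftrightarrow M\ones=0 \Leftrightarrow \ones\in C$ checks out), and it isolates exactly what must be shown. Your converse argument---that a nonzero linear leading term forces $\mathrm{rank}(\mathrm{mult}_s)=2^{r-1}$ via the Koszul complex---gives an independent proof of Corollary~\ref{cor:notallone} that does not invoke \cite{GKM_Cayley}, which the paper does not have. The homogeneous degree-$2$ case via a symplectic basis and K\"unneth is correct and suggestive, and the filtration bound $\sum_d\min\bigl(\binom{r}{d},\sum_{e\ge d+2}\binom{r}{e}\bigr)$ does indeed stay below $2^{r-1}$ for $r\le 6$ (and first fails at $r=7$), so your claimed range of unconditional verification is accurate.

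The genuine gap you identify is real and is the whole difficulty: for inhomogeneous $s=s_2+s_3+\cdots\in\mathfrak m^2$, the spectral-sequence inequality goes the wrong way, Gorenstein duality over $\gft$ gives no parity obstruction, and no persistent cycle is evident. Nothing in the paper gets past this either. Your suspicion that the doubly-even hypothesis (not merely $\ones\in C$) must be used is reasonable, but be aware that the paper frames the conjecture as a special case of \cite[Conjecture~6.1]{GKM_Cayley}, which is stated for arbitrary (not doubly-even) $\ker M$ containing $\ones$; so if the doubly-even structure is truly needed, that would separate the Adinkra conjecture from the more general one.
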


A natural question from the interaction of Smith Normal Forms over different rings in (\ref{eq:CD}) is whether the Smith Normal Forms of $L,\tilde{L}$ over $\zz,\gf_p[x]$ can be deduced from a ``universal'' Smith Normal Form of $\hat{L}$ over $\zz[x]$.
The existence of such a Smith Normal Form is not {\em a priori} as $\zz[x]$ is not a PID.
Indeed, it is possible to prove that if the Adinkra has non-trivial invariant factors, then such universal Smith Normal Form does not exist.
We conjecture the converse is true.

\begin{conjecture} The Smith Normal Form of $\hat{L}$ exists (over $\zz[x]$) whenever the Smith Normal Form of $L$ is $(1^{\#V/2},(N^2-N)^{\#V/2})$, in which case it must take the form $(1^{\#V/2},(2(N-1)x+(N-1)(N-2))^{\#V/2})$.
\end{conjecture}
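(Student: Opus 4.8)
The plan is to reduce everything to the matrix $\hat{X}$, and then to a single integer matrix, separating the (relatively soft) determination of the \emph{form} from the (hard) question of \emph{existence}. First I would record that, by Theorem~\ref{thm:inv_L}, the hypothesis $\mathrm{SNF}(L)=(1^{\#V/2},(N^2-N)^{\#V/2})$ is equivalent to $\mathrm{SNF}(X)=(1^{\#V/4},N^{\#V/4})$ over $\zz$. Next I would run the block reduction of Theorem~\ref{thm:inv_L} over $\zz[x]$ on $\hat{L}=\left[\begin{smallmatrix}\sigma I&\hat{X}\\ \hat{X}^{T}&\sigma I\end{smallmatrix}\right]$: if $\hat{X}$ admits a Smith normal form $B\hat{X}C=\mathrm{diag}(1^{\#V/4},\rho^{\#V/4})$ over $\zz[x]$, then, using $\hat{X}^{T}\hat{X}=\rho I$ from Theorem~\ref{thm:chroXdet} to compute the transposed diagonal as $\rho/x_i$, $\hat{L}$ becomes equivalent to a direct sum of $2\times 2$ blocks $\left[\begin{smallmatrix}\sigma&1\\ \rho&\sigma\end{smallmatrix}\right]$ and $\left[\begin{smallmatrix}\sigma&\rho\\ 1&\sigma\end{smallmatrix}\right]$. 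Each such block contains a unit entry, so it reduces by elementary operations to $\mathrm{diag}(1,\sigma^2-\rho)$; since $\sigma^2-\rho=2(N-1)x+(N-1)(N-2)=:f(x)$, this yields $\mathrm{SNF}(\hat{L})=(1^{\#V/2},f^{\#V/2})$. Thus the whole problem is equivalent to proving $\mathrm{SNF}(\hat{X})=(1^{\#V/4},\rho^{\#V/4})$ over $\zz[x]$.

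The second step strips $\hat{X}$ down to a characteristic matrix. Decomposing by color, $\hat{X}=\sum_i x_i X_{(i)}=X+(x-1)X_{(1)}$, where $X_{(i)}$ is the color-$i$ block. By (\ref{eqn:aialg1}) each $X_{(i)}$ is a signed permutation matrix, hence $X_{(1)}\in GL_{\#V/2}(\zz)$, and so $\mathrm{SNF}(\hat{X})=\mathrm{SNF}(X_{(1)}^{T}\hat{X})=\mathrm{SNF}(xI-M)$, where $M:=I-X_{(1)}^{T}X=-\sum_{i\ge 2}Z_i$ with $Z_i:=X_{(1)}^{T}X_{(i)}$. Relations (\ref{eqn:aialg1})--(\ref{eqn:aialg2}) translate into the Clifford relations $Z_i^{2}=-I$ and $Z_iZ_j=-Z_jZ_i$ for $i\neq j$, whence $M^{2}=\sum_{i\ge2}Z_i^{2}+\sum_{i\neq j}Z_iZ_j=-(N-1)I$. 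For $N\ge 2$ the polynomial $\rho=x^{2}+(N-1)$ has no rational root and is therefore irreducible over $\mathbb{Q}$.

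With this I can pin down the form, assuming existence. Over $\mathbb{Q}[x]$, $M$ has minimal polynomial $\rho$ (as $\rho$ is irreducible and $M\neq 0$) and characteristic polynomial $\rho^{\#V/4}$, so the rational canonical form gives $\mathrm{SNF}(xI-M)=(1^{\#V/4},\rho^{\#V/4})$ over $\mathbb{Q}[x]$. Consequently the $k$-th determinantal divisor $g_k$ of $\hat{X}$ over $\zz[x]$ equals $c_k\,\rho^{\max(0,\,k-\#V/4)}$ for some integer content $c_k$, by Gauss's lemma applied to the primitive polynomial $\rho$. To see $c_k=1$ I would specialize at $x=1$: every $k\times k$ minor of $\hat{X}$ restricts to the corresponding minor of $X$, so $g_k(\hat{X})|_{x=1}$ divides $g_k(X)$, and the hypothesis $\mathrm{SNF}(X)=(1^{\#V/4},N^{\#V/4})$ gives $g_k(X)=N^{\max(0,\,k-\#V/4)}$ (with $\rho(1)=N$). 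This forces $c_k=1$, so the determinantal divisors are exactly $\rho^{\max(0,\,k-\#V/4)}$; by the Elementary Divisor Theorem (Theorem~\ref{thm:edt}), \emph{if} a Smith normal form exists it must be $(1^{\#V/4},\rho^{\#V/4})$, and hence $\mathrm{SNF}(\hat{L})=(1^{\#V/2},f^{\#V/2})$ as claimed.

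The remaining, and genuinely hard, step is existence. Writing $S:=\zz[x]/(\rho)$ and $P:=\mathrm{coker}(xI-M)\cong\zz^{\#V/2}$ with $x$ acting as $M$, a Smith normal form $(1^{\#V/4},\rho^{\#V/4})$ exists over $\zz[x]$ \emph{iff} $P$ is free of rank $\#V/4$ over $S$. Here is exactly where the difficulty lies: $S$ is an order in the imaginary quadratic field $\mathbb{Q}(\sqrt{1-N})$, which is in general neither a principal ideal domain nor integrally closed, so finitely generated torsion-free $S$-modules need not be free. I can verify the two standard necessary conditions---$P$ is $S$-torsion-free (because $P\otimes_\zz\mathbb{Q}$ is a vector space over the field $S\otimes_\zz\mathbb{Q}$), and $P/(x-1)P\cong(\zz/N)^{\#V/4}\cong\bigl(S/(x-1)S\bigr)^{\#V/4}$ by the hypothesis---but these do not suffice to conclude freeness globally. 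The most promising route I see is to use the full Clifford action: the $Z_i$ make $P$ a module over the integral Clifford algebra on $Z_2,\dots,Z_N$, and one would try to produce an explicit $S$-basis of $P$ from a spinor decomposition, or else verify freeness prime-by-prime and glue. When $\zz[\sqrt{1-N}]$ happens to be a principal ideal domain the argument closes at once; handling arbitrary $N$ is the crux, and the reason the statement is only conjectural.
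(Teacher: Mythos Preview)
The statement you are attempting is a \emph{conjecture} in the paper; the authors do not give a proof. They only remark that the converse (non-trivial invariant factors of $L$ imply non-existence of a Smith normal form of $\hat{L}$ over $\zz[x]$) can be shown, and leave the direction you are addressing open. So there is no ``paper's own proof'' to compare against.

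That said, your proposal is a genuine advance over what the paper records. Your reduction of the problem for $\hat{L}$ to the analogous problem for $\hat{X}$ via the block argument of Theorem~\ref{thm:inv_L} is correct, and your further reduction $\hat{X}\sim xI-M$ with $M^2=-(N-1)I$ via the signed permutation $X_{(1)}$ and the Clifford relations is clean and correct. Your determinantal-divisor computation then rigorously establishes the ``must take the form'' clause of the conjecture: over the UFD $\zz[x]$, the determinantal divisors of $\hat{X}$ are exactly $\rho^{\max(0,k-\#V/4)}$ under the hypothesis on $X$, so by Theorem~\ref{thm:edt} any Smith normal form of $\hat{X}$ is $(1^{\#V/4},\rho^{\#V/4})$, and hence any Smith normal form of $\hat{L}$ is $(1^{\#V/2},f^{\#V/2})$. (One small point: to deduce the form of $\mathrm{SNF}(\hat{L})$ purely from its existence, without first assuming $\mathrm{SNF}(\hat{X})$ exists, you should either compute the determinantal divisors of $\hat{L}$ directly, or observe that your block reduction with $E,F$ invertible over $\zz[x]$ does not require $\hat{X}$ to be in Smith form---it only requires $\hat{X}\hat{X}^{T}=\rho I$---and already brings $\hat{L}$ into a form whose determinantal divisors are evident.)

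Where you correctly stop is exactly where the conjecture is open: existence. Your reformulation---$\mathrm{SNF}(\hat{X})$ exists over $\zz[x]$ iff $\zz^{\#V/2}$, viewed as a module over $S=\zz[x]/(x^2+N-1)$ via $M$, is free---is accurate, and your observation that $S$ is an order in $\mathbb{Q}(\sqrt{1-N})$ (generally not a PID, not even maximal) pinpoints the obstruction. The Clifford-module route you sketch is reasonable but, as you say, does not yet close the gap. In short: you have proved the conditional ``form'' assertion, sharpened the ``existence'' assertion to a concrete module-freeness statement, and honestly identified that the latter remains open---which is the state of affairs in the paper as well.
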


More generally, we believe the algebraic techniques developed in Section~\ref{sec:oddprime} could be useful in other settings.
It is often the case that a good upper bound of the $p$-rank of an integer matrix can be obtained by working over $\zz$.
For example, if a (diagonalizable) matrix only has a few distinct eigenvalues that are small, then Theorem~\ref{thm:Lorenzini} gives a non-trivial upper bound of the $p$-rank; another example in the literature is strongly regular graphs, see \cite{BV_RankSRG}.
In contrast, it seems that there are less techniques available for giving good lower bounds of $p$-rank; our ``lift to $\zz[x]$'' method provides a general approach for obtaining lower bounds, as the following proposition shows:

\begin{proposition} \label{prop:uni_bound}
Let $M\in\zz^{n\times n}$ and $p$ be a prime.
Then the $p$-corank of $M$ equals the minimum of the degree of $x-1$ in $\overline{\det\hat{M}}\in\gf_p[x]$ over all non-singular $\hat{M}\in\zz[x]^{n\times n}$ such that $\hat{M}|_{x=1}=M$.
\end{proposition}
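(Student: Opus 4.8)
The plan is to prove the two inequalities separately, writing $c$ for the $p$-corank of $M$, i.e. the corank of $\overline{M}$ over $\gf_p$, which by Lemma~\ref{lem:XvsTildeX} equals the number of invariant factors of $M$ divisible by $p$ (with $0$ counted as divisible by $p$). Throughout I read ``the degree of $x-1$ in a polynomial $f\in\gf_p[x]$'' as the multiplicity $v_{x-1}(f)$ of the factor $x-1$, with the convention $v_{x-1}(0)=\infty$. Note first that admissible non-singular lifts exist, so the minimum is taken over a non-empty set.

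For the lower bound, that every admissible lift satisfies $v_{x-1}\bigl(\overline{\det\hat{M}}\bigr)\ge c$, I would reduce $\hat{M}$ modulo $p$ to a matrix $\tilde{M}$ over $\gf_p[x]$. If $\overline{\det\hat{M}}=0$ the bound is trivial, so assume $\det\tilde{M}=\overline{\det\hat{M}}\neq 0$; then $\tilde{M}$ is non-singular over the PID $\gf_p[x]$ and has a Smith Normal Form $\mathrm{diag}(f_1,\ldots,f_n)$ with $f_1\mid\cdots\mid f_n$ all nonzero. Specializing $x\mapsto 1$ carries $\tilde{M}$ to $\overline{M}$, and by Lemma~\ref{lem:ringSNF} it carries this Smith form to a Smith Normal Form of $\overline{M}$; hence the number of indices with $f_i(1)=0$, equivalently with $(x-1)\mid f_i$, equals the corank of $\overline{M}$, namely $c$. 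Since each such factor contributes at least $1$, we get $v_{x-1}(\det\tilde{M})=\sum_i v_{x-1}(f_i)\ge c$, as required.

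For the upper bound I would exhibit a lift attaining valuation exactly $c$. Take a Smith Normal Form $D=UMV$ of $M$ \emph{over $\zz$} with $U,V\in GL_n(\zz)$ and $D=\mathrm{diag}(d_1,\ldots,d_n)$, $d_1\mid\cdots\mid d_n$; by the up-set property of divisibility, the entries divisible by $p$ (including any zeros) are precisely the last $c$ of them, so $d_i\neq 0$ and $p\nmid d_i$ for $i\le n-c$. Define $\hat{D}\in\zz[x]^{n\times n}$ by leaving the first $n-c$ diagonal entries unchanged and replacing each of the last $c$ entries $d_i$ by $d_i+(x-1)$, and set $\hat{M}=U^{-1}\hat{D}V^{-1}$. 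Then $\hat{D}|_{x=1}=D$ forces $\hat{M}|_{x=1}=M$; each modified entry reduces modulo $p$ to $x-1$ (as $p\mid d_i$) while each unmodified entry reduces to a nonzero constant of $\gf_p$, so $\det\hat{D}$ is a nonzero element of the domain $\zz[x]$, making $\hat{M}$ non-singular, and $\overline{\det\hat{M}}=\pm\,\overline{\det\hat{D}}$ is a unit times $(x-1)^{c}$. Hence $v_{x-1}\bigl(\overline{\det\hat{M}}\bigr)=c$, and combining the two inequalities yields equality with the minimum attained.

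The one point requiring care, and the closest thing to an obstacle, is keeping the two coefficient rings straight: $\zz[x]$ is not a PID, so one cannot invoke a Smith Normal Form of $\hat{M}$ directly there, which is exactly why the statement is phrased through $\det$. All the structural leverage comes from passing to the PID $\gf_p[x]$ for the lower bound and from using the honest integer Smith form of $M$ for the construction, with Lemma~\ref{lem:ringSNF} mediating the specialization $x\mapsto 1$ between the two. Once this bookkeeping is in place, both halves are short.
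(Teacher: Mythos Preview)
Your proof is correct and follows essentially the same route as the paper's: both use the integer Smith Normal Form of $M$ to build a lift in which the last $c$ diagonal entries are replaced by $d_i+(x-1)=x-(1-d_i)$, and both handle the lower bound by reducing to $\gf_p[x]$ and invoking the compatibility of Smith Normal Form with the specialization $x\mapsto 1$ (the paper simply cites the argument of Section~\ref{subsec:oddprime}, which you have written out in detail).
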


\begin{proof}
Following Section~\ref{subsec:oddprime}, the minimum is at least the $p$-corank, so it remains to show that equality is attained. 
Let ${\rm diag}(d_1,\ldots,d_n)=BMC$ be the Smith Normal Form of $M$, where $p|d_i$ for $i>k$.
Set $\hat{M}=B^{-1}{\rm diag}(d_1,\ldots,d_k,x-(1-d_{k+1}),\ldots,x-(1-d_n))C^{-1}$ (since $\det(B),\det(C)=\pm 1$, $B^{-1}, C^{-1}$ are integral matrices).
Then $\hat{M}|_{x=1}=M$, and $\det(\hat{M})=\pm\prod_{i\leq k}d_i\prod_{j>k} (x-(1-d_j))$, whose reduction over $\gf_p[x]$ equals  $(x-1)^{n-k}$ up to non-zero scalar multiple.
\end{proof}

Finding lifts of special matrices to $\zz[x]$ that give good lower bounds (or exact values) of their $p$-rank, without computing the Smith Normal Form directly, seems to be an interesting problem.  In our case, we used the colors of the edges to correspond to the indeterminates $x_i$.  Likewise, one can ask whether other natural lifts of special matrices and their determinants (for instance, natural $q$-analogs of combinatorial matrices) give exact $p$-ranks for different $p$'s in the sense of Proposition~\ref{prop:uni_bound}.

Finally, another interesting direction is to explore is the connection between the critical groups of Adinkras and representation theory.
The critical groups of ``algebraic'' (ordinary) graphs such as Cayley graphs of abelian groups \cite{DJ_Cayley,CSX_Paley,TS_Polar}, or more generally integer matrices coming from representation theory such as McKay-Cartan matrices \cite{Gaetz, BKR_ChipFiring,GHR_Hopf}, can often be studied and interpreted using representation theoretical techniques.
In view of the connections with supersymmetry algebras and Clifford algebras, it would be illuminating to understand the results in this paper using the representation theory of these objects.

\section{Acknowledgments}
KI was partially supported by the endowment of the Ford Foundation Professorship of Physics at Brown University, and by the U.S. National Science Foundation grant PHY-1315155.
CHY was supported by Croucher Fellowship for Postdoctoral Research and Trond Mohn
Foundation project ``Algebraic and Topological Cycles in Complex and Tropical
Geometries'' during his affiliation to Brown University and University of Oslo, respectively.

\bibliographystyle{amsplain}
\bibliography{refs}

\end{document}